\newcommand{\GG}{\mathcal{G}}
\newcommand{\HH}{\mathcal{H}}
\newcommand{\DD}{\mathcal{D}}
\newcommand{\RR}{\mathbb{R}}
\newcommand{\spans}{\operatorname{span}}
\renewcommand{\ne}{\operatorname{ne}}
\newcommand{\CIM}{\operatorname{CIM}}
\newcommand{\CIMT}{\operatorname{CIMTree}}
\newcommand{\CGP}{\operatorname{CGP}}
\DeclareMathAlphabet{\@mymathbb}{U}{bbold}{m}{n}
\newcommand{\zero}{\@mymathbb{0}}
\newcommand{\one}{\@mymathbb{1}}
\DeclareMathOperator{\pa}{pa}
\DeclareMathOperator{\argmax}{argmax}
\DeclareMathOperator{\conv}{conv}
\def\newop#1{\expandafter\def\csname #1\endcsname{\mathop{\rm
#1}\nolimits}}
\setlist[enumerate]{leftmargin=.5in}
\setlist[itemize]{leftmargin=.5in}
\newtheorem{theorem}{Theorem}[section]
\newtheorem{proposition}[theorem]{Proposition}
\newtheorem{corollary}[theorem]{Corollary}
\newtheorem{lemma}[theorem]{Lemma}
\newtheorem{example}[theorem]{Example} 
\newtheorem{conjecture}[theorem]{Conjecture}
\newtheorem{question}[theorem]{Question}
\theoremstyle{definition}
\newtheorem{goal}[theorem]{Goal}
\theoremstyle{remark}
\newtheorem{remark}[theorem]{Remark}
\title[Rhombus criterion and the chordal graph polytope]{Rhombus criterion and\\ the chordal graph polytope}
\author{Svante Linusson
\and Petter Restadh
}
\email[Svante Linusson]{linusson@kth.se}
\email[Petter Restadh]{petterre@kth.se}
\address{Department of Mathematics\\
    KTH Royal Institute of Technology\\
    SE-100 44 Stockholm, Sweden}
\begin{document}

\begin{abstract}
The purpose of this paper is twofold. We investigate a simple necessary condition, called the rhombus criterion, for two vertices in a polytope not to form an edge and show that in many examples of $0/1$-polytopes it is also sufficient. We explain how also when this is not the case, the criterion can give a good algorithm for determining the edges of high-dimenional polytopes.

In particular we study the Chordal graph polytope, which arises in the theory of causality and is an important example of a characteristic imset polytope. We prove that, asymptotically, for almost all pairs of vertices the rhombus criterion holds. We conjecture it to hold for all pairs of vertices.
\end{abstract}

\maketitle

\section{Introduction}

Computing the edge structure of a convex polytope is oftentimes a non-trivial task and the most commonly used algorithms for doing this scales very badly with dimension \cite{EFG16, Zie95}. 
This is especially true for the characteristic imset polytopes (see \cref{sec: cim polytopes}) whose dimension grow exponentially and therefore even small examples are computationally unfeasible. 

Let $P$ be a $d$-dimensional polytope with vertices $V$. The following lemma should be obvious, but can be deduced directly from the general definitions stated in Section \ref{sec: computations}.

\begin{lemma}[Rhombus criterion]
\label{lem: square criterion}
Let $P$ be a polytope with vertex set $V$ and let $\alpha,\ \beta\in V$. 
If there exists vertices $\alpha',\ \beta'\in V\setminus\{\alpha, \beta\}$ such that $\alpha+\beta=\alpha'+\beta'$, then $\conv(\alpha, \beta)$ is not an edge of $P$. 
\end{lemma}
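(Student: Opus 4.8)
The plan is to argue by contradiction using the supporting-hyperplane characterization of faces. Suppose that $\conv(\alpha,\beta)$ is an edge of $P$. Then there is a linear functional $c$ and a scalar $t$ such that $c\cdot x\le t$ for every $x\in P$ and $P\cap\{x : c\cdot x = t\}=\conv(\alpha,\beta)$; in particular $c\cdot\alpha=c\cdot\beta=t$. This reduces the whole statement to comparing the values of a single linear functional on the four vertices.

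The crucial observation is that $\alpha'$ and $\beta'$ cannot lie on this edge. Indeed, the only vertices of the segment $\conv(\alpha,\beta)$ are $\alpha$ and $\beta$ themselves, while $\alpha',\beta'$ are vertices of $P$ distinct from both by hypothesis; a vertex of $P$ lying on the supporting hyperplane would be a vertex of the face $\conv(\alpha,\beta)$, hence equal to $\alpha$ or $\beta$. Therefore neither $\alpha'$ nor $\beta'$ lies on the hyperplane, so $c\cdot\alpha'<t$ and $c\cdot\beta'<t$, giving $c\cdot(\alpha'+\beta')<2t$. On the other hand $c\cdot(\alpha+\beta)=2t$, and applying $c$ to the hypothesis $\alpha+\beta=\alpha'+\beta'$ yields $c\cdot(\alpha+\beta)=c\cdot(\alpha'+\beta')$, whence $2t<2t$, a contradiction. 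Thus $\conv(\alpha,\beta)$ is not an edge.

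I expect no genuine obstacle here; the one point that needs care is precisely the claim that $\alpha'$ and $\beta'$ sit \emph{strictly} below the supporting hyperplane rather than merely on the edge, which is exactly where the assumption $\alpha',\beta'\in V\setminus\{\alpha,\beta\}$ is used. An equivalent and perhaps more transparent phrasing recasts everything in terms of midpoints: the point $(\alpha+\beta)/2$ lies in the relative interior of the purported edge, yet the identity $\alpha+\beta=\alpha'+\beta'$ exhibits it as a convex combination of the two distinct vertices $\alpha',\beta'$ that do not lie on the edge. This contradicts the standard fact that a point in the relative interior of a face can be written as a convex combination only of vertices of that face, and I would invoke whichever formulation matches the general definitions set up in \cref{sec: computations}.
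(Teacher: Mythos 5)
Your proof is correct and is essentially the argument the paper intends: the paper gives no explicit proof, saying only that the lemma follows from the general non-edge criterion of \cref{sec: computations}, i.e.\ \cref{eq: non-edge}, and your closing midpoint reformulation — that $(\alpha+\beta)/2=(\alpha'+\beta')/2$ exhibits a relative-interior point of the purported edge as a convex combination of vertices off that edge — is exactly that deduction. Your supporting-hyperplane argument merely supplies the standard proof of the underlying fact, correctly handling the one delicate point (strictness of $c\cdot\alpha'<t$ and $c\cdot\beta'<t$, which is where $\alpha',\beta'\notin\{\alpha,\beta\}$ is used), so nothing needs to be changed.
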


The vertices $\alpha'$ and $\beta'$ in the above lemma will be called  \emph{witnesses of the rhombus criterion}, or simply witnesses, for the pair $\alpha, \beta$. Note that the witnesses can in special situations be on the same two-dimensional face as $\alpha, \beta$, but in general not.
We will say that two vertices $\alpha$, $\beta$ of $P$ fulfill the rhombus criterion if $\conv(\alpha, \beta)$, is either an edge of $P$ or have witnesses. 
Then a polytope $P$ \emph{fulfills the rhombus criterion} if every pair of vertices fulfill the rhombus criterion. 

As we will show later, the rhombus criterion appears in a surprising number of well studied $0/1$-polytopes (see \cref{sec: examples}). 
Primarily it provides us with way to determine whether $\alpha$, $\beta$ is an edge or not significantly easier than the general criterion given in \cref{eq: non-edge}, that requires us to consider all subsets of $V\setminus\{\alpha, \beta\}$ of size $d$. 
In fact, it can be checked in $O(d|V|^2)$ time, which scales linearly in dimension, as opposed to exponentially. 
We will begin with a first non-trivial example and some general theory about the rhombus criterion in \cref{sec: rhombus criterion}.

Polytopes defined by the characteristic imset of graphs has shown important in the study of the theory of causality and independence, see e.g. \cite{SHL10,SV08,S15,CS17}. In \cite{LRS20} the authors prove that all algorithms used for structure learning when finding an underlying directed graph  from conditional independence data, are in fact greedy edge walks on the graph of such a polytope. The rhombus criteria turned out to be very useful in \cite{LRS22} when the edge structure of $\CIM_G$, for $G$ a tree was determined. In \cref{sec: cim polytopes} we define and focus on the characteristic imset polytopes and see why the rhombus criterion is effective in these cases. 
Primarily we show that almost all pairs of vertices of the \emph{chordal graph polytope} fulfill the rhombus criterion, Theorem \ref{thm:split edges}. With this we give a partial answer to a question by Studen\'y, Cussens and Kratochv\'{\i}l, \cite{SCK21}.
We conjecture that the chordal graph polytope fulfills the rhombus criterion, \cref{conj:chordal}, and we raise the question whether it is true for the entire imset polytope $\CIM_n$. 
As we will see in \cref{sec: computations}, if many pairs of vertices have witnesses we can utilize the rhombus criterion to compute the edge structure for some characteristic imset polytopes, not previously known, for example $\CIM_5$.

In \cref{sec: examples} we have gathered further examples of 0/1-polytopes from the literature that does fulfill the rhombus criterion.

\section{The Rhombus Criterion}
\label{sec: rhombus criterion}




For easier notation we will say that a pair of vertices $\alpha, \beta\in V$ is an edge of $P$ if $\conv(\alpha, \beta)$ is an edge of $P$. 
Likewise we will say that $\alpha, \beta\in V$ is a \emph{non-edge} of $P$ if $\alpha, \beta$ is not an edge of $P$.
Let us, as an example, consider the spanning tree polytope. 
Let $[n]\coloneqq\{1, \dots, n\}$. 
Given a tree $T=([n], E)$ we define 
\[
v_T=\sum_{ij\in E}e_{ij}
\]
where $e_{ij}$ is the standard basis of $\mathbb{R}^{\binom{[n]}{2}}$. 
Then the \emph{spanning tree polytope} $T_n$ is defined as the convex hull of all $v_T$, that is
\[
T_n\coloneqq \conv\left(v_T\colon T=([n], E)\text{ is a tree}\right).
\]
The polytope $T_n$ is known to be $\binom{n}{2}-1$ dimensional, we have $\langle\mathbf{1}, v_T\rangle=n-1$ for all trees $T$. 
From the work of \cite{GGPS87} we have the following classification of the edges of $T_n$. 

\begin{theorem}\cite{GGPS87}
\label{thm: edges of spanning tree polytope}
For two trees $T_1$ and $T_2$, we have that $v_{T_1}$, $v_{T_2}$ is an edge of $T_n$ if and only if $v_{T_1}-v_{T_2} = e_{ij}-e_{i'j'}$. 
\end{theorem}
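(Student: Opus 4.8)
The plan is to recognize $T_n$ as the base polytope of the graphic matroid of $K_n$ and to prove the two implications separately: the direction that a single edge swap yields an edge of $T_n$ I would establish with an explicit linear functional, and the converse I would handle via the Rhombus Criterion together with the symmetric exchange property of spanning trees.

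For the \emph{if} direction, suppose $v_{T_1} - v_{T_2} = e_{ij} - e_{i'j'}$, so that $T_1$ and $T_2$ agree on every edge except that $T_1$ contains $ij$ and $T_2$ contains $i'j'$. I would define a cost vector $c$ by setting $c_e = 2$ for $e \in T_1 \cap T_2$, $c_e = 1$ for $e \in \{ij, i'j'\}$, and $c_e = 0$ otherwise, and argue that $\langle c, v_T\rangle$ is maximized precisely at $T_1$ and $T_2$. The common edges form a forest with $n-2$ edges on $n$ vertices, hence exactly two components; any tree missing a common edge scores at most $2(n-3)+2 < 2(n-2)+1$, while a tree containing all common edges is that forest plus one edge across the cut, which scores $2(n-2)+1$ only when the extra edge is $ij$ or $i'j'$, yielding $T_1$ or $T_2$. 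Since the optimal face of $c$ is then $\conv(v_{T_1}, v_{T_2})$, a segment between two vertices, it is an edge.

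For the \emph{only if} direction I would argue by contraposition: assume $v_{T_1} - v_{T_2}$ is not of the form $e_{ij} - e_{i'j'}$, equivalently $|T_1 \setminus T_2| = |T_2 \setminus T_1| \ge 2$, and produce witnesses for the Rhombus Criterion. Choosing any $e = uv \in T_1 \setminus T_2$, I would use a fundamental cycle/cut argument to find $f \in T_2 \setminus T_1$ such that both $T_1' = T_1 - e + f$ and $T_2' = T_2 + e - f$ are spanning trees: removing $e$ splits $T_1$ into components $A \ni u$ and $B \ni v$, the $u$--$v$ path of $T_2$ must cross the cut $(A,B)$, and any crossing edge $f$ lies in $T_2 \setminus T_1$ and repairs both trees. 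Then $v_{T_1'} + v_{T_2'} = v_{T_1} + v_{T_2}$, and since $|T_1 \setminus T_2| \ge 2$ neither $T_1'$ nor $T_2'$ can equal $T_1$ or $T_2$, so they are genuine witnesses. By \cref{lem: square criterion}, $v_{T_1}, v_{T_2}$ is a non-edge.

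The main obstacle will be the symmetric exchange step: verifying that a single edge $f$ simultaneously repairs both trees. The cut argument guarantees that $T_1 - e + f$ is a tree and the cycle argument that $T_2 + e - f$ is a tree, but the delicate point is that the \emph{same} $f$—an edge lying on the $T_2$-path and crossing the $T_1$-cut of $e$—serves both exchanges, which is exactly Brualdi's symmetric exchange property specialized to the graphic matroid.
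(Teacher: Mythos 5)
Your proof is correct, but note that the paper itself contains no proof of this statement: it is quoted directly from \cite{GGPS87}, and the paper's own contribution built on top of it is \cref{prop: STP is rhombus}. So you have produced a self-contained argument where the paper relies on a citation, and the comparison is still instructive. Your ``only if'' direction is essentially the paper's proof of \cref{prop: STP is rhombus} run in the opposite logical direction: the paper starts from a non-edge (using the cited theorem as input) and exhibits witnesses, whereas you construct witnesses from the combinatorial hypothesis $|T_1\setminus T_2|\ge 2$ and then invoke \cref{lem: square criterion} --- a general polytope fact proved independently of this theorem, so there is no circularity --- to conclude non-edgeness. Your exchange step (remove $e\in T_1\setminus T_2$, take the cut it defines in $T_1$, pick $f$ on the $T_2$-path between the endpoints of $e$ that crosses this cut) is the standard symmetric-exchange argument and is in fact cleaner than the paper's version in \cref{prop: STP is rhombus}, which adds an edge of $T_2$ to $T_1$ and rules out failure by a walk-along-the-cycle contradiction. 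The ``delicate point'' you flag at the end is already resolved by your own construction: crossing the cut guarantees $T_1-e+f$ is a tree, lying on the fundamental path guarantees $T_2+e-f$ is a tree, and both conditions hold simultaneously for your chosen $f$, so no separate appeal to Brualdi's symmetric exchange theorem is needed. Your ``if'' direction via the explicit weighting ($2$ on common edges, $1$ on $ij$ and $i'j'$, $0$ elsewhere) is also sound: the common edges form a forest with exactly two components, so the maximum value $2(n-2)+1$ is attained precisely at $v_{T_1}$ and $v_{T_2}$, making $\conv(v_{T_1},v_{T_2})$ a face and hence an edge. As a bonus, your argument establishes the theorem and \cref{prop: STP is rhombus} simultaneously, whereas the paper needs the cited theorem as input for the latter.
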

Equivalently, if we add in the edge $ij$ to $T_1$, this creates a cycle. 
Then we choose any edge in this cycle, except for $ij$, say $i'j'$, and remove this edge giving us a new tree $T_2$.
This is in fact equivalent with $T_n$ fulfilling the rhombus criterion. 

\begin{proposition}\label{prop: STP is rhombus}
The rhombus criterion is true for the spanning tree polytope, $T_n$.
\end{proposition}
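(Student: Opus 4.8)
The plan is to combine the edge classification of \cref{thm: edges of spanning tree polytope} with the symmetric exchange property of the graphic matroid. Fix two distinct trees $T_1$ and $T_2$ on $[n]$ and write $k \coloneqq |E(T_1)\setminus E(T_2)|$. Since both are spanning trees, $|E(T_1)\setminus E(T_2)| = |E(T_2)\setminus E(T_1)| = k$, so their symmetric difference is even. If $k=1$ then $v_{T_1}-v_{T_2}=e_{ij}-e_{i'j'}$ for some edges $ij$ and $i'j'$, and by \cref{thm: edges of spanning tree polytope} the pair $v_{T_1}, v_{T_2}$ is an edge of $T_n$, so the rhombus criterion is satisfied with no witnesses needed. (One checks conversely that a $k=1$ pair admits no witnesses at all, so invoking \cref{thm: edges of spanning tree polytope} here is genuinely necessary.) The remaining case is $k \geq 2$, where I must produce witnesses.

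For $k \geq 2$ I would perform a single symmetric exchange. Choose any $a \in E(T_1)\setminus E(T_2)$. Deleting $a$ from $T_1$ splits it into two components and so determines a cut $C^{\ast}$, while adding $a$ to $T_2$ creates a unique cycle $C$. A cycle and a cut meet in an even number of edges, and both $C$ and $C^{\ast}$ contain $a$, so there is an edge $b \neq a$ lying in both; such a $b$ necessarily lies in $E(T_2)\setminus E(T_1)$. For this $b$, both $T_3 \coloneqq (T_1 - a)+b$ and $T_4 \coloneqq (T_2 - b)+a$ are spanning trees, and
\[
v_{T_3}+v_{T_4} = (v_{T_1}-e_a+e_b)+(v_{T_2}-e_b+e_a) = v_{T_1}+v_{T_2},
\]
so $v_{T_3}, v_{T_4}$ are the desired witnesses, provided they are genuinely new vertices.

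Finally I would verify the distinctness conditions. Because $a \neq b$ with $b \notin E(T_1)$ and $a \notin E(T_2)$, we get $T_3 \neq T_1$ and $T_4 \neq T_2$ at once, and $a \in E(T_4)\setminus E(T_3)$ gives $T_3 \neq T_4$. An equality $T_3 = T_2$ or $T_4 = T_1$ would force the symmetric difference of $T_1$ and $T_2$ to consist of only two edges, i.e. $k=1$, contradicting $k \geq 2$; this is exactly where the assumption that $v_{T_1}, v_{T_2}$ is a non-edge enters. I expect the one substantive point to be the existence of the exchange edge $b$ lying simultaneously in the fundamental cycle and the fundamental cut (the symmetric exchange property, which is stronger than ordinary matroid exchange), while everything after it is a direct verification.
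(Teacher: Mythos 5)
Your proof is correct, and its overall strategy matches the paper's: both produce witnesses by performing a single symmetric exchange, swapping an edge $a\in T_1\setminus T_2$ against an edge $b\in T_2\setminus T_1$ so that both modified graphs remain trees, and both invoke \cref{thm: edges of spanning tree polytope} only to rule out that the constructed witnesses coincide with the original pair. Where you genuinely diverge is in how the existence of a valid exchange pair is established. The paper fixes $i'j'\in T_2\setminus T_1$, looks at the fundamental cycle $C$ of $i'j'$ in $T_1$, and argues by contradiction: if no edge of $C\setminus T_2$ could be swapped into $T_2$, then splicing the detour paths $P_k$ into $C$ would produce a cycle inside $T_2$. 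You instead fix $a\in T_1\setminus T_2$ and intersect the fundamental cut of $a$ with respect to $T_1$ against the fundamental cycle of $a$ with respect to $T_2$; since a cycle and a cut meet in an even number of edges and both contain $a$, a second common edge $b$ exists, and it automatically lies in $T_2\setminus T_1$ and makes both exchanges work. This is the textbook proof of the symmetric exchange property for graphic matroids, and it buys you two things the paper's argument does not: it is cleaner (no path-splicing contradiction), and it generalizes verbatim to any matroid base polytope, since symmetric exchange and the analogue of \cref{thm: edges of spanning tree polytope} (bases are polytope-adjacent iff their symmetric difference has size two) hold for all matroids. Your explicit case split on $k=|E(T_1)\setminus E(T_2)|$, together with the observation that $T_3=T_2$ or $T_4=T_1$ would force $k=1$, is a slightly more self-contained packaging of the same distinctness check the paper performs via its non-edge hypothesis; both are sound.
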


\begin{proof}
Let $T_1$ and $T_2$ be two trees such that $v_{T_1}$, $v_{T_2}$ is a non-edge of $T_n$.
We claim that we can choose edges $ij\in T_1\setminus T_2$ and $i'j'\in T_2\setminus T_1$  such that both $T_1\cup \{i'j'\}\setminus \{ij\}$ and $T_2\cup \{ij\}\setminus \{i'j'\}$ are both trees. 

Take any edge $i'j'\in T_2\setminus T_1$. 
Then $T_1\cup\{i'j'\}$ contains a unique cycle, let us call it $C$. 
Consider the set of edges $\{(ij)_k\}$ in $C\setminus T_2$. 
If for any of these edges $T_2\cup \{(ij)_k\}\setminus \{i'j'\}$ is a tree, then we are done. 
Otherwise there must be a unique cycle $C_k$ in $T_2\cup \{(ij)_k\}$ for all $k$. 
Let $P_k\coloneqq C_k\setminus (ij)_k$. 
Then we can walk along $C$ in $T_2$, taking the route of $P_k$ whenever we must, giving us a cycle in $T_2$, a contradiction. 
Hence there is at least one edge $ij\in C\setminus T_2$ such that $T_1\cup \{i'j'\}\setminus \{ij\}$ and $T_2\cup \{ij\}\setminus \{i'j'\}$ are both trees. 

Now we have $v_{T_1\cup \{i'j'\}\setminus \{ij\}} + v_{T_2\cup \{ij\}\setminus \{i'j'\}} = v_{T_1}+e_{i'j'} -e_{ij} + v_{T_2} +e_{ij}-e_{i'j'} =  v_{T_1}+ v_{T_2}$, and what is left to show is that $v_{T_1\cup \{i'j'\}\setminus \{ij\}}\neq v_{T_2}$. 
However if this was the case then $v_{T_1}$, $v_{T_2}$ would an edge of $T_n$, a contradiction. 
\end{proof}

Any neighborly polytope, that is every pair of vertices is an edge, also fulfills the rhombus criterion.
That is however a rather uninteresting case. 
Notice that a polytope fulfilling the rhombus criterion is not stable under combinatorial equivalence of $P$, it is however direct that it is stable under affine equivalence, as affine maps preserve sums. 
\begin{proposition}
\label{prop: affine equivalence of rhombus criterion}
Let $P$ and $Q$ be two affinely equivalent polytopes. 
Then $P$ fulfills the rhombus criteria if and only if $Q$ does.     
\end{proposition}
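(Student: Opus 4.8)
The plan is to exploit the single algebraic fact that the defining relation of the rhombus criterion is an \emph{affine} relation, and is therefore transported faithfully by any affine isomorphism. Since affine equivalence is symmetric — the inverse of an affine bijection is again an affine bijection — it suffices to prove one implication: assuming $P$ fulfils the rhombus criterion, I show $Q$ does too. Fix an affine isomorphism $f$ with $f(P)=Q$, say $f(x)=Ax+b$ with linear part $A$ injective on the affine hull of $P$. Such an $f$ restricts to a bijection from the vertex set $V_P$ of $P$ onto the vertex set $V_Q$ of $Q$ and induces an isomorphism of face lattices, so in particular $\conv(\alpha,\beta)$ is an edge of $P$ if and only if $\conv(f(\alpha),f(\beta))$ is an edge of $Q$.

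The key observation is that the condition $\alpha+\beta=\alpha'+\beta'$ is equivalent to the equality of midpoints $\tfrac12(\alpha+\beta)=\tfrac12(\alpha'+\beta')$, and each midpoint is an affine combination of points of $P$. Because $f$ preserves affine combinations we have $f(\tfrac12(\alpha+\beta))=\tfrac12(f(\alpha)+f(\beta))$, and likewise for $\alpha',\beta'$; because $f$ is injective, the midpoint of $\{\alpha,\beta\}$ equals that of $\{\alpha',\beta'\}$ if and only if the midpoint of $\{f(\alpha),f(\beta)\}$ equals that of $\{f(\alpha'),f(\beta')\}$. Hence $\alpha+\beta=\alpha'+\beta'$ holds in $P$ precisely when $f(\alpha)+f(\beta)=f(\alpha')+f(\beta')$ holds in $Q$.

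Now take an arbitrary pair of vertices $\gamma,\delta\in V_Q$ and write $\gamma=f(\alpha)$, $\delta=f(\beta)$ with $\alpha,\beta\in V_P$. If $\conv(\alpha,\beta)$ is an edge of $P$, then $\conv(\gamma,\delta)$ is an edge of $Q$ and this pair fulfils the criterion. Otherwise, since $P$ fulfils the rhombus criterion, there are witnesses $\alpha',\beta'\in V_P\setminus\{\alpha,\beta\}$ with $\alpha+\beta=\alpha'+\beta'$. Set $\gamma'=f(\alpha')$ and $\delta'=f(\beta')$; these lie in $V_Q$, they are distinct from $\gamma,\delta$ because $f$ is injective, and by the key observation $\gamma+\delta=\gamma'+\delta'$. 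Thus $\gamma',\delta'$ are witnesses for $\gamma,\delta$, so every pair of vertices of $Q$ fulfils the rhombus criterion, as required.

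The argument is essentially routine; the only point that requires care — and the whole reason the statement is true — is that the rhombus relation involves exactly two summands on each side, so that it records an equality of affine combinations rather than merely of vectors. This is what lets the translation part $b$ cancel and makes the relation invariant under affine, and not merely linear, maps; the injectivity of $f$ supplies the converse direction and keeps the witnesses distinct from $\gamma,\delta$.
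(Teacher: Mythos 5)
Your proof is correct and takes essentially the same route as the paper, which simply asserts the result as immediate ``as affine maps preserve sums''; your argument is the careful fleshing-out of that remark, mapping vertices, edges, and witnesses through the affine bijection. Your midpoint observation is in fact a welcome refinement, since strictly speaking affine maps do not preserve sums (the translation part would obstruct this) but do preserve the equality $\alpha+\beta=\alpha'+\beta'$ precisely because it is an equality of affine combinations.
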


Let us take a deeper look into how the rhombus criterion acts. 
Central here is the following theorem. 

\begin{theorem}\cite{Zie95}
\label{thm: edge of face}
Let $P$ be a polytope with vertices $V$, let $\alpha,\ \beta\in V$, and assume $F$ is a face of $P$ containing $\alpha$ and $\beta$. 
Then the pair $\alpha, \beta$ is an edge of $F$ if and only if it is an edge of $P$. 
\end{theorem}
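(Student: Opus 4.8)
The plan is to reduce the statement to two standard structural facts about the face lattice of a polytope, and then to observe that an edge is precisely a one-dimensional face. Recall that a nonempty face $G$ of $P$ is a set of the form $G = P \cap H$ for a supporting hyperplane $H = \{x : \langle c, x\rangle = c_0\}$ with $\langle c, x\rangle \le c_0$ valid on all of $P$; an edge of $P$ is a one-dimensional such face, and for two distinct vertices $\alpha, \beta$ the pair being an edge means exactly that $\conv(\alpha, \beta)$ is a face of $P$. Since $\dim \conv(\alpha, \beta) = 1$ whenever $\alpha \neq \beta$, the only thing to track across the two polytopes $F$ and $P$ is the property of \emph{being a face}; the dimension count takes care of itself.

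First I would record the two facts I need. (a) Transitivity: if $F$ is a face of $P$ and $G$ is a face of $F$, then $G$ is a face of $P$. (b) Heredity: if $F$ is a face of $P$ and $G$ is a face of $P$ with $G \subseteq F$, then $G$ is a face of $F$. For (b), writing $F = P \cap H_F$ and $G = P \cap H_G$ with $H_G$ a supporting hyperplane of $P$, one checks directly that $F \cap H_G = G$: the inclusion $F \cap H_G \subseteq P \cap H_G = G$ is immediate, and $G \subseteq F$ together with $G \subseteq H_G$ gives $G \subseteq F \cap H_G$. Since $H_G$ supports $P \supseteq F$, it also supports $F$, so $G = F \cap H_G$ is a face of $F$.

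With these in hand the theorem is a short argument. For the ``only if'' direction, suppose $\conv(\alpha, \beta)$ is an edge of $P$. As $\alpha, \beta \in F$ and $F$ is convex we have $\conv(\alpha, \beta) \subseteq F$, so by (b) it is a face of $F$; being one-dimensional, it is an edge of $F$. For the ``if'' direction, suppose $\conv(\alpha, \beta)$ is an edge of $F$; by (a) it is a face of $P$, and again one-dimensionality makes it an edge of $P$.

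The only place that requires genuine care, rather than bookkeeping, is verifying the heredity fact (b), specifically that the supporting hyperplane cutting out $G$ inside $P$ continues to support $F$ and cuts out exactly $G$. This is where a careless argument could slip, since one needs the hypothesis $G \subseteq F$ to pin down $F \cap H_G = G$ rather than merely $F \cap H_G \supseteq G$. Everything else is the observation that a one-dimensional face is the same thing as an edge, a property stable under passing between $F$ and $P$ because $\alpha \neq \beta$ forces the segment $\conv(\alpha,\beta)$ to have dimension exactly one in both.
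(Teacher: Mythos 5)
Your proof is correct, but note that the paper does not prove this statement at all---it is quoted as a known result from Ziegler [Zie95]---so there is no internal argument to compare against; what you have reconstructed is essentially the standard textbook proof (faces of faces are faces, a face of $P$ contained in $F$ is a face of $F$, and an edge is exactly a one-dimensional face, with dimension preserved since $\alpha\neq\beta$). One point worth flagging: you prove heredity (b) in detail but cite transitivity (a) as standard, and your closing remark that (b) is ``the only place that requires genuine care'' has the difficulty backwards. Fact (b) is the routine one, exactly as you show: the hyperplane supporting $P$ that cuts out $G$ automatically supports the subset $F$ and still cuts out $G$. Fact (a) is the step that needs an actual idea, because a functional defining $G$ as a face of $F$ need not attain its maximum over all of $P$ on $G$; the standard fix is to take a functional $c_F$ defining $F$ as a face of $P$ and perturb it to $c_F+\varepsilon c_G$ for sufficiently small $\varepsilon>0$, which then cuts out exactly $G$ from $P$. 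If citing (a) from [Zie95] is acceptable---and it is hard to object, given that the paper cites the whole theorem from the same source---your argument is complete; otherwise that perturbation argument is the one missing ingredient.
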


Using \cref{thm: edge of face} we also have that faces of polytopes fulfilling the rhombus criteria fulfill the rhombus criteria. 
\begin{proposition}
\label{prop: face of rhombus criterion}
Let $P$ be a polytope fulfilling the rhombus criterion and let $F$ be a face of $P$. 
For any vertices $\alpha$, $\beta$ in $F$ that have witnesses $\alpha'$ and $\beta'$ in $P$, we have $\alpha', \beta'\in F$. 
Thus $F$ fulfills the rhombus criterion. 
\end{proposition}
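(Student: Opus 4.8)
The plan is to exploit the standard representation of a face as the set of maximizers of a linear functional, which immediately forces any witnesses of a pair in $F$ to lie on the same face. First I would fix a linear functional $c$ and a scalar $m$ with $F = \{x \in P : \langle c, x\rangle = m\}$ and $\langle c, x\rangle \le m$ for all $x \in P$; such a pair exists by the definition of a face. Recall also that the vertices of $F$ are exactly the vertices of $P$ that attain this maximum.

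Next I would establish the first assertion. Suppose $\alpha, \beta \in F$ are vertices admitting witnesses $\alpha', \beta' \in V\setminus\{\alpha,\beta\}$, so that $\alpha + \beta = \alpha' + \beta'$. Applying $\langle c, \cdot\rangle$ to this identity and using $\langle c, \alpha\rangle = \langle c, \beta\rangle = m$ yields $\langle c, \alpha'\rangle + \langle c, \beta'\rangle = 2m$. But $\alpha', \beta' \in P$ forces $\langle c, \alpha'\rangle \le m$ and $\langle c, \beta'\rangle \le m$, and a sum of two quantities each at most $m$ can equal $2m$ only when both equal $m$. Hence $\alpha'$ and $\beta'$ both lie in $F$, which is precisely the claim.

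Finally I would deduce that $F$ fulfills the rhombus criterion. Take any pair of vertices $\alpha, \beta$ of $F$. Since $P$ fulfills the rhombus criterion, either $\alpha, \beta$ is an edge of $P$ or it has witnesses in $P$. In the first case \cref{thm: edge of face} applies: because $\alpha$ and $\beta$ both lie in the face $F$, the pair is an edge of $F$ if and only if it is an edge of $P$, so it is an edge of $F$. In the second case the witnesses lie in $F$ by the previous paragraph, and being vertices of $P$ contained in $F$ they are vertices of $F$, so $\alpha, \beta$ has witnesses inside $F$. Either way $\alpha, \beta$ fulfills the rhombus criterion within $F$, so $F$ fulfills the rhombus criterion.

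I do not expect a serious obstacle here; the entire content is the single linear-functional computation in the second paragraph. The only point requiring a little care is the bookkeeping that a witness, being a vertex of $P$ that the functional argument places in $F$, is genuinely a vertex of the smaller polytope $F$ and hence a legitimate witness there. This is a standard property of faces of polytopes, so no additional work beyond invoking it is needed.
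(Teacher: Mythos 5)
Your proposal is correct and follows essentially the same route as the paper's proof: representing the face $F$ by a maximizing cost function, applying it to the witness identity $\alpha+\beta=\alpha'+\beta'$ to force both witnesses onto $F$, and invoking \cref{thm: edge of face} to transfer edges between $F$ and $P$. The only cosmetic difference is that you phrase the key inequality argument directly (two terms each at most $m$ summing to $2m$ must both equal $m$) where the paper argues by contradiction; the content is identical.
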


\begin{proof}
Let $\alpha$, $\beta$ be a non-edge of $F$. 
By \cref{thm: edge of face} $\alpha$, $\beta$ is a non-edge of $P$ and hence there are two witnesses $\alpha'$ and $\beta'$ of the rhombus criterion. 
We claim that $\alpha'$ and $\beta'$ must be in $F$. 
Indeed, as $F$ is a face there is a cost function $c$ maximizing in $F$. 
This gives us $\langle c, \alpha'\rangle \leq \langle c, \alpha\rangle= \langle c, \beta\rangle\geq  \langle c, \beta'\rangle$. 
However, by the rhombus criterion we have that $\langle c, \alpha\rangle + \langle c, \beta\rangle=\langle c, \alpha'\rangle+\langle c, \beta'\rangle$ and thus if $
\langle c, \alpha'\rangle < \langle c, \alpha\rangle= \langle c, \beta\rangle$ we must have $\langle c, \alpha\rangle= \langle c, \beta\rangle < \langle c, \beta'\rangle$, a contradiction. 
Thus we must have $\langle c, \alpha'\rangle = \langle c, \alpha\rangle= \langle c, \beta\rangle= \langle c, \beta'\rangle$ and the result follows.
\end{proof}

It can be checked that the only 2-dimensional polytopes that fulfill the rhombus criterion are triangles and rhombuses. 
Therefore we get the following corollary. 

\begin{corollary}
\label{cor: 2 dim faces}
If $P$ fulfill the rhombus criterion then the 2-dimensional faces are all triangles or rhombuses. 
\end{corollary}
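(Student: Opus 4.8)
The plan is to combine Proposition~\ref{prop: face of rhombus criterion} with a classification of the $2$-dimensional polytopes that fulfill the rhombus criterion. By Proposition~\ref{prop: face of rhombus criterion}, every $2$-dimensional face $F$ of $P$ again fulfills the rhombus criterion, so it suffices to show that a convex polygon fulfilling the criterion is either a triangle or a parallelogram (what is here called a rhombus). Since the criterion is affinely invariant by Proposition~\ref{prop: affine equivalence of rhombus criterion}, there is no loss in arguing up to affine equivalence. Fix such a polygon $F$ with vertices $v_1,\dots,v_n$ in cyclic order.

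The cases $n\le 4$ are immediate. If $n=3$ there are no non-edges and the criterion holds vacuously, giving a triangle. If $n=4$ the only non-edges are the two diagonals $\{v_1,v_3\}$ and $\{v_2,v_4\}$, and the only vertices available as witnesses for $\{v_1,v_3\}$ are $v_2$ and $v_4$; hence $v_1+v_3=v_2+v_4$, which says exactly that the two diagonals share a midpoint, i.e. $F$ is a parallelogram. Conversely a parallelogram satisfies this single equation, so these are precisely the quadrilaterals fulfilling the criterion.

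The content is therefore to rule out $n\ge 5$, and this is where I expect the main difficulty. The key reduction I would use concerns the \emph{short} diagonals, those skipping a single vertex. Consider the non-edge $\{v_{i-1},v_{i+1}\}$; any witnesses $\alpha',\beta'$ satisfy $\alpha'+\beta'=v_{i-1}+v_{i+1}$, so the segment $\alpha'\beta'$ passes through the midpoint $m$ of the chord $v_{i-1}v_{i+1}$. As $m$ lies on the line through $v_{i-1}$ and $v_{i+1}$, the endpoints $\alpha',\beta'$ lie on opposite sides of (or on) this line; but the only vertices of $F$ on that line are the excluded $v_{i-1},v_{i+1}$, while the only vertex strictly on the short-arc side is $v_i$. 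Hence the witnesses are forced to be $\{v_i,\ p_i\}$ with $p_i:=v_{i-1}+v_{i+1}-v_i$, and so $p_i$ must itself be a vertex of $F$ for every $i$.

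It remains to show that for $n\ge 5$ these requirements are jointly inconsistent with strict convexity, which is the genuinely technical step. Writing $e_i:=v_{i+1}-v_i$ for the edge vectors, the condition reads $p_i=v_{i-1}+e_i$, so the chord from $v_{i-1}$ to the vertex $p_i$ is parallel to the edge $e_i$. Following these parallel chords cyclically produces rigid relations among the edge vectors: for instance the symmetric choice $p_i=v_{i+2}$ forces $e_{i+1}=-e_{i-1}$ for all $i$, which upon traversing the odd cycle yields $2e_i=0$, a contradiction, and the analogous computation excludes larger $n$. The remaining, mixed assignments $i\mapsto p_i$ must be excluded using that the edge directions of a convex polygon increase strictly and total $2\pi$, so that a chord parallel to $e_i$ can meet the boundary only at a tightly constrained vertex; carrying this bookkeeping through all $i$ is where I would concentrate the effort. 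Once $n\ge 5$ is eliminated, the classification, and hence the corollary, follows.
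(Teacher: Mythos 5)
Your overall route is the same as the paper's: by \cref{prop: face of rhombus criterion} every $2$-dimensional face again fulfills the rhombus criterion, so the corollary reduces to classifying the polygons that fulfill it. The paper itself offers nothing beyond this reduction (the classification is dismissed with ``it can be checked''), so all of the substance lies in that classification. Your treatment of $n\le 4$ is correct, and your key observation for $n\ge 5$ is correct and well chosen: the witnesses of the short diagonal $\{v_{i-1},v_{i+1}\}$ must share its midpoint, no vertex besides $v_{i\pm1}$ lies on the line through $v_{i-1},v_{i+1}$ (three vertices of a polytope cannot be collinear), and the only vertex strictly on the $v_i$-side of that line is $v_i$; hence the witnesses are forced to be $v_i$ and $p_i=v_{i-1}+v_{i+1}-v_i$, so each $p_i$ must itself be a vertex.

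The genuine gap is that you never actually eliminate $n\ge 5$ from this: you dispose only of the uniform assignment $p_i=v_{i+2}$ and explicitly defer the ``mixed'' assignments, which is exactly where the content of the paper's ``it can be checked'' lives. For $n=5$ your own relations do suffice, and the finish is short: there $p_i\in\{v_{i+2},v_{i-2}\}$ is forced, $p_i=v_{i+2}$ gives $e_{i+1}=-e_{i-1}$ and $p_i=v_{i-2}$ gives $e_i=-e_{i-2}$, so the set $S\subseteq\mathbb{Z}_5$ of indices $j$ with $e_{j+1}=-e_{j-1}$ meets $\{i-1,i\}$ for every $i$ and hence has $|S|\ge 3$; any $3$-subset of $\mathbb{Z}_5$ contains some $j$ and $j+2$, whence $e_{j+3}=-e_{j+1}=e_{j-1}$, i.e.\ two distinct edges of a convex polygon with equal vectors, contradicting that edge directions are pairwise distinct. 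For $n\ge 6$, however, $p_i$ need not be $v_{i\pm 2}$, and excluding these far witnesses requires real convexity bookkeeping, not a routine check: for instance, $p_i$ being a vertex forces the interior angles to satisfy $\alpha_i+\alpha_{i+1}\ge\pi$, with equality if and only if $p_i=v_{i+2}$, and summing over $i$ shows at most three equalities can occur, so for $n\ge 7$ some $p_i$ is necessarily far and must then be analysed against the strictly increasing edge directions. Until that analysis is written out, your text fully proves the corollary only for faces with at most four (with the pentagon argument above, five) vertices; as it stands it is an honest and more detailed account than the paper's, but a plan rather than a proof.
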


In \cref{sec: examples} most examples will have some type of symmetry, therefore let us take a highly symmetric polytope that do not fulfill the rhombus criterion. 

\begin{example}
Let $P_n$ be the permutaheron of order $n$. 
That is let $v=(1, 2, \dots, n)$ and let all permutations on $\mathfrak{S}_n$ act on this vector via permuting the indices. 
Then $P_n$ is the convex hull of $\pi v$ for all $\pi\in\mathfrak{S}_n$. 
It is known that $P_n$ is a $n-1$ dimensional polytope and two vertices $\alpha$, $\beta$ is an edge if and only if there is a fundamental transposition $\pi$, that is $\pi = (i\ i+1)$ for some $i$, such that $\pi\alpha = \beta$, or vice versa. 

Then $P_3$ is a two dimensional polytope with $6$ vertices, that is a hexagon. 
By \cref{cor: 2 dim faces} we have that $P_3$ does not fulfill the rhombus criterion.  
\end{example}

Less obvious is that fulfilling the rhombus criterion is stable under cross products. 
\begin{proposition}
\label{prop: products of rhombus criterion}
Let $P$ and $Q$ be two polytopes. 
Then $P\times  Q$ fulfills the rhombus criterion  if and only if $P$ and $Q$ fulfill the rhombus criterion. 
\end{proposition}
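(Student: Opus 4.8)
The plan is to reduce everything to two standard facts about a product polytope $P\times Q$: its vertices are exactly the pairs $(p,q)$ with $p\in V(P)$ and $q\in V(Q)$, and its faces are exactly the products $F\times G$ of a face $F$ of $P$ with a face $G$ of $Q$. In particular, two vertices $(p_1,q_1),(p_2,q_2)$ form an edge of $P\times Q$ if and only if one coordinate agrees and the remaining pair is an edge of the corresponding factor. The observation that makes the rhombus criterion interact well with products is that the defining vector equation splits coordinatewise: $(p_1,q_1)+(p_2,q_2)=(p_1',q_1')+(p_2',q_2')$ holds in $P\times Q$ if and only if $p_1+p_2=p_1'+p_2'$ in $P$ and $q_1+q_2=q_1'+q_2'$ in $Q$.

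For the forward direction I would note that for any fixed vertex $q_0\in V(Q)$ the set $P\times\{q_0\}$ is a face of $P\times Q$ and is affinely equivalent to $P$, and symmetrically $\{p_0\}\times Q$ is a face affinely equivalent to $Q$. Hence, if $P\times Q$ fulfils the rhombus criterion, then by \cref{prop: face of rhombus criterion} so do these faces, and by \cref{prop: affine equivalence of rhombus criterion} so do $P$ and $Q$.

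For the reverse direction, suppose $P$ and $Q$ fulfil the rhombus criterion and let $(p_1,q_1),(p_2,q_2)$ be a non-edge of $P\times Q$; I must produce witnesses. I split into cases according to the edge description above. If $p_1\ne p_2$ and $q_1\ne q_2$, then the swapped vertices $(p_1,q_2)$ and $(p_2,q_1)$ are distinct from both original vertices and satisfy $(p_1,q_1)+(p_2,q_2)=(p_1,q_2)+(p_2,q_1)$, so they are witnesses; note that this case needs no hypothesis on the factors. Otherwise one coordinate agrees, say $p_1=p_2=p$ (the other case being symmetric), and since the pair is a non-edge the edge description forces $\{q_1,q_2\}$ to be a non-edge of $Q$; by hypothesis it then has witnesses $q_1',q_2'\in V(Q)\setminus\{q_1,q_2\}$ with $q_1+q_2=q_1'+q_2'$, whence $(p,q_1'),(p,q_2')$ are witnesses for the original pair.

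The only real content, and the step I would be most careful about, is invoking the edge structure of the product in the case where one coordinate agrees: I must be sure that a pair with $p_1=p_2$ failing to be an edge of $P\times Q$ genuinely yields a non-edge of $Q$, which is exactly the statement that an edge of the face $\{p\}\times Q$ remains an edge of $P\times Q$, a consequence of \cref{thm: edge of face}. Everything else is the coordinatewise splitting of the sum and checking that the produced witnesses are distinct from the original pair, both of which are routine.
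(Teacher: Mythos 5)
Your proposal is correct and follows essentially the same route as the paper: the same case split on whether a coordinate agrees (with the swap $(p_1,q_2),(p_2,q_1)$ as witnesses when both coordinates differ, and witnesses lifted from the factor when one agrees), and the same use of \cref{prop: face of rhombus criterion} together with \cref{prop: affine equivalence of rhombus criterion} for the converse. Your explicit appeal to \cref{thm: edge of face} to justify that a non-edge with $p_1=p_2$ yields a non-edge of $Q$ is just a more careful spelling-out of the edge description of the product that the paper states without proof.
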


\begin{proof}
Assume $P$ and $Q$ fulfill the rhombus criterion. 
Any edge of $P\times  Q$ is on the form $e_P\times v_Q$ or $v_P\times e_Q$ for some edge and vertex in $P$ and $Q$. 
Thus take two vertices of $P\times  Q$, $\alpha_P\times \alpha_Q$ and $\beta_P\times \beta_Q$ that is not an edge. 
If $\alpha_P=\beta_P$ (or $\alpha_Q=\beta_Q$) we must have that $\alpha_Q$, $\beta_Q$ is not an edge of $Q$, else  $\alpha_P\times \alpha_Q$, $\beta_P\times \beta_Q$ is an edge of $P\times Q$. 
As $Q$ fulfilled the rhombus criterion we can find $\alpha'_Q$, $\beta'_Q$ such that $\alpha_Q+\beta_Q=\alpha'_Q+\beta'_Q$, and therefore we have $\alpha_P\times\alpha_Q+\beta_P\times\beta_Q=\alpha_P\times\alpha'_Q+\beta_P\times\beta'_Q$. 
That is $\alpha_P\times\alpha'_Q$, and $\beta_P\times\beta'_Q$ are witnesses of the rhombus criterion. 

Otherwise we have $\alpha'=\alpha_P\times \beta_Q$ and $\beta'=\alpha_Q\times \beta_P$ as witnesses since 
$\alpha_P\times \alpha_Q+\beta_P\times \beta_Q =\alpha_P\times \beta_Q+\alpha_Q\times \beta_P$. 
Thus $P\times Q$ fulfill the rhombus criterion.

The other way around, note that $P$ is affinely equivalent to $v_Q\times P$ for any vertex $v_Q$ of $Q$. 
It follows from the above that $v_Q\times P$ is a face of $P\times Q$ and hence fulfill the rhombus criterion. 
Then the result follows by \cref{prop: face of rhombus criterion} and \cref{prop: affine equivalence of rhombus criterion}.
\end{proof}

Then as simplices are neighborly it follows that products of simplicies fulfill the rhombus criterion. 
The most well known example of this is of course the $d$-cube. 

\begin{corollary}\label{cor: Cube is rhombus}
The standard cube, $[0,1]^d$, fulfill the rhombus criterion. 
\end{corollary}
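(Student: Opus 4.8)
The plan is to realize $[0,1]^d$ as a $d$-fold Cartesian product of line segments and then invoke the product-stability result already established. First I would observe that $[0,1]$ is a $1$-simplex, and since every simplex is neighborly, its unique pair of vertices forms an edge; hence $[0,1]$ fulfills the rhombus criterion trivially. This gives the base case of the induction.

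Next I would write $[0,1]^d = [0,1] \times [0,1]^{d-1}$ and argue by induction on $d$. Assuming $[0,1]^{d-1}$ fulfills the rhombus criterion, \cref{prop: products of rhombus criterion} immediately yields that the product $[0,1] \times [0,1]^{d-1} = [0,1]^d$ does as well, closing the induction. Since each inductive step is a single application of \cref{prop: products of rhombus criterion}, there is no genuine obstacle here and the statement is a direct corollary; the only point requiring care is that \cref{prop: products of rhombus criterion} is phrased for a binary product, so one must iterate it rather than apply it once.

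Alternatively, a self-contained direct argument is available, and I would mention it for transparency. The vertices of $[0,1]^d$ are the points of $\{0,1\}^d$, and two vertices $\alpha,\ \beta$ form an edge precisely when they differ in a single coordinate. If instead they differ in at least two coordinates, pick one such coordinate $i$ and flip it in both vertices: set $\alpha' = \alpha + (\beta_i - \alpha_i)e_i$ and $\beta' = \beta + (\alpha_i - \beta_i)e_i$, where $e_i$ denotes the $i$-th standard basis vector of $\mathbb{R}^d$. Then $\alpha' + \beta' = \alpha + \beta$, and both $\alpha'$ and $\beta'$ again lie in $\{0,1\}^d$.

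The final check in this direct route is that $\alpha'$ and $\beta'$ are genuine witnesses, i.e.\ that they lie in $V \setminus \{\alpha, \beta\}$. This is where the hypothesis that $\alpha$ and $\beta$ differ in at least two coordinates is used: flipping coordinate $i$ makes $\alpha' \neq \alpha$ and $\beta' \neq \beta$, while the remaining coordinate in which $\alpha$ and $\beta$ disagree guarantees $\alpha' \neq \beta$ and $\beta' \neq \alpha$. Thus every non-edge has witnesses, which is precisely the rhombus criterion. I would present the product argument as the main proof and note this direct construction as a remark.
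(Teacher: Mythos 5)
Your main argument is precisely the paper's proof: the cube is a product of segments (1-simplices), which are neighborly, so \cref{prop: products of rhombus criterion} applies; the paper leaves the iteration over $d$ factors implicit, which your induction merely makes explicit. Your supplementary direct argument (flipping one coordinate in which a non-edge pair differs to produce witnesses) is also correct, but it is an optional extra rather than a different route for the main claim.
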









\section{Characteristic Imset Polytopes}
\label{sec: cim polytopes}
The characteristic imset polytope was introduced by Studen\'y, Hemmecke, and Lindner as a 0/1-polytope and transformed causal discovery into a linear program \cite{SHL10}. 
Since its introduction it has been studied by several groups of people \cite{LRS20, S15, SV08, CS17, XY12}. 
Given any \emph{directed acyclic graph} (DAG), $\GG$, we define the \emph{characteristic imset}, $c_\GG$, as
\[
c_\GG(S) \coloneqq
\begin{cases}
1	&	\text{ if there exists $i\in S$ such that, $S\subseteq \pa_\GG(i)\cup \{i\}$},	\\
0	&	\text{ otherwise}.	\\
\end{cases}
\]
Here $\pa_\GG(i)$ denotes the \emph{parents} of $i$, that is all vertices $j$ such that we have the arc $j\to i$ in $\GG$. 
Formally, $c_\GG$ is a function $c_\GG\colon \{S\subseteq[n]\colon |S|\geq 2\}\to \mathbb{R}$, however, as $\{S\subseteq[n]\colon |S|\geq 2\}$ is finite we can identify it with a vector in $\mathbb{R}^{2^n-n-1}$. 
Then we consider the \emph{characteristic imset polytope} defined as the convex hull of all characteristic imsets,
\[
\CIM_n\coloneqq\conv(c_\GG\colon \GG=([n], E)\text{ is a DAG}). 
\]
The main question we ask is whether the rhombus criterion is true for $\CIM_n$. 
\begin{question}
Does the rhombus criterion hold true for the characteristic imset polytope $\CIM_n$?
\end{question} 
In \cref{sec: computations} we describe how the above question can be answered in the affirmative for $n\leq 5$. 
Several articles have focused on the subpolytope spanned by the imsets of a strict subset of all DAGs. 
For example, the authors of \cite{LRS20} defined 
\[
\CIM_G\coloneqq\conv(c_\GG\colon \GG\text{ is a DAG with skeleton }G). 
\]
It can in fact be shown that for any graph $G$, the polytope $\CIM_G$ is a face of $\CIM_n$ \cite[Corollary 2.5]{LRS20}. 
The reason we study these polytopes from the perspective of the rhombus criterion is the following theorem that was recently shown. 

\begin{theorem}\label{thm: CIMG is rhombus}\cite{LRS22}
Let $G$ be a tree. 
Then $\CIM_G$ fulfills the rhombus criterion. 
\end{theorem}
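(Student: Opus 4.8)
The plan is to exploit the very rigid combinatorics of characteristic imsets on a tree. Since every DAG $\GG$ with skeleton $G$ is just an orientation of $G$, and since a tree has no triangles, a set $S$ with $|S|\ge 3$ and $c_\GG(S)=1$ must be a \emph{star} $S=\{i\}\cup T$ with $T\subseteq \pa_\GG(i)$ a set of neighbours of a \emph{unique} centre $i$; the two-element coordinates merely record the skeleton and are constant on $\CIM_G$. Thus the non-constant coordinates partition by their centre, and I would write $c_\GG=\bigoplus_{i} x_{\pa_\GG(i)}$, where for $A$ a set of neighbours $N(i)$ the block $x_A$ is defined by $x_A(T)=1$ exactly when $2\le|T|$ and $T\subseteq A$. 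The crucial feature is that $x_A$ depends only on the parent set $\pa_\GG(i)$ and equals the zero vector $\zero$ as soon as $|\pa_\GG(i)|\le 1$.

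The engine of the proof is a local rigidity statement at a single centre: if $x_{A'}+x_{B'}=x_A+x_B$, then $\{x_{A'},x_{B'}\}=\{x_A,x_B\}$ as multisets. I would prove this by reading off, from the coordinatewise sum, both the \emph{doubled} ideal $\{T:T\subseteq A\cap B\}$ and the \emph{union} ideal $\{T: T\subseteq A\text{ or }T\subseteq B\}$; on the Boolean lattice the maximal elements of the union ideal are precisely the size-$\ge 2$ sets among $A,B$, so the unordered pair is forced, up to the collapse of sets of size $\le 1$ to $\zero$. Combined with the block decomposition, this shows that any witnesses $\GG_1',\GG_2'$ for a pair $\GG_1,\GG_2$ must reproduce, centre by centre, the same multiset of contributions $\{x_{\pa_{\GG_1}(i)},x_{\pa_{\GG_2}(i)}\}$. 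Hence a witness is a genuinely new vertex exactly when its per-centre choices form a pattern that is neither ``everywhere $\GG_1$'' nor ``everywhere $\GG_2$'' on the set of \emph{active} centres, those $i$ at which the two contributions differ; at every inactive centre, and in particular wherever both parent sets have size $\le 1$, the choice is free.

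A first observation is that the blocks $x_A$ for $|A|\ge 2$, together with $\zero$, are affinely independent (the map $A\mapsto x_A$ is unitriangular with respect to inclusion), so each single-centre polytope is a simplex and is neighbourly. Were the centres independent, $\CIM_G$ would therefore be a product of simplices and the theorem would follow at once from \cref{prop: products of rhombus criterion}; the real content lies in the coupling introduced by shared edges. To produce witnesses for a non-edge $\GG_1,\GG_2$ I would try to split the active set $\mathcal A$ into two nonempty blocks and build orientations realising $\GG_1$'s contributions on one block and $\GG_2$'s on the other (with the complementary choice for the partner); by the rigidity statement the two new imsets sum to $c_{\GG_1}+c_{\GG_2}$ and, being a mixed pattern, are genuinely new. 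I would carry out this construction by induction on $|G|$, deleting a leaf $\ell$ with neighbour $p$: the leaf feeds only into the block at $p$, so one recurses on $G-\ell$ and re-orients the single edge $\ell p$ to preserve the multiset at $p$. The forced edges—the stars that are $1$ in \emph{both} imsets, which pin whole in-stars—let me apply \cref{prop: face of rhombus criterion} and \cref{prop: products of rhombus criterion} to the smaller forests into which $G$ decomposes.

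I expect the main obstacle to be exactly this coupling between adjacent centres: reorienting an edge to toggle the contribution at one centre simultaneously alters the parent set at the other endpoint, so the blocks of $\mathcal A$ cannot be toggled independently in general, and when extending witnesses across $\ell p$ the contribution at $p$ must be held fixed. Concretely, the hard part is to show that on a tree a non-edge always admits a compatible split—that the in-degree-$\le 1$ freedom can be routed along the tree so as to separate two differences without ever creating or destroying an unwanted immorality. The converse half, that an irreducible single difference forces an edge (matching the $|\mathcal A|=1$ case, where rigidity already forbids any genuine witness), I would certify with the cost function rewarding coordinates equal to $1$ in both imsets and penalising those equal to $0$ in both: a midpoint computation shows its maximising face is exactly the set of orientations agreeing with $\GG_1,\GG_2$ on all common coordinates, and controlling that face is where the tree structure must do the real work.
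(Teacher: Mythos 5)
First, note that the paper you are being compared against does not actually prove this theorem: it is imported wholesale from \cite{LRS22}, where it rests on an explicit characterization of the polytopal edges of $\CIM_G$ for trees (the ``essential flips'' referred to later in the paper). So your proposal must stand on its own, and as a self-contained proof it has a genuine gap. What you do establish is correct and is a sensible reformulation: for a tree every orientation is automatically acyclic, the coordinates indexed by sets of size at least $3$ decompose into blocks with a unique star center, the block at $i$ is exactly your $x_{\pa_\GG(i)}$, and your local rigidity lemma holds (from $x_A+x_B$ one reads off the doubled ideal of subsets of $A\cap B$ and the union ideal, whose maximal elements recover the unordered pair $\{A,B\}$ up to the collapse of parent sets of size at most $1$ to $\zero$). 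Consequently any witness pair must redistribute, center by center, the same multiset of block contributions, and any \emph{realizable} mixed pattern does yield witnesses.

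But that reduction is where your argument stops, and the statement it reduces to \emph{is} the theorem: you must show that every non-edge of $\CIM_G$ admits a realizable mixed pattern, or equivalently that whenever no such pattern exists the pair is an edge, certified by a cost function maximized exactly at the two imsets. Neither half is carried out. The leaf-deletion induction is only sketched, and it runs directly into the obstruction you yourself identify: flipping edges to switch the contribution at one active center necessarily changes parent sets at adjacent centers, and nothing in the proposal shows this disturbance can always be absorbed by in-degree-$\leq 1$ freedom or routed to the other side of the split without creating a block value that belongs to neither $\GG_1$ nor $\GG_2$ (which would violate the per-center rigidity and destroy the witness property). Likewise, the closing cost-function claim for the ``irreducible'' case is named but its maximizing face is never analyzed, which is nontrivial precisely because realizability couples the blocks, so the polytope is a proper subset of the product of single-center simplices and \cref{prop: products of rhombus criterion} does not apply. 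Acknowledging that this is ``where the tree structure must do the real work'' does not substitute for doing that work; as it stands the proposal is a correct change of coordinates plus an unproven combinatorial claim equivalent to the theorem.
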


Furthermore, in \cite{XY12} Xi and Yoshida studied the \emph{diagnosis model} and the more general \emph{fixed order $\CIM$-polytope}. 
Fix an order $\sigma=(v_1,\dots,v_n)$ of $[n]$.
We say that a DAG $\GG$, with skeleton $G$, respects $\sigma$ if $v_iv_j\in G$, with $i<j$, implies $v_i\to v_j\in \GG$. 
Then given two graphs $D\subseteq H$ we define the fixed order $\CIM$-polytope to be convex hull of all DAGs $\GG$ such that $D\subseteq G\subseteq H$ and $\GG$ respects $\sigma$. 
Then again, the rhombus criterion is fulfilled. 

\begin{proposition}\label{prop: fixed order CIM is rhombus}
The fixed order $\CIM$-polytope fulfills the rhombus criterion. 
\end{proposition}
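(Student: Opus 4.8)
The plan is to exploit the fact that fixing the order $\sigma$ rigidifies every orientation, so that the vertices become a combinatorially transparent family, and then to recognise the polytope as a product of simplices.

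First I would record the effect of the fixed order on the characteristic imset. Since every skeleton edge $v_iv_j$ with $i<j$ is forced to be oriented $v_i\to v_j$, a DAG respecting $\sigma$ is completely determined by its skeleton $G$, and so the vertices are indexed by the graphs $G$ with $D\subseteq G\subseteq H$. Writing $x_e\in\{0,1\}$ for the indicator that an edge $e$ lies in $G$, I would check that for any set $S$ with $\sigma$-maximal element $m=\max_\sigma S$ the defining condition $S\subseteq \pa_\GG(i)\cup\{i\}$ can only be satisfied by $i=m$, since every parent precedes its child in $\sigma$; hence
\[
c_G(S)=\prod_{e\in E_S}x_e, \qquad E_S\coloneqq\{\,\{j,m\}\colon j\in S\setminus\{m\}\,\}.
\]
Thus each coordinate of $c_G$ is a monomial in the edge variables, and crucially all the edges entering that monomial are incident to the single vertex $m$.

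Next I would use this incidence structure to split the polytope into a product. Partitioning the free edges $E(H)\setminus E(D)$ according to their $\sigma$-larger endpoint $m$, the coordinate attached to a set $S$ depends only on the free edges incident to $m=\max_\sigma S$, and different values of $m$ involve disjoint blocks of variables. Because $G$ ranges over the full product of independent choices in the separate blocks, and after discarding the coordinates that are forced to be constant (which changes the polytope only up to affine equivalence, \cref{prop: affine equivalence of rhombus criterion}), the fixed order $\CIM$-polytope is affinely equivalent to a product $\prod_m P_m$, where $P_m$ is spanned by the monomial vectors coming from the sets with top element $m$. By \cref{prop: products of rhombus criterion} it then suffices to treat a single factor $P_m$.

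Finally I would identify each factor as a simplex. If $m$ has $k$ free incoming edges, then for every nonempty $I\subseteq[k]$ some coordinate of $P_m$ equals the monomial $\prod_{i\in I}x_i$, so the vertex coming from $x$ has $I$-coordinate $1$ exactly when $\{i\colon x_i=1\}\supseteq I$. The array indexed by (vertex $A$, coordinate $I$) recording the containment $I\subseteq A$ is the zeta matrix of the Boolean lattice on $[k]$, which is unitriangular and therefore invertible; this forces the $2^k$ vertices to be affinely independent, so $P_m$ is a simplex. Since simplices are neighborly, each $P_m$ fulfills the rhombus criterion, and so does the product. The step deserving the most care is the product decomposition: one must verify that grouping the coordinates by their top vertex genuinely separates the edge variables into independent blocks and that the forced constant coordinates can be removed harmlessly. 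Once this is in place, the identification of each factor as a simplex through the zeta matrix is routine.
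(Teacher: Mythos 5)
Your proof is correct and follows the same skeleton as the paper's: both reduce the statement to the fact that the fixed order $\CIM$-polytope is a product of simplices and then invoke \cref{prop: products of rhombus criterion} together with the neighborliness of simplices. The difference is that the paper simply cites \cite{XY12} for the product-of-simplices structure, whereas you prove it from scratch: the observation that the coordinate indexed by $S$ can only be witnessed by the $\sigma$-maximal element $m$ of $S$ (so that $c_G(S)$ is a monomial in the indicators of free edges whose $\sigma$-larger endpoint is $m$), the resulting partition of the free edges into independent blocks, and the identification of each block's polytope as a simplex. This self-contained derivation is a genuine addition; the paper's proof buys brevity at the price of an external dependency, while yours verifies the decomposition explicitly. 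One small imprecision: the containment array with coordinates indexed by \emph{nonempty} $I\subseteq[k]$ is the zeta matrix with its $\emptyset$-column deleted, hence not square and not literally invertible; for affine independence you should augment it with the all-ones column (which is precisely the missing $\emptyset$-column), recovering the full zeta matrix of the Boolean lattice, which is unitriangular in any linear extension order and hence invertible. With that one-line fix, every step --- the forced orientations, the monomial formula, the block decomposition (including discarding constant and duplicated coordinates up to affine equivalence via \cref{prop: affine equivalence of rhombus criterion}), and the simplex identification --- is sound.
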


\begin{proof}
By the work of \cite{XY12}, the fixed order $\CIM$-polytopes are products of simplices. 
Hence the result follows from \cref{prop: products of rhombus criterion} and the fact that simplicies are neighborly. 
\end{proof}

Similar to DAGs, using undirected graphs to study causality and correlation can be done as well. 
The interpretation of directed and undirected graphs coincide exactly when a DAG $\GG$ is without v-structures. 
For a DAG to have no v-structures, its skeleton must be chordal, and if the skeleton is chordal we can direct it without creating v-structures. 
Thus chordal graphs exactly captures the intersection between directed and undirected graphical models. 
In this case we can consider the the characteristic imset as a vector that only depends on the skeleton of the DAG, and the definition of characteristic imset simplifies significantly. 
Given an undirected chordal graph $G$, the characteristic imset is defined as
\[
c_G(S)=\begin{cases}
1 & \text{ if  $G|_S$ is complete}\\
0 & \text{ otherwise.}
\end{cases}
\]
Then we define the \emph{chordal graph polytope} as 
\[
\CGP_n = \conv\left(c_G\colon G \text{ is a chordal graph}\right).
\]
 
Thus the chordal graph polytope is spanned by the subset of all vertices of the characteristic imset polytope that does not contain v-structures. 
This polytope is well studied \cite{CS17, SCK21} from the perspective of causal discovery and describing certain facets. 
Studen\'y, Cussens, and Kratochv\'il raised the question of describing the edges of $\CGP_n$, and to this end we conjecture that the answer is in the rhombus criterion. 
\begin{conjecture} \label{conj:chordal}
The chordal graph polytope, $\CGP_n$, fulfills the rhombus criterion. 
\end{conjecture}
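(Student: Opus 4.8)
The plan is to reduce the statement to a purely combinatorial redistribution problem on chordal graphs and then to solve it by a local exchange argument modeled on the spanning-tree case, \cref{prop: STP is rhombus}. Writing $\Delta(G)$ for the family of complete subsets $S\subseteq[n]$ with $|S|\ge 2$, the vertex $c_G$ is exactly the indicator vector of $\Delta(G)$, so $c_{G_1}+c_{G_2}=c_{G_1'}+c_{G_2'}$ holds precisely when the families $\Delta(G_1),\Delta(G_2)$ and $\Delta(G_1'),\Delta(G_2')$ have the same union and the same intersection. Reading this off on pairs $S=\{i,j\}$ shows that any witnesses $G_1',G_2'$ must contain every common edge $I\coloneqq E(G_1)\cap E(G_2)$, must keep every common non-edge a non-edge in both, and must partition the disputed edges $E(G_1)\triangle E(G_2)$ between them. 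In particular $E(G_1')\cap E(G_2')=I$ is forced, and since the common cliques of two graphs are exactly the complete sets of their common-edge graph, the intersection condition $\Delta(G_1')\cap\Delta(G_2')=\Delta(G_1)\cap\Delta(G_2)$ becomes automatic. Thus the whole content of the conjecture is this: among the bipartitions of the disputed edges into two chordal interpolating graphs, there is one, different from $\{G_1,G_2\}$, for which the \emph{union} $\Delta(G_1')\cup\Delta(G_2')$ of the clique complexes is unchanged.

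Next I would attack this by a local exchange. As in \cref{prop: STP is rhombus}, the idea is to locate a disputed edge $e\in E(G_1)\setminus E(G_2)$ and a disputed edge $f\in E(G_2)\setminus E(G_1)$ whose swap $G_1'=G_1-e+f$, $G_2'=G_2-f+e$ keeps both graphs chordal and preserves the clique-union, balancing the cliques destroyed by deleting $e$ from $G_1$ against those created by inserting $e$ into $G_2$, and symmetrically for $f$. To find such a pair I would exploit the chordal structure---perfect elimination orderings, clique trees, and minimal separators---and argue by induction on the number of disputed edges (or on $n$), peeling off a simplicial vertex on which $G_1$ and $G_2$ agree. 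The already-established \cref{thm:split edges} handles the generic ``split'' configurations and would serve as the base of such an induction; the tree case \cref{thm: CIMG is rhombus} is the instance where $\Delta(G)$ reduces to the edge set and no higher cliques interfere.

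The main obstacle is exactly the interference of higher cliques, which has no analogue in the spanning-tree proof. Deleting or inserting a single edge simultaneously changes every clique through that edge, so a swap that looks harmless on the $1$-skeleton can (i) destroy chordality---both adding and removing edges can create an induced cycle---or (ii) break the clique-union, either by splitting the edges of some clique $S$ between $G_1'$ and $G_2'$ so that $S$ survives in neither, or because the union graph $(V,E(G_1)\cup E(G_2))$ already completes a set $S$ that is a clique in neither $G_1$ nor $G_2$. I expect the heart of the proof to be a structural dichotomy: either a balanced, chordality-preserving swap exists, or the entanglement of the disputed edges lets one assign weights to the complete sets so that the resulting cost vector is maximized on $\CGP_n$ exactly along $\conv(c_{G_1},c_{G_2})$, certifying that the pair is a genuine edge. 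Making the two bookkeeping constraints---chordality and the clique-union---hold simultaneously for a single explicit move is the delicate point, and is presumably why the statement is still only a conjecture.
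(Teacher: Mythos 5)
The statement you set out to prove is \cref{conj:chordal}: it is an \emph{open conjecture} in the paper, not a theorem. The paper establishes only partial results in its direction, namely \cref{prop: subgraph rhombus criterion} (nested chordal graphs), \cref{thm:split edges} (pairs of split graphs, hence asymptotically almost all pairs), and computational verification for $n\le 6$, so there is no proof of the full statement to compare yours against. Your proposal is likewise not a proof, as you yourself concede: the ``structural dichotomy'' at its heart---either a balanced chordality-preserving swap exists, or the obstruction can be converted into an edge-certifying cost function---is exactly the unresolved content of the conjecture, so no gap has actually been closed. (The second half of your dichotomy is, for what it is worth, the strategy the paper carries out for split graphs: the cost functions $w_{S_H,S_G}$ and the green/purple edge colouring in Case IIIc of \cref{thm:split edges} are precisely edge certificates built from the failure of a redistribution.)

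Beyond that, there is a concrete flaw in your proposed atomic move. Your reformulation in the first paragraph (same union and same intersection of the clique families, with the intersection condition automatic once the common edges are forced) is correct and is essentially the paper's \cref{lem: compatible witnesses}; but the paper proves more: any witnesses must redistribute whole \emph{connected components} of the graph $\Delta(G_1,G_2)$ (whose nodes are the sets on which the imsets differ, with adjacency when the intersection also differs), never arbitrary subsets of disputed edges. Your single-pair swap $G_1'=G_1-e+f$, $G_2'=G_2-f+e$ is modeled on \cref{prop: STP is rhombus}, where every vertex has exactly $n-1$ edges and exchanges must be balanced; witnesses in $\CGP_n$ need not preserve the edge count of either graph. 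Already in the nested case $G_1\subsetneq G_2$ of \cref{prop: subgraph rhombus criterion} the set $E(G_1)\setminus E(G_2)$ is empty, so your move cannot even be initiated, yet witnesses exist whenever $\Delta(G_1,G_2)$ has at least two components: one assigns a whole component to one witness and the remaining components to the other. Any viable exchange argument must therefore take components of $\Delta(G_1,G_2)$ (equivalently, the chordally compatible sets $\mathcal{S}\subseteq\mathcal{M}$ of \cref{lem: compatible witnesses}) as its basic unit, and the real difficulty---which your sketch defers and the paper resolves only for split graphs---is deciding when some nontrivial bipartition of those components yields two chordal graphs creating no new cliques.
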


The rest of this section is devoted to provide evidence that \cref{conj:chordal} is true, with the strongest evidence showing that almost all pairs of chordal graphs fulfill the rhombus criterion (see \cref{thm:split edges}). 
In \cref{sec: computations} we provide a way to check that the above conjecture is true for $n\leq 6$. 
A first indication that the rhombus criterion might hold for this polytope is that it is a generalization of the spanning tree polytope that does fulfill the rhombus criterion as shown in \cref{prop: STP is rhombus}. 
\begin{proposition}
The spanning tree polytope $T_n$ is a face of $\CGP_n$. 
\end{proposition}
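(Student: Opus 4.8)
The plan is to realize $T_n$ as the face of $\CGP_n$ cut out by a single linear functional. The starting observation is that every tree $T=([n],E)$ is chordal, since a forest has no cycles at all and hence no induced cycle of length $\geq 3$, so each $c_T$ is one of the generators of $\CGP_n$. First I would record the shape of these generators: for a tree $T$ and a set $S$ with $|S|\geq 3$, the restriction $T|_S$ cannot be complete, because a complete graph on three or more vertices contains a triangle while $T$ is acyclic. Hence $c_T(S)=1$ exactly when $|S|=2$ and $S\in E$, so $c_T$ is supported on the two-element coordinates and there agrees with $v_T$ under the identification $e_{ij}\leftrightarrow e_{\{i,j\}}$.

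Next I would choose the cost functional $w$ defined by $w(S)=1$ when $|S|=2$ and $w(S)=-M$ when $|S|\geq 3$, for a constant $M>\binom{n}{2}$. For any chordal graph $G$ this gives
\[
\langle w, c_G\rangle = |E(G)| - M\cdot\#\{S : |S|\geq 3,\ G|_S\text{ complete}\}.
\]
The key combinatorial step is to show this is maximized precisely at the spanning trees. If $G$ contains a complete induced subgraph on $\geq 3$ vertices, then $\langle w, c_G\rangle\leq \binom{n}{2}-M<0$. Otherwise $G$ is triangle-free, and a triangle-free chordal graph has no cycles at all, since a shortest cycle is chordless and chordality forces every induced cycle to be a triangle; thus $G$ is a forest. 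A forest on $[n]$ has at most $n-1$ edges, with equality if and only if it is a spanning tree, so $\max_G\langle w, c_G\rangle=n-1$, attained exactly by the trees.

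Finally, the face $F=\{x\in\CGP_n : \langle w, x\rangle = n-1\}$ is the convex hull of the generators attaining the maximum, so $F=\conv(c_T : T\text{ a tree on }[n])$. Since every generator of $F$, and therefore every point of $F$, has all coordinates indexed by sets of size $\geq 3$ equal to zero, $F$ lies in the coordinate subspace that we identify with $\mathbb{R}^{\binom{[n]}{2}}$; under this identification the first paragraph gives $c_T\mapsto v_T$, whence $F=T_n$. This exhibits $T_n$ as a face of $\CGP_n$. I expect the only delicate point to be the combinatorial lemma that chordal together with triangle-free forces a forest, combined with the quantitative choice of $M$ that guarantees graphs containing a triangle never tie the maximum; the remaining ingredient, that a face equals the convex hull of the vertices maximizing a functional, is routine.
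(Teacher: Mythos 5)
Your proposal is correct and follows essentially the same route as the paper: both use the cost function that is $1$ on two-element sets and a large negative constant (the paper takes $-n^2$, you take $-M$ with $M>\binom{n}{2}$) on larger sets, and both rely on the observation that a chordal graph with a cycle contains a triangle, so maximizers are forests and hence spanning trees. Your write-up is slightly more explicit about the coordinate identification of the optimal face with $T_n$, but the argument is the same.
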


\begin{proof}
It is enough to find a cost function maximizing exactly in $T_n$. 
Consider the face given by maximizing the following cost function: 
\[
c(S)=\begin{cases}
1 & \text{ if } |S| = 2\\
-n^2 & \text{ if } |S| \geq 3.
\end{cases}
\]
As we maximize over all chordal graphs, if we have a cycle in $G$, then we must have a triangle in $G$ and hence the negative weight $-n^2$. 
As the positive weights of $c$ sum to $\binom{n}{2}$, we then get $\langle c, c_G\rangle < 0$. 
Thus any graph maximizing $c$ is a forest. 
Then it is direct that for any forest $G=([n], E)$ we have $\langle c, c_G\rangle = |E|$. 
This is maximized exactly when $|E|=n-1$, that is $G$ is a tree. 
\end{proof}

It is direct from definition that if $S\subseteq T$ for some $|S|\geq 2$, then if $c_G(T)=1$ we have $c_G(S)=1$, as induced subgraphs of complete graphs are complete graphs.

Let $G$ and $H$ be two graphs. 
Define the graph $\Delta(G, H)$ as having vertices $S\subseteq [n]$, $|S|\geq 2$, 
such that $c_G(S)\neq c_H(S)$. 
We have an edge  $ST\in \Delta(G,H)$ if and only if $S\cap T\in \Delta(G, H)$.
We will show that $\Delta(G, H)$ is a useful tool from which we can show many cases of the rhombus criterion. 
Note however, that most theorems hold for non-chordal graphs as well.

\begin{lemma}
Let $S$ and $T$ be two neighboring nodes of $\Delta(G, H)$. 
Then $c_G(S)=c_G(T)$ and $c_H(S)=c_H(T)$. 
\end{lemma}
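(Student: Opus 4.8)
The plan is to reduce everything to the monotonicity property recorded just before the lemma: whenever $U\subseteq S$ with $|U|\geq 2$, having $c_G(U)=0$ forces $c_G(S)=0$ (equivalently, $c_G(S)=1$ forces $c_G(U)=1$), and likewise for $c_H$. First I would set $U=S\cap T$. Since $S$ and $T$ are neighboring nodes of $\Delta(G,H)$, the defining edge condition gives $U\in\Delta(G,H)$, so in particular $|U|\geq 2$ and $c_G(U)\neq c_H(U)$.

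Because $c_G(U)\neq c_H(U)$, up to interchanging the roles of $G$ and $H$ I may assume $c_G(U)=1$ and $c_H(U)=0$; this is legitimate since both the hypotheses and the conclusion are symmetric in $G$ and $H$. Now $U\subseteq S$ and $U\subseteq T$, so applying the monotonicity property to $c_H$ yields $c_H(S)=0$ and $c_H(T)=0$. Since $S$ and $T$ are themselves nodes of $\Delta(G,H)$ we have $c_G(S)\neq c_H(S)$ and $c_G(T)\neq c_H(T)$, and as all these values lie in $\{0,1\}$ this forces $c_G(S)=1$ and $c_G(T)=1$. Hence $c_G(S)=c_G(T)=1$ and $c_H(S)=c_H(T)=0$, which is exactly the claim.

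The argument is short, so there is no real computational obstacle; the only point requiring care is to invoke monotonicity in the correct direction. The property propagates the value $0$ upward from a subset to its supersets (and the value $1$ downward), and it is crucial that the common disagreement value $c_H(U)=0$ sits on the intersection $U=S\cap T$, which lies below both $S$ and $T$, so that it can pin down $c_H$ on each of them. In this sense the edge condition $S\cap T\in\Delta(G,H)$ is doing all the work: it is precisely what guarantees that the smaller set $U$ still distinguishes $G$ from $H$, allowing the disagreement to be transported up to $S$ and $T$ simultaneously and thereby synchronizing their values under both $c_G$ and $c_H$.
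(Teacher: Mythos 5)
Your proof is correct and uses exactly the same ingredients as the paper's: the monotonicity of the characteristic imset (value $1$ passes to subsets, equivalently value $0$ passes to supersets) together with the edge condition $S\cap T\in\Delta(G,H)$. The only difference is cosmetic---you argue directly by anchoring the disagreement at $U=S\cap T$ and pushing the $0$ upward, whereas the paper assumes $c_G(T)\neq c_G(S)$ for contradiction and pushes the two $1$'s down to the intersection---so this is essentially the paper's argument rearranged.
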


\begin{proof}
Since $S\in\Delta$ we can by symmetry assume that $c_G(S)=1$ and $c_H(S)=0$. 
If we do not have $c_G(T)=1$ we have $c_G(T)=0$, and thus $c_H(T)=1$. 
However, as $c_G(S)=1$, this gives us $c_G(S\cap T)=1$. 
Similarly $c_H(T)=1$ gives us $c_H(S\cap T)=1$, thus  $S\cap T\notin\Delta(G,H)$.
\end{proof}
It follows that $c_G$ and $c_H$ are constant on components of $\Delta(G, H)$. 

\begin{corollary}
\label{cor: delta constant on components}
For every component $T$ of $\Delta(G, H)$ we have either $c_G(S)=1$ for every $S\in T$, or $c_H(S)=1$ for every $S\in T$. 
\end{corollary}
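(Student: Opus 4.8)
The plan is to upgrade the preceding lemma from an edge-local statement to a component-global one by a straightforward connectivity argument, and then to use the defining property of $\Delta(G,H)$ to convert constancy into the claimed dichotomy. The whole proof rests on the observation that the lemma already pins down both $c_G$ and $c_H$ across every single edge; propagating this along paths does the rest.

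First I would fix a connected component $T$ of $\Delta(G,H)$ and recall that, by the very definition of $\Delta(G,H)$, every node $S\in T$ satisfies $c_G(S)\neq c_H(S)$. Since the characteristic imset takes values in $\{0,1\}$, this forces $(c_G(S),c_H(S))$ to equal either $(1,0)$ or $(0,1)$ for each such $S$. This is the key structural constraint that will rule out the undesired mixed behavior once constancy is established.

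Next I would show that $S\mapsto c_G(S)$, and likewise $S\mapsto c_H(S)$, is constant on $T$. Take any two nodes $S,S'\in T$; since $T$ is connected there is a path $S=S_0, S_1,\dots, S_k=S'$ in $\Delta(G,H)$. Each consecutive pair $S_i, S_{i+1}$ forms an edge, so the preceding lemma gives $c_G(S_i)=c_G(S_{i+1})$ and $c_H(S_i)=c_H(S_{i+1})$. Chaining these equalities along the whole path yields $c_G(S)=c_G(S')$ and $c_H(S)=c_H(S')$, so both coordinate functions are indeed constant on $T$.

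Finally I would combine the two observations: on $T$ the pair $(c_G,c_H)$ is constant and, by the first step, equal to one of $(1,0)$ or $(0,1)$. In the first case $c_G(S)=1$ for every $S\in T$, and in the second $c_H(S)=1$ for every $S\in T$, which is exactly the stated alternative. I do not anticipate any genuine obstacle here, as the corollary is essentially a repackaging of the lemma; the only point deserving care is the passage from an edge-wise equality to constancy on an entire component, and that is precisely what the path-chaining argument above supplies.
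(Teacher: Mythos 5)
Your proof is correct and follows essentially the same route as the paper: the paper derives the corollary from the preceding lemma via exactly this observation that $c_G$ and $c_H$ are therefore constant on components, combined with the fact that $c_G(S)\neq c_H(S)$ for every node of $\Delta(G,H)$. Your explicit path-chaining argument simply spells out the step the paper leaves implicit.
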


We let $\mathcal{M}$ be the set of components in $\Delta(G, H)$. 
Note that for every $M\in\mathcal{M}$ we have $c_G(S)=c_G(T)$ for $S, T\in M$. 
Let $\mathcal{S}\subseteq \mathcal{M}$ and consider the imset 
\[
c_{\mathcal{S}}(S) = \begin{cases}
1&\text{ if } c_G(S)= c_H(S)=1,\\
0&\text{ if } c_G(S)= c_H(S)=0,\\
1&\text{ if } S\in T \text{ for some } T\in\mathcal{S}\text{, and }\\
0 &\text{ otherwise.}
\end{cases}
\]
Note that $c_\mathcal{S}$ agrees with $c_G$ and $c_H$ outside of $\Delta(G, H)$. 
Small examples show that $c_{\mathcal{S}}$ need not be the characteristic imset of any graph, but it can happen. 
If there is a graph $D$ such that $c_D=c_{\mathcal{S}}$ we say that $\mathcal{S}$ is \emph{compatible}, moreover if $D$ is chordal we will call  $\mathcal{S}$ \emph{chordally compatible}. 
We see that there always exists at least two compatible sets, namely $\mathcal{S}=\{T\in\mathcal{M}\colon c_G|_T=1\}$, as we get $c_{\mathcal{S}}=c_G$, and $\mathcal{S}=\{T\in\mathcal{M}\colon c_H|_T=1\}$, as $c_{\mathcal{S}}=c_G$. 
Let us consider all vertices that come from (chordally) compatible sets, 
\[
C_{G, H}\coloneqq \conv(c_{G,\mathcal{S}}\colon \mathcal{S}\subseteq\mathcal{M} \text{ is chordally compatible with }G). 
\]
This is in fact a face of $\CGP_n$. 
\begin{lemma}
\label{lem: compatible face}
Let $G$ and $H$ be two chordal graphs and let $\mathcal{M}$ be the set of connected components in $\Delta(G, H)$. 
Then $C_{G, H}$ is a face of $\operatorname{CGP}_n$.
\end{lemma}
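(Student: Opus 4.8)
The plan is to exhibit an explicit cost function $c$ whose maximizing face is exactly $C_{G,H}$. The guiding intuition comes from \cref{cor: delta constant on components}: on each component $M\in\mathcal M$ the two given imsets $c_G$ and $c_H$ are constant, and exactly one of them is $1$ there while the other is $0$. Thus a compatible set $\mathcal S$ amounts to independently choosing, for each component, whether to "turn it on" (set its values to $1$) or "turn it off" (set its values to $0$). Outside $\Delta(G,H)$ every chordal graph appearing in $C_{G,H}$ agrees with the fixed pattern where $c_G=c_H$. So I want a linear functional that is indifferent to the on/off choice within each component but strictly penalizes any deviation from the common pattern of $c_G$ and $c_H$ outside $\Delta(G,H)$.

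Concretely, I would build $c$ as a sum of two pieces. First, a large-magnitude piece that forces agreement with $c_G=c_H$ on the coordinates outside $\Delta(G,H)$: for each $S\notin\Delta(G,H)$ set $c(S)=\kappa$ if $c_G(S)=1$ and $c(S)=-\kappa$ if $c_G(S)=0$, for a suitably large constant $\kappa$. Any chordal imset that disagrees with the common value at even one such coordinate loses at least $\kappa$ relative to the optimum, so for $\kappa$ large enough (larger than the total variation available from all the $\Delta$-coordinates, e.g.\ $\kappa>2^n$) no maximizer can deviate there. Second, on the coordinates inside $\Delta(G,H)$ I want the functional to be completely flat, so I set $c(S)=0$ for every $S\in\Delta(G,H)$. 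Then $\langle c, c_D\rangle$ depends only on whether $c_D$ matches the fixed external pattern, and among graphs that do match, every choice of on/off pattern on the components yields the same value, so all of them are maximizers.

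The remaining task is to check that the maximizing vertices are exactly the $c_{G,\mathcal S}$ for $\mathcal S$ chordally compatible. One direction is immediate: each such $c_{G,\mathcal S}$ equals $c_D$ for some chordal $D$ by definition of chordal compatibility, agrees with the external pattern by construction (recall $c_{\mathcal S}$ agrees with $c_G$ and $c_H$ outside $\Delta$), and hence attains the maximal value. For the converse, suppose $c_D$ is a vertex of $\CGP_n$ maximizing $c$. By the $\kappa$-argument $c_D$ agrees with $c_G$ (equivalently $c_H$) on all coordinates outside $\Delta(G,H)$. Define $\mathcal S=\{M\in\mathcal M\colon c_D|_M=1\}$; I must verify that $c_D$ is constant on each component $M$, so that $\mathcal S$ is well defined and $c_D=c_{\mathcal S}$. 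This constancy is the crux: it should follow from the defining edge relation of $\Delta(G,H)$ together with the closure property that $S\subseteq T$ and $c_D(T)=1$ force $c_D(S)=1$, mirroring the argument in the lemma preceding \cref{cor: delta constant on components} but now applied to $c_D$ rather than to $c_G,c_H$.

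The main obstacle I anticipate is precisely this last constancy claim: a priori a chordal graph $D$ agreeing with the external pattern could split a component of $\Delta(G,H)$, taking value $1$ on part of it and $0$ on the rest, in which case $c_D$ would not be of the form $c_{\mathcal S}$ at all. Ruling this out requires understanding how the adjacency in $\Delta(G,H)$ (edges $ST$ whenever $S\cap T\in\Delta(G,H)$) interacts with the monotonicity of characteristic imsets under the subset relation. I would argue that if $S,T$ are adjacent in $\Delta(G,H)$ and $c_D$ differs on them, then tracing through $S\cap T$ using the closure property yields a contradiction with $c_D$ agreeing with $c_G=c_H$ outside $\Delta(G,H)$, exactly as in that earlier lemma. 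If instead this constancy can genuinely fail, the honest fix is to enlarge the cost function so that it also breaks ties against split patterns, but I expect the clean version to go through and that $C_{G,H}$ is indeed cut out by the functional above.
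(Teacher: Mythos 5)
Your overall strategy—first force agreement with the common pattern outside $\Delta(G,H)$ by a heavily weighted functional, then deal with $\Delta(G,H)$ itself—matches the paper's first step (the paper's functional $W$ plays the role of your $\pm\kappa$ piece). However, the "crux" you flagged is a genuine gap, and it sinks the proof as written: the constancy claim is false, so the functional that is flat on $\Delta(G,H)$ does \emph{not} cut out $C_{G,H}$. Concretely, take $G=K_3$ and $H$ the empty graph on $\{1,2,3\}$ (both chordal). Every coordinate $S$ with $|S|\ge 2$ lies in $\Delta(G,H)$, and $\Delta(G,H)$ is a single connected component: each $2$-set is adjacent to $\{1,2,3\}$ (their intersection is the $2$-set itself), while the $2$-sets are pairwise non-adjacent (their intersections are singletons). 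Since $\mathcal{M}$ has one element, the only compatible sets are $\emptyset$ and $\mathcal{M}$, so $C_{G,H}=\conv(c_G,c_H)$. But your functional is identically zero here (there are no coordinates outside $\Delta(G,H)$, and you set the rest to $0$), so its maximizing face is all of $\CGP_3$. The graph $D$ consisting of the single edge $12$ is chordal, vacuously agrees with the external pattern, and splits the component: $c_D(\{1,2\})=1$ while $c_D(\{1,2,3\})=0$, even though these two nodes are adjacent in $\Delta(G,H)$. The reason the argument behind \cref{cor: delta constant on components} does not transfer to a single graph is that it plays $c_G$ and $c_H$ against each other: from $c_G(S)=1$ and $c_H(T)=1$ one deduces $c_G(S\cap T)=c_H(S\cap T)=1$, contradicting $S\cap T\in\Delta(G,H)$. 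For one graph, $c_D(S)=1$ and $c_D(T)=0$ only yield $c_D(S\cap T)=1$ by monotonicity, which is no contradiction, as the example shows.

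The fallback you mention—"enlarge the cost function so that it also breaks ties against split patterns"—is not a minor patch; it is the actual content of the paper's proof. Inside the face both you and the paper cut out first, the paper introduces, for each inclusion-maximal $M$ in a component, the functional $w_M$ with $w_M(M)=1$ and $w_M(S)=-1/(m_M-1)$ on the $m_M-1$ proper subsets $S$ of $M$ lying in $\Delta(G,H)$ (the set $\downarrow M\setminus\{M\}$ in the paper's notation). Then $\langle w_M,c_D\rangle\le 0$ for every graph $D$, with equality if and only if $c_D(M)=1$ (which by monotonicity forces value $1$ on all of $\downarrow M$) or $c_D$ vanishes on $\downarrow M$; in the example above this assigns the single-edge graph the value $-1/3$ and excludes it. Summing the $w_M$ over the inclusion-maximal elements of a component and using connectivity of the component (neighboring maximal elements $M,M'$ share the node $M\cap M'\in\Delta(G,H)$) shows that the only optimal patterns on a component are all-$0$ and all-$1$, which is exactly the constancy you needed; \cref{cor: delta constant on components} then identifies these patterns with $c_G|_T$ and $c_H|_T$. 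So your proposal is incomplete precisely where you suspected, and completing it requires constructing these per-component tie-breaking functionals rather than the flat functional you proposed.
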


\begin{proof}
First we consider the cost function 
\[
W(S)=\begin{cases}
1 & \text{ if } c_G(S)=c_H(S)=1,\\
-1 & \text{ if } c_G(S)=c_H(S)=0,\\
0&\text{ otherwise.}
\end{cases}
\]
Then maximizing $W$ are exactly the characteristic imsets $c_D$ such that $G\cap H\subseteq D\subseteq G\cup H$. 
Notice that this includes all graphs defined by $c_{G,\mathcal{S}}$ where $\mathcal{S}$ is compatible with $G$. 
As faces of faces are faces we can henceforth restrict out focus to only this face and these graphs. 

Let $M$ be an inclusion maximal element in $\Delta(G, H)$, $\downarrow M$ denote the subsets of $M$ (including $M$) in $\Delta(G, H)$, and define $m_M=|\downarrow M|$.
If $m_M> 1$ we let 
\[
w_M(S)= \begin{cases}
1 & \text{ if } S=M,\\
\frac{-1}{m_M-1} & \text{ if } S\in \downarrow M\setminus\{M\},\\
0&\text{ otherwise.}
\end{cases}
\]
Otherwise we just set $w_M(S)=0$ for all $S$. 
Note that for any graph $D$ we have $\langle w_M, c_D\rangle\leq 0$ with equality if and only of $c_D(M)=1$ or $c_D(S)=0$ for all $S\in \downarrow M$. 
That is, for all $c_D|_{\downarrow M}$ is either $c_G|_{\downarrow M}$ or $c_H|_{\downarrow M}$. 

Take an element $T\in \mathcal{M}$ and define $w_T=\sum w_M$ where we sum over all inclusion maximal elements $M\in T$. 
Let $M, M'\in T$ be two inclusion maximal neighbors. 
If $D$ is a graph such that $c_G(M)=0$ and $c_G(M')=1$ then we get that $\langle w_M, c_D\rangle \leq w_M(M\cap M') c_D(M\cap M')=\frac{-1}{m_M-1}$ which is strictly less than $0$ if $m_M> 1$. 
However, this must be the case as $M$ was inclusion maximal with an edge to $M'$. 
Thus we have that $\langle w_T, c_D \rangle = 0$ if and only if $c_D|_T\equiv 0$ or $c_D|_T\equiv 1$. 
Then by \cref{cor: delta constant on components} gives us that $c_D|_T$ is either $c_G|_T$ or $c_G|_T$. 
Note that this is the same as $D$ being given by $c_{\mathcal{S}}$ for some $\mathcal{S}\subseteq \mathcal{M}$ such that $T\in \mathcal{S}$. 
\end{proof}

Then we can rephrase that $\CGP_n$ fulfills the rhombus criterion as follows. 

\begin{lemma}
\label{lem: compatible witnesses}
Let $G$ and $H$ be two chordal graphs and let $\mathcal{M}$ be the set of connected components of $\Delta(G, H)$. 
Then $c_G$, $c_H$ have witnesses of the rhombus criterion if and only if there does exist a $\mathcal{S}\subseteq \mathcal{M}$ such that $\mathcal{S}$ and $\mathcal{M}\setminus\mathcal{S}$ are chordally compatible and $c_\mathcal{S}$ is not $c_G$ or $c_H$. 
\end{lemma}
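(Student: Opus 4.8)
The plan is to translate the rhombus criterion for the pair $c_G, c_H$ into a statement about how a pair of $0/1$ witnesses can distribute the coordinates lying in $\Delta(G,H)$. The starting observation is purely arithmetic: if $\alpha', \beta'$ are witnesses then $\alpha' + \beta' = c_G + c_H$, and since all four vectors are $0/1$, comparing coordinates forces $\alpha'(S) = \beta'(S) = c_G(S) = c_H(S)$ whenever $S \notin \Delta(G,H)$, while on each $S \in \Delta(G,H)$ (where $c_G(S) + c_H(S) = 1$) we must have $\alpha'(S) + \beta'(S) = 1$, i.e. exactly one of the witnesses takes the value $1$. Thus both witnesses automatically agree with $c_G$ and $c_H$ outside $\Delta(G,H)$ and are complementary on it; this is precisely the shape of the vectors $c_\mathcal{S}$, so the whole lemma reduces to matching up the two complementary halves with a splitting $\mathcal{M} = \mathcal{S} \cup (\mathcal{M}\setminus\mathcal{S})$.

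For the forward direction the one nontrivial point is to show that each witness is constant on every connected component of $\Delta(G,H)$, which is what lets us read off the set $\mathcal{S}$. I would prove this by mimicking the proof of \cref{cor: delta constant on components}: take neighbouring nodes $S, T$ of $\Delta(G,H)$, so that $S\cap T \in \Delta(G,H)$ as well, and suppose $\alpha'(S) = 1$. Monotonicity of the witness $\alpha'$ (a characteristic imset, hence equal to $1$ on every subset of size $\geq 2$ of a set where it is $1$) gives $\alpha'(S\cap T) = 1$, whence $\beta'(S\cap T) = 0$ by complementarity. If $\alpha'(T)$ were $0$ then $\beta'(T) = 1$, and monotonicity of $\beta'$ would force $\beta'(S\cap T) = 1$, a contradiction; so $\alpha'(T) = 1$. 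By symmetry $\alpha'(S) = \alpha'(T)$, hence $\alpha'$ (and likewise $\beta'$) is constant on components. Setting $\mathcal{S} := \{T\in\mathcal{M} : \alpha' \equiv 1 \text{ on } T\}$ then yields $\alpha' = c_\mathcal{S}$ and $\beta' = c_{\mathcal{M}\setminus\mathcal{S}}$ directly from the definition of $c_\mathcal{S}$; since the witnesses are vertices of $\CGP_n$ they are characteristic imsets of chordal graphs, so both $\mathcal{S}$ and $\mathcal{M}\setminus\mathcal{S}$ are chordally compatible, and $c_\mathcal{S} = \alpha' \notin \{c_G, c_H\}$ because witnesses lie in $V\setminus\{c_G,c_H\}$. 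This ``constant on components'' step is the main obstacle, in the sense that it is the only place where the combinatorics of $\Delta(G,H)$, rather than bare arithmetic, is used.

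For the converse I would simply set $\alpha' := c_\mathcal{S}$ and $\beta' := c_{\mathcal{M}\setminus\mathcal{S}}$ and verify the three requirements of the rhombus criterion. A coordinatewise case check ($c_G = c_H = 1$, then $c_G = c_H = 0$, then $S$ lying in a component of $\mathcal{S}$ versus of $\mathcal{M}\setminus\mathcal{S}$) gives $\alpha' + \beta' = c_G + c_H$ immediately from the definition of $c_\mathcal{S}$. Chordal compatibility of both $\mathcal{S}$ and $\mathcal{M}\setminus\mathcal{S}$ means $\alpha'$ and $\beta'$ are characteristic imsets of chordal graphs, hence $0/1$ points of $\CGP_n$ and therefore vertices. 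Finally $\alpha' = c_\mathcal{S} \notin\{c_G,c_H\}$ is given, and $\beta'\notin\{c_G,c_H\}$ follows for free from the sum identity: were $\beta' = c_G$ (respectively $c_H$) then $\alpha' = c_H$ (respectively $c_G$), contradicting the hypothesis on $c_\mathcal{S}$. Thus $\alpha', \beta'$ are genuine witnesses and the equivalence is established.
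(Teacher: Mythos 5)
Your proof is correct, but it takes a genuinely different route from the paper's. The paper handles the forward direction with its face machinery: by the argument of \cref{prop: face of rhombus criterion}, any witnesses for $c_G$, $c_H$ must lie in every face containing both, and by \cref{lem: compatible face} the face $C_{G,H}$ has precisely the vertices $c_{\mathcal{S}}$ with $\mathcal{S}$ chordally compatible; the identity $c_{\mathcal{S}}+c_{\mathcal{M}\setminus\mathcal{S}}=c_G+c_H$ then settles both directions at once. You bypass \cref{lem: compatible face} (and its cost-function constructions) entirely: the $0/1$ arithmetic shows witnesses agree with $c_G$, $c_H$ off $\Delta(G,H)$ and are complementary on it, and you then prove directly---by adapting the monotonicity argument behind \cref{cor: delta constant on components} from the pair $c_G, c_H$ to the witness pair---that each witness is constant on every component of $\Delta(G,H)$, hence equals some $c_{\mathcal{S}}$; chordal compatibility of $\mathcal{S}$ and $\mathcal{M}\setminus\mathcal{S}$ then comes for free from the witnesses being vertices of $\CGP_n$. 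What the paper's route buys is brevity given machinery it needs anyway ($C_{G,H}$ reappears in \cref{cor: 4 vertices} and throughout \cref{thm:split edges}); what your route buys is self-containedness and, as a by-product, an independent proof of the structural fact that any witness pair for $c_G$, $c_H$ in $\CGP_n$ must be of the form $c_{\mathcal{S}}$, $c_{\mathcal{M}\setminus\mathcal{S}}$, obtained without exhibiting any supporting cost function. Both arguments ultimately rest on the same two ingredients---monotonicity of characteristic imsets of undirected graphs and the sum identity---so the equivalence itself is the same, but your forward direction is a legitimately independent derivation rather than a citation of the face structure.
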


\begin{proof}
The proof of \cref{prop: face of rhombus criterion} it is enough to search after witnesses in any face containing $c_G$ and $c_H$. 
Thus by \cref{lem: compatible face} is enough to consider all vertices $c_{\mathcal{S}}$ such that $\mathcal{S}\subseteq\mathcal{M}$ is compatible with $G$. 
Moreover, for any $\mathcal{S}$ we have $c_{\mathcal{S}} + c_{\mathcal{M}\setminus\mathcal{S}} = c_G+c_H$. 
Thus we have witnesses to the rhombus criterion if and only if there is some $\mathcal{S}$ and $\mathcal{M}\setminus\mathcal{S}$ is chordally compatible and $c_\mathcal{S}$ is not $c_G$ or $c_H$. 
\end{proof}

\begin{corollary}
\label{cor: 4 vertices}
If there are 4 of fewer chordally compatible sets for $G$ and $H$, then either $c_G$, $c_H$ fulfill the rhombus criterion. 
\end{corollary}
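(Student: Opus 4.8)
The plan is to reduce the whole statement to the face $C_{G,H}$ and then exploit that this face is, up to an affine map, a subpolytope of a cube. First I would invoke \cref{lem: compatible witnesses} together with \cref{prop: face of rhombus criterion} and \cref{thm: edge of face}: it suffices to decide the rhombus criterion for the pair $c_G,c_H$ \emph{inside} $C_{G,H}$, whose vertices are exactly the points $c_\mathcal{S}$ for chordally compatible $\mathcal{S}$. By hypothesis there are at most four such vertices, and they are genuinely distinct vertices of $C_{G,H}$ since distinct $0/1$-vectors are always in convex position (each is extreme in $[0,1]^{2^n-n-1}$).

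Next I would make the cube structure explicit. Since the connected components of $\Delta(G,H)$ partition the disputed coordinates, and $c_\mathcal{S}$ agrees with $c_G=c_H$ off $\Delta(G,H)$ while being the indicator of $\bigcup_{T\in\mathcal{S}}T$ on $\Delta(G,H)$, one has $c_\mathcal{S}=b+\sum_{T\in\mathcal{S}}\chi_T$ where $\chi_T\coloneqq\sum_{S\in T}e_S$ have pairwise disjoint support. Hence $\mathcal{S}\mapsto c_\mathcal{S}$ is the restriction of an injective affine map, carrying each $\one_\mathcal{S}\in\{0,1\}^\mathcal{M}$ to the vertex $c_\mathcal{S}$. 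Crucially, by \cref{cor: delta constant on components} the two always-present compatible sets are complementary, so $c_G$ and $c_H$ correspond to the \emph{antipodal} cube vertices $v$ and $\one-v$.

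Then I would run the case analysis on the number $k\le 4$ of vertices. If $k\le 3$ the face is a point, segment, or triangle, and $c_G,c_H$ is automatically an edge, so the criterion holds; likewise if $k=4$ and the four points are affinely independent, the face is a simplex and every pair is an edge. The only remaining case is $k=4$ with the four points coplanar, forming a convex quadrilateral. If $c_G,c_H$ is a side we are done, so assume it is a diagonal, i.e.\ a non-edge; then the remaining vertices $c_{\mathcal{S}_1},c_{\mathcal{S}_2}$ form the crossing diagonal, so the two open segments meet at an interior point $X$.

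The heart of the argument, and the step I expect to be the only real obstacle, is to show that in this last case $\mathcal{S}_2=\mathcal{M}\setminus\mathcal{S}_1$, which is precisely what \cref{lem: compatible witnesses} needs in order to produce witnesses. I would argue this in the cube: writing $X=(1-t)v+t(\one-v)$ with $t\in(0,1)$, every coordinate of $X$ equals $t$ or $1-t$ and therefore lies strictly in $(0,1)$. But $X$ also lies on the segment between the $0/1$-vectors $\one_{\mathcal{S}_1}$ and $\one_{\mathcal{S}_2}$, so on any coordinate where these agree $X$ would be $0$ or $1$, a contradiction. Hence $\one_{\mathcal{S}_1}$ and $\one_{\mathcal{S}_2}$ differ in every coordinate, i.e.\ $\mathcal{S}_2=\mathcal{M}\setminus\mathcal{S}_1$, which gives $c_{\mathcal{S}_1}+c_{\mathcal{S}_2}=c_G+c_H$ with both distinct from $c_G,c_H$. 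These are the required witnesses, so in every case the pair fulfils the rhombus criterion.
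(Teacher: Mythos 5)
Your proposal is correct and follows essentially the same route as the paper: reduce to the face $C_{G,H}$ of \cref{lem: compatible face}, which by hypothesis has at most four vertices, and conclude via \cref{thm: edge of face} and \cref{prop: face of rhombus criterion} that it suffices to note that at most four $0/1$-points form either a simplex or a rhombus. The only difference is that the paper asserts this last geometric fact as known, whereas you prove it in this setting through the cube parametrization $\mathcal{S}\mapsto c_\mathcal{S}$ and the antipodality of $c_G,c_H$ — a legitimate (and self-contained) filling-in of the step the paper leaves implicit, not a different argument.
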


\begin{proof}
Any 4 (or fewer) 0/1-vectors are either a rhombus or a simplex, both fulfill the rhombus criterion. 
By assumption the face of \cref{lem: compatible face} must then fulfill the rhombus criterion and by \cref{prop: face of rhombus criterion} the result follows. 
\end{proof}

Note that whenever $\Delta(G, H)$ has 2 or fewer components we can immediately use the above corollary. 
Thus the question that we must answer is how does the (chordally) compatible subsets look like. 
Define the graph \emph{induced} by $c_\mathcal{S}$ as the graph $D$ where we have an edge $i,j\in D$ if and only if $c_{\mathcal{S}}(\{i,j\})=1$. 
For convenience, for any component $S$ in $\Delta(G, H)$ we let $E(S)\coloneqq\{T\subseteq S\colon |T|=2\}$, that is, $E(S)$ is the set of edges in $S$. 
Therefore, the graph induced by $c_\mathcal{S}$ can be described as $(G\cap H)\cup\bigcup_{S\in\mathcal{S}}E(S)$. 
We observe the following lemma. 

\begin{lemma}
\label{lem: compatibilty criterions}
Let $G$ and $H$ be two graphs, $\mathcal{S}\subseteq\mathcal{M}$, and define $D$ as the graph induced by $c_\mathcal{S}$.  
If $\mathcal{S}$ is compatible, then $c_\mathcal{S}=c_D$. 
If $\mathcal{S}$ is not compatible, then $D$ has a set $S$ such that $c_D(S)=1$ and $c_{\mathcal{S}}(S)=0$. 
Moreover, if $G$ and $H$ are chordal graphs and $\mathcal{S}$ is not chordally compatible, then either 
\begin{enumerate}
    \item there is a cycle without chord in $D$, or
    \item there is a set $S$ such that $c_D(S)=1$ and $c_{\mathcal{S}}(S)=0$. 
\end{enumerate}
\end{lemma}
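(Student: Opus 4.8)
The plan is to establish a single monotonicity relation, namely that $c_{\mathcal{S}} \le c_D$ holds pointwise, from which all three assertions follow by elementary bookkeeping. Observe first that, by the very definition of the graph $D$ induced by $c_{\mathcal{S}}$, the imsets $c_D$ and $c_{\mathcal{S}}$ agree on every two-element set: $c_D(\{i,j\}) = 1$ exactly when $ij \in D$, which happens exactly when $c_{\mathcal{S}}(\{i,j\}) = 1$. Hence any disagreement between $c_D$ and $c_{\mathcal{S}}$ can only occur on sets of size at least three.

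The key step is to show that $c_{\mathcal{S}}(S) = 1$ forces $c_D(S) = 1$; equivalently, $D|_S$ is complete whenever $c_{\mathcal{S}}(S) = 1$. I would verify this by inspecting the two ways $c_{\mathcal{S}}(S)$ can equal $1$. If $c_G(S) = c_H(S) = 1$, then for every pair $\{i,j\} \subseteq S$ both $G|_S$ and $H|_S$ being complete give $c_G(\{i,j\}) = c_H(\{i,j\}) = 1$, so $c_{\mathcal{S}}(\{i,j\}) = 1$ and $ij \in D$. If instead $S$ lies in a component $T \in \mathcal{S}$, then by \cref{cor: delta constant on components} one of $c_G, c_H$---say $c_G$---is identically $1$ on $T$, so $G|_S$ is complete and each pair $\{i,j\} \subseteq S$ satisfies $c_G(\{i,j\}) = 1$. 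If also $c_H(\{i,j\}) = 1$ then $c_{\mathcal{S}}(\{i,j\}) = 1$ immediately; otherwise $\{i,j\} \in \Delta(G,H)$, and since $\{i,j\} = S \cap \{i,j\} \in \Delta(G,H)$ there is an edge between $S$ and $\{i,j\}$ in $\Delta(G,H)$, placing $\{i,j\}$ in the same component $T \in \mathcal{S}$ and again giving $c_{\mathcal{S}}(\{i,j\}) = 1$. In all cases every pair of $S$ is an edge of $D$, so $D|_S$ is complete and $c_D(S) = 1$.

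Granting this, the three claims drop out. For the first, if $\mathcal{S}$ is compatible with witness $D'$, then $c_{D'} = c_{\mathcal{S}}$ forces the edge set of $D'$ to be $\{ij : c_{\mathcal{S}}(\{i,j\}) = 1\}$, which is precisely the edge set of $D$; hence $D' = D$ and $c_D = c_{\mathcal{S}}$. For the second, if $\mathcal{S}$ is not compatible then $c_D \ne c_{\mathcal{S}}$ (otherwise $D$ itself would witness compatibility); combined with $c_{\mathcal{S}} \le c_D$ and the agreement on pairs, the disagreement must occur at some $S$ with $c_D(S) = 1$ and $c_{\mathcal{S}}(S) = 0$. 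For the ``moreover'', assume $G, H$ chordal and $\mathcal{S}$ not chordally compatible. If $\mathcal{S}$ is not compatible, the second claim yields alternative (2). If $\mathcal{S}$ is compatible, then by the first claim $c_D = c_{\mathcal{S}}$, so $D$ is the unique graph realizing $c_{\mathcal{S}}$; since $\mathcal{S}$ is not chordally compatible, $D$ cannot be chordal, and a non-chordal graph contains a chordless cycle, giving alternative (1).

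I expect the main obstacle to be the second subcase of the key step: arguing that when $S$ sits in a chosen component $T \in \mathcal{S}$, every two-element subset $\{i,j\} \subseteq S$ also ends up in $T$ (or is already common to $G$ and $H$). This is exactly where the edge rule of $\Delta(G,H)$---that $ST$ is an edge iff $S \cap T \in \Delta(G,H)$---must be invoked, using $\{i,j\} = S \cap \{i,j\}$; the remaining bookkeeping is routine.
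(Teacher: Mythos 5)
Your proof is correct and takes essentially the same route as the paper: establish that $c_{\mathcal{S}}\le c_D$ pointwise (i.e., that $c_{\mathcal{S}}(S)=1$ forces $D|_S$ complete), note the agreement on two-element sets, and conclude that the only possible failures of (chordal) compatibility are a chordless cycle in $D$ or a set with $c_D(S)=1$ and $c_{\mathcal{S}}(S)=0$. If anything, your write-up is more complete, since you prove the key monotonicity step via \cref{cor: delta constant on components} and the edge rule of $\Delta(G,H)$, whereas the paper asserts the corresponding downward-closure of $c_{\mathcal{S}}$ only ``by definition.''
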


Note that condition (2) is the only condition for (non chordal) compatibility, thus the chordal case can be seen as a subcase of the general case where we also have to make sure that $D$ is chordal.  

\begin{proof}
We will only show the chordal case as that implicitly includes the non-chordal case. 
It follows from definition that any imset $c$ is the imset of a graph $D'$ the following must hold:
\begin{enumerate}[label=\roman*)]
    \item we have an edge $ij\in D'$ if and only of $c(\{i,j\})=1$, 
    \item for every set $S$ of size 3 or greater, $c(S)=1$ if and only if $S$ is complete in $D'$.
\end{enumerate}
Therefore it follows that $\mathcal{S}$ is compatible, then $c_\mathcal{S}=c_D$. 
By the first condition, if $\mathcal{S}$ is compatible with $G$ then we must have $c_{\mathcal{S}}=c_D$. 
By definition of $c_{\mathcal{S}}$ we have that if $c_{\mathcal{S}}(S)=1$, then $c_{\mathcal{S}}(S')=1$ for all subsets $S'\subseteq S$. 
Thus all that can fail is either the second condition or that $D$ is not chordal. 
If $D$ is not chordal we are done as (1) follows.
Thus if compatibility is to fail, it must be because we have a set $S$ that is complete in $D$, but $c_{\mathcal{S}}(S)=0$, thus (2) follows.
\end{proof}

\begin{proposition}
\label{prop: subgraph rhombus criterion}
Let $G$ and $H$ be two chordal graphs such that $G\subseteq H$. 
Then the vertices $c_G$, $c_H$ of $\CGP_n$ fulfill the rhombus criterion. 
\end{proposition}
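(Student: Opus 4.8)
The plan is to apply \cref{lem: compatible witnesses}: it suffices to produce a subset $\mathcal{S}\subseteq\mathcal{M}$ with $\mathcal{S}$ and $\mathcal{M}\setminus\mathcal{S}$ both chordally compatible and $c_\mathcal{S}\notin\{c_G,c_H\}$, or to observe that $c_G,c_H$ is an edge. We may assume $G\subsetneq H$, since otherwise $c_G=c_H$. Because $G\subseteq H$ we have $c_G\le c_H$ coordinatewise, so $\Delta(G,H)=\{S\colon G|_S \text{ is incomplete and } H|_S \text{ is complete}\}$, and $c_G=0$, $c_H=1$ on every node of $\Delta(G,H)$. Recall that the graph induced by $c_\mathcal{S}$ is $G\cup\bigcup_{S\in\mathcal{S}}E(S)$; I will in fact argue that \emph{every} $\mathcal{S}\subseteq\mathcal{M}$ is chordally compatible, so that the face $C_{G,H}$ is an affine cube and the statement follows.

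The structural input I would establish first is the following: the inclusion-minimal nodes of $\Delta(G,H)$ are exactly the edges of $H\setminus G$, and any two such edges contained in a common node $S\in\Delta(G,H)$ (equivalently, in a common maximal clique of $H$) lie in the same component of $\Delta(G,H)$. Indeed, if $e,f\subseteq S$ are missing edges and $S\in\Delta(G,H)$, then $e\cap S=e$ and $f\cap S=f$ are again in $\Delta(G,H)$, so $e$ and $f$ are both adjacent to $S$. Consequently, for every $S\in\Delta(G,H)$ all missing edges contained in $S$ lie in a single component $M_S$, namely the component of $S$ itself.

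Given this, condition (2) of \cref{lem: compatibilty criterions} is automatic. Writing $D_\mathcal{S}$ for the graph induced by $c_\mathcal{S}$, a node $S\in\Delta(G,H)$ is complete in $D_\mathcal{S}$ exactly when all missing edges of $S$ have been added, i.e.\ exactly when $M_S\in\mathcal{S}$, and any set off $\Delta(G,H)$ that is complete in $D_\mathcal{S}\subseteq H$ is automatically complete in $G$. Hence $c_{D_\mathcal{S}}=c_\mathcal{S}$ as soon as $D_\mathcal{S}$ is chordal, and \textbf{the chordality of $D_\mathcal{S}$, condition (1) of \cref{lem: compatibilty criterions}, is the point I expect to be the heart of the argument.} To prove it I would take a shortest chordless cycle $Z$ in $D_\mathcal{S}$; since $Z\subseteq H$ and $H$ is chordal, $H|_{V(Z)}$ triangulates $Z$, so it has an ear, a triangle $\{p,r,q\}$ whose base $pq$ is a chord of $Z$ in $H$ and whose other two edges $pr,rq$ are consecutive edges of $Z$. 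As $pq\notin D_\mathcal{S}$, it is a missing edge in some component $M'\notin\mathcal{S}$; by the previous paragraph every missing edge of this triangle lies in $M'$, so $pr$ and $rq$ cannot be missing edges of components in $\mathcal{S}$ and must therefore lie in $G$. I would then finish either by contracting the ear to a shorter cycle and iterating, or, more cleanly, by working in a clique tree of $H$, where the components of $\Delta(G,H)$ correspond to subtrees separated by separators containing no missing edge (hence complete in $G$); completing the cliques of the components in $\mathcal{S}$ while leaving those separating cliques intact preserves the existence of a clique tree, and thus chordality.

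Finally I would collect the pieces. Once every $\mathcal{S}\subseteq\mathcal{M}$ is known to be chordally compatible, the map $\mathcal{S}\mapsto c_\mathcal{S}$ is affine and injective on $\{0,1\}^{\mathcal{M}}$, because each component $M$ governs the nonempty, pairwise disjoint set of coordinates $\{S\in\Delta(G,H)\colon M_S=M\}$. Therefore the face $C_{G,H}$ of \cref{lem: compatible face} is affinely a cube. By \cref{cor: Cube is rhombus} and \cref{prop: affine equivalence of rhombus criterion} it fulfills the rhombus criterion, and since $c_G=c_\emptyset$ and $c_H=c_{\mathcal{M}}$ are opposite vertices of this face, the conclusion for $\CGP_n$ follows exactly as in \cref{cor: 4 vertices}.
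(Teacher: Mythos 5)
Your overall strategy coincides with the paper's: show that \emph{every} $\mathcal{S}\subseteq\mathcal{M}$ is chordally compatible, conclude that the face $C_{G,H}$ of \cref{lem: compatible face} is affinely a cube, and read off witnesses. Your handling of condition (2) of \cref{lem: compatibilty criterions} --- via the observation that all missing edges inside a node $S\in\Delta(G,H)$ lie in the component of $S$ --- is correct, and indeed more careful than the paper's one-line dismissal of that condition. The genuine gap is in the step you yourself flag in bold as the heart of the argument: chordality of the graph $D_\mathcal{S}$ induced by $c_\mathcal{S}$. Your ear argument correctly produces a triangle $\{p,r,q\}$ with $pr,rq\in G$ and $pq\in H\setminus G$ lying in a component $M'\notin\mathcal{S}$, but neither proposed way of finishing is a proof. ``Contracting the ear and iterating'' fails as stated: the shorter cycle $\left(Z\setminus\{r\}\right)\cup\{pq\}$ uses the edge $pq\notin D_\mathcal{S}$, so it is not a cycle of $D_\mathcal{S}$, and there is no shorter chordless cycle of $D_\mathcal{S}$ to which an inductive hypothesis could apply. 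The clique-tree alternative can be made to work, but as written it rests on two unproved claims: that the components of $\Delta(G,H)$ correspond exactly to the subtrees of a clique tree of $H$ obtained by cutting separators containing no missing edge (this needs the induced-subtree property of edges in clique trees), and that completing the cliques of the components in $\mathcal{S}$ again yields a graph with a clique tree --- which requires showing that $D_\mathcal{S}$ restricted to the union of the cliques of one component is exactly $G$ or $H$ there (no stray edges between different cliques of the same component) and then invoking closure of chordality under clique sums along complete separators.

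The gap is closable in two ways, each a few lines. The paper's route runs in the opposite direction from your ear: since a chordless cycle $Z$ of $D_\mathcal{S}$ cannot lie entirely in $G$, pick an \emph{added} edge $ij\in Z$, whose component lies in $\mathcal{S}$; a triangulation of $Z$ inside $H$ gives a triangle $\{i,j,\ell\}$ with $\ell\in V(Z)$, which is a node of $\Delta(G,H)$ adjacent to $ij$ there, hence its component is also in $\mathcal{S}$; this forces both $i\ell$ and $j\ell$ into $D_\mathcal{S}$, and at least one of them is a chord of $Z$, a contradiction. Alternatively, your own direction completes in the same spirit: every chord of $Z$ in $H$ lies outside $D_\mathcal{S}$, hence is a missing edge in a component not in $\mathcal{S}$; the triangles of an $H$-triangulation of $Z$ are nodes of $\Delta(G,H)$ pairwise connected through their shared diagonals, so they all lie in a single component $M^*\notin\mathcal{S}$; but then any edge of $Z$ belonging to $H\setminus G$ is adjacent in $\Delta(G,H)$ to such a triangle, hence lies in $M^*$ and is not added --- so $Z\subseteq G$, contradicting chordality of $G$. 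Adding either patch would make your argument complete and essentially equivalent to the published proof.
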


\begin{proof}
If $\Delta(G, H)$ is connected we are done by \cref{cor: 4 vertices} as the face only contains $c_G$ and $c_H$. 
Thus we can assume that $\Delta(G, H)$ has components $\{S_1,\dots, S_k\}=\mathcal{M}$ where $k\geq 2$. 
We claim that any $\mathcal{S}\subseteq \mathcal{M}$ is chordally compatible. 
Since $G\subseteq H$ we have $c_G|_{S_i}=0$ for all $i$, hence the only way $\mathcal{S}$ and $G$ are not compatible will be if the graph induced by $c_\mathcal{S}$, $D$, will be if $D$ has a chordless cycle. 
Assume we have a cycle $C\subseteq D$. 
As this cycle is not in $G$, we must have added in an edge $ij\in C\subseteq H$. 
However, as $H$ is chordal we have a chord $i-j-\ell-i$ for a $\ell\in C$. 
As this chord is not present in $G$, both the chord, the triple $\{i,j,\ell\}$, and the edge $ij$ must be in $\Delta(G, H)$. 
By definition of $\Delta(G, H)$ they are in the same connected component, and thus we must have $c_D(\{i,j,\ell\})=c_D(\{i,j\})=1$. 
Thus $C$ has a chord, and it follows that $\mathcal{S}$ is chordally compatible. 
Therefore the face $C_{G, H}$ defined in \cref{lem: compatible face} is a cube, and if we let $\mathcal{S}=\{S_1\}$ we get our witnesses. 
\end{proof}
 
\begin{corollary}
The complete graph $K_n$ has an edge to every other vertex of $\CGP_n$. 
Therefore the diameter of $\CGP_n$ is 2 if $n\geq 3$. If $n=2$, the diameter is $1$, and if $n=1$ it is $0$. 
\end{corollary}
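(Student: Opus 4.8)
The plan is to prove the edge claim using the difference-graph machinery built up above, and then read off the diameter as a short consequence. First I would record that $c_{K_n}$ is the all-ones vector: for every $S$ with $|S|\geq 2$ the induced subgraph $K_n|_S$ is complete, so $c_{K_n}(S)=1$. Hence for any chordal graph $G\neq K_n$ the nodes of $\Delta(G,K_n)$ are exactly the sets $S$ (with $|S|\geq 2$) that are \emph{not} complete in $G$, and this collection is nonempty because $G$ omits at least one edge.

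The key step is to show that $\Delta(G,K_n)$ is connected. Given two nodes $S,T$, I would route through their union: since $S$ is not complete there are $i,j\in S$ with $ij\notin G$, so $S\cup T\supseteq\{i,j\}$ is also not complete and is therefore a node; moreover $S\cap(S\cup T)=S$ and $T\cap(S\cup T)=T$ both lie in $\Delta(G,K_n)$, so $S$--$(S\cup T)$--$T$ is a path. Thus $\mathcal{M}$ is a single component, and the only chordally compatible sets are $\mathcal{S}=\emptyset$ and $\mathcal{S}=\mathcal{M}$, giving $c_G$ and $c_{K_n}$. By \cref{lem: compatible face} the set $C_{G,K_n}=\conv(c_G,c_{K_n})$ is a face of $\CGP_n$, and being the convex hull of two points it is $1$-dimensional, i.e. an edge. (Equivalently one invokes \cref{cor: 4 vertices} together with \cref{lem: compatible witnesses} to see that no witnesses exist.)

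For the diameter, the adjacency of $c_{K_n}$ to every other vertex yields a path of length at most $2$ between any two vertices through $c_{K_n}$, so the diameter is at most $2$. For $n\geq 3$ I would force equality by exhibiting a single non-edge: let $G$ be the edgeless graph, so $c_G=\zero$, and let $H$ have exactly the two edges $12$ and $13$; then the single-edge graphs on $12$ and on $13$ serve as witnesses, since $c_G+c_H=c_{\{12\}}+c_{\{13\}}$, so $c_G,c_H$ is a non-edge by \cref{lem: square criterion} and hence at distance $2$. The remaining cases are immediate: $\CGP_2=\conv(\zero,c_{K_2})$ is a segment of diameter $1$, and $\CGP_1$ is a single point of diameter $0$.

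I expect the only genuinely substantive point to be the connectedness argument, together with the observation that connectedness of $\Delta(G,K_n)$ --- rather than merely having few components --- is what upgrades the rhombus criterion from ``holds'' to ``$c_G,c_{K_n}$ is actually an edge'': when $\Delta$ is connected the face $C_{G,K_n}$ collapses to the segment $\conv(c_G,c_{K_n})$, leaving no room for witnesses. Everything else is routine bookkeeping.
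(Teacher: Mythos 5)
Your proof is correct and takes essentially the same approach as the paper: you show $\Delta(G,K_n)$ is connected (the paper uses $[n]$ as a hub adjacent to every node of $\Delta(G,K_n)$, you route through $S\cup T$, which is the same observation), conclude via \cref{lem: compatible face} that the face $C_{G,K_n}$ has only the two vertices $c_G$ and $c_{K_n}$ and is hence an edge, and then bound the diameter below with the same rhombus of small graphs (the paper takes the two single-edge graphs as the non-edge with the empty graph and their union as witnesses; you take the other diagonal of the same rhombus). No gaps.
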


\begin{proof}
For any chordal graph $G\neq K_n$ we have $G\subseteq K_n$ with $[n]\in\Delta(G, H)$. 
It follows by definition that $[n]$ is connected to every node in $\Delta(G, H)$ and thus, as noted in the proof of \cref{prop: subgraph rhombus criterion}, $c_G$, $c_H$ is an edge of $\CGP_n$.
Therefore the diameter of $\CGP_n$ is at most two.

If $n\geq 3$ we need to find two graphs that does not form an edge of $\CGP_n$. 
To this end, consider two graphs each containing a single edge. 
Then the union of these graphs and the graph with no edges are witnesses of the rhombus criteria. 
All that is left are the cases $n\leq 2$, all of which can be done by hand. 
\end{proof}

The first part in the above corollary also follows from the observation that the complete graph is the only graph such that $c_G([n])=1$.
Hence the chordal graph polytope is a pyramid with $c_{K_n}$ as apex. 

A graph is called a \emph{split graph} if $V_G=K_G\dot\cup O_G$ where $O_G$ is an independent set in $G$ and $K_G$ is complete.
Equivalently, split graphs are exactly the chordal graphs whose complement is also chordal. The split into the disjoint set $K_G$ and $O_G$ is not unique in general, and we will assume that $K_G$ is maximal. 
Split graphs is a very interesting class of chordal graphs, and for sufficiently large $n$ they are almost all chordal graphs. 

\begin{theorem}\cite{BRW85}
Let $s_n$ be the number of split graphs with $n$ vertices and let $c_n$ be the number of chordal graphs. 
Then for any $\alpha > \frac12\sqrt{3}$ and $n$ sufficiently large we have
\[
\frac{s_n}{c_n}> 1-\alpha^n.
\]
\end{theorem}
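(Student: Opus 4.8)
The plan is to reduce the theorem to a single enumerative estimate and then to control the number of chordal graphs that fail to be split.

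\textbf{Reduction via a forbidden-subgraph characterization.} By the classical F\"oldes--Hammer characterization, a graph is a split graph if and only if it contains no induced subgraph isomorphic to $2K_2$, $C_4$, or $C_5$. Since $C_4$ and $C_5$ are chordless cycles, a chordal graph contains neither; hence a chordal graph is a split graph \emph{precisely} when it has no induced $2K_2$. Writing $d_n \coloneqq c_n - s_n$ for the number of chordal non-split graphs on $[n]$, the inequality $s_n/c_n > 1-\alpha^n$ is equivalent to $d_n/c_n < \alpha^n$, and since $s_n \le c_n$ it suffices to prove
\[
d_n \le \alpha^n\, s_n
\]
for $n$ large. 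So the whole problem reduces to bounding the number of chordal graphs that contain an induced $2K_2$.

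\textbf{The easy lower bound on $s_n$.} Fix $k=\lfloor n/2\rfloor$, declare a fixed $k$-subset to be a clique and its complement an independent set, and join the two parts by an arbitrary bipartite graph; distinct bipartite graphs give distinct split graphs, so
\[
s_n \ge 2^{k(n-k)} = 2^{\lfloor n^2/4\rfloor}.
\]
More generally, summing $\binom{n}{k}2^{k(n-k)}$ over $k$ shows that $s_n$ is, up to a factor subexponential in $n^2$, of order $2^{n^2/4}$, the sum being dominated by the terms with $k$ within $O(1)$ of $n/2$.

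\textbf{The crux: an upper bound on $d_n$.} Here I would encode each chordal graph by a perfect elimination ordering, equivalently by a clique tree, and establish two things. First, that the count of chordal graphs, like that of split graphs, concentrates on those whose maximum clique has size $k = n/2 + O(\sqrt n)$, so that graphs with maximum clique far from $n/2$ are already exponentially rare and contribute below the target bound. Second, that for $k$ in this dominant range, forcing an induced $2K_2$ costs a constant-fraction loss in the number of admissible local attachments across a linear-sized collection of vertices: a $2K_2$ demands two ``private'' edges with all four cross pairs absent, and preserving chordality around such a configuration sharply restricts how the four participating vertices may attach into the clique. Quantifying this restriction per vertex and union-bounding over the polynomially many minimal $2K_2$-obstructions should yield a deficit of the form $(3/4)^{n/2+o(n)} = (\tfrac12\sqrt3)^{\,n+o(n)}$ relative to $s_n$, which is exactly the claimed factor for any $\alpha > \tfrac12\sqrt3$.

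\textbf{Main obstacle.} The difficulty lies entirely in this last step. Unlike split graphs, chordal graphs admit no product formula, so the bound on $d_n$ cannot be read off directly and must instead come from a careful clique-tree encoding together with a large-deviation estimate pinning down both the concentration of the maximum-clique size and the precise constant $3/4$ measuring how an induced $2K_2$ depletes the admissible configurations. Matching the constant $\tfrac12\sqrt3$ — rather than merely \emph{some} $\alpha<1$ — is what forces one to track the dominant term of the enumeration rather than settle for a crude counting bound.
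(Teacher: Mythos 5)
First, a point of reference: the paper does not prove this statement at all --- it is quoted verbatim from Bender, Richmond and Wormald \cite{BRW85}, so there is no internal proof to compare against; your attempt has to stand on its own as a proof of their enumeration theorem. It does not. Your reduction is fine: the F\"oldes--Hammer characterization correctly turns the problem into bounding $d_n$, the number of chordal graphs containing an induced $2K_2$, and your lower bound $s_n \ge 2^{\lfloor n^2/4 \rfloor}$ is correct. You have even located the right source of the constant: if $u,v$ lie outside a clique $K$ and $uv$ is an edge, chordality forces $N(u)\cap K$ and $N(v)\cap K$ to be nested (otherwise an induced $C_4$ appears), which cuts the $4$ joint attachment options per clique vertex down to $3$, and $(3/4)^{n/2} = \bigl(\tfrac12\sqrt{3}\bigr)^n$ is exactly the claimed rate.

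The gap is that the crux is never proved --- it is only announced. Your step ``the count of chordal graphs concentrates on those whose maximum clique has size $n/2 + O(\sqrt n)$'' is itself a structural statement about almost all chordal graphs that is essentially equivalent in difficulty to the theorem: to rule out chordal graphs whose shape is far from split, you need an upper bound on the number of such graphs, which is precisely the quantity $d_n$ you set out to control. As written, the argument is circular. Likewise, the phrase ``union-bounding over the polynomially many minimal $2K_2$-obstructions'' hides the real work: the obstructions are not polynomially many (any pair of disjoint edges can form one), and the per-obstruction deficit must be shown to hold \emph{uniformly} over all chordal graphs, not just over those already known to look like split graphs with a balanced clique. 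In \cite{BRW85} this is handled by a genuine asymptotic enumeration of chordal graphs (decomposing by the largest clique and bounding the contribution of each shape), and that enumeration --- not the heuristic $3$-versus-$4$ count --- is the content of the theorem. Your proposal is a plausible roadmap, honestly flagged as such, but the distance from roadmap to proof here is the entire theorem.
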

That is, asymptotically almost all chordal graphs are split graphs.
We will often use that a split graph $G$ is completely determined by $O_G$, and $\ne_G(x)$ for all $x\in O_G$. 

\begin{lemma}
\label{lem: complete neigh}
Let $G$ and $H$ be two chordal graphs. 
If there is a node $x$ such that $\ne_G(x)\neq\ne_H(x)$ and both are complete in both $G$ and $H$ then $c_G$, $c_H$  fulfills the rhombus criterion. 
Moreover, if $G$ and $H$ only differ on edges incident to $x$, then $c_G$, $c_H$ is an edge, otherwise we have witnesses. 
\end{lemma}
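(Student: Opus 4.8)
The plan is to exploit that the completeness hypotheses make $x$ a simplicial vertex of both $G$ and $H$, and then to manufacture witnesses by swapping the neighbourhood of $x$ between the two graphs. Write $G_0=G-x$ and $H_0=H-x$; both are chordal, being induced subgraphs of chordal graphs. Since $\ne_H(x)$ is complete in $G$ and avoids $x$, it is a clique of $G_0$, so the graph $P$ obtained from $G_0$ by adding $x$ adjacent exactly to $\ne_H(x)$ is chordal (a simplicial vertex attached to a chordal graph). Symmetrically, attaching $x$ to $H_0$ along $\ne_G(x)$ gives a chordal graph $Q$. The idea is that $P$ and $Q$ will be the witnesses.

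First I would verify the identity $c_P+c_Q=c_G+c_H$. For a set $S$ not containing $x$ all four imsets only depend on $G_0$ or $H_0$, and there $c_P(S)=c_G(S)$ and $c_Q(S)=c_H(S)$, so the identity is immediate. For a set of the form $\{x\}\cup T$ all four completeness conditions are used at once: because $\ne_G(x)$ is complete in $G$ we get $c_G(\{x\}\cup T)=1$ exactly when $T\subseteq\ne_G(x)$, and similarly $c_H(\{x\}\cup T)=1$ iff $T\subseteq\ne_H(x)$; because $\ne_H(x)$ is complete in $G$ (hence in $G_0$) we get $c_P(\{x\}\cup T)=1$ iff $T\subseteq\ne_H(x)$, and because $\ne_G(x)$ is complete in $H$ we get $c_Q(\{x\}\cup T)=1$ iff $T\subseteq\ne_G(x)$. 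The two sides then match term by term. This computation is the technical heart; the only subtlety is that completeness is precisely what removes the ``$T$ is a clique in the base graph'' clause from each value.

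With the identity in hand the proof follows the dichotomy in the ``moreover''. If $G$ and $H$ differ on some edge not incident to $x$, then $G_0\neq H_0$, whence $c_P\neq c_H$ (they disagree off $x$) and $c_P\neq c_G$ (they disagree on $x$-sets, since $\ne_G(x)\neq\ne_H(x)$), and likewise $c_Q\notin\{c_G,c_H\}$; so $P,Q$ are honest witnesses and $c_G,c_H$ is a non-edge fulfilling the criterion. If instead $G$ and $H$ differ only on edges incident to $x$, then $G_0=H_0$, the swap degenerates to $P=H$ and $Q=G$, and produces nothing, so here I would show that $c_G,c_H$ is genuinely an edge. Now every set of $\Delta(G,H)$ contains $x$, and using that $\ne_G(x)$ and $\ne_H(x)$ are cliques together with \cref{cor: delta constant on components}, one checks that $\Delta(G,H)$ has at most two components $M_G,M_H$ (the sets with $c_G=1$ and the sets with $c_H=1$). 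If there is only one, the neighbourhoods are nested, $G\subseteq H$ or $H\subseteq G$, and \cref{prop: subgraph rhombus criterion} together with the collapse of the face $C_{G,H}$ to the segment $[c_G,c_H]$ gives the edge. If there are two, then since the intersection graph $G\cap H$ is automatically a chordal vertex, the only non-trivial splitting in \cref{lem: compatible witnesses} pairs $G\cap H$ with the union graph $D$ induced by $c_{\{M_G,M_H\}}$.

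The main obstacle is this last configuration: I must show $D$ is not chordally compatible, so that \cref{lem: compatible witnesses} leaves only the trivial splitting and forces an edge. Choosing $a\in\ne_G(x)\setminus\ne_H(x)$ and $b\in\ne_H(x)\setminus\ne_G(x)$, completeness makes $\{x,a,b\}$ a clique of $D$, whereas $c_{\{M_G,M_H\}}(\{x,a,b\})=0$; thus $D$ violates condition (2) of \cref{lem: compatibilty criterions} and is not compatible, killing the only candidate witnesses. I expect the bookkeeping of the two components, and the verification that no chordally compatible vertex other than $c_G,c_H$ survives, to be the delicate part; this is also the precise place where the completeness of the neighbourhoods is essential, since it is exactly what guarantees that $\ne_G(x)\cup\ne_H(x)$ spans a clique and hence that $\{x,a,b\}$ is complete in $D$.
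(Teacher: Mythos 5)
The first half of your proposal is correct and is essentially the paper's own argument: your swap graphs $P$ and $Q$ are exactly the paper's $G'$ and $H'$ (detach $x$ and reattach it along the other neighbourhood), chordality follows because $x$ is simplicial, and your case analysis establishing $c_P+c_Q=c_G+c_H$ is the same computation as the paper's. This correctly settles both conclusions whenever $G$ and $H$ differ on some edge not incident to $x$.

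The two-component subcase of your edge argument, however, contains a genuine gap: the assertion that ``completeness makes $\{x,a,b\}$ a clique of $D$'' for $a\in\ne_G(x)\setminus\ne_H(x)$ and $b\in\ne_H(x)\setminus\ne_G(x)$. The hypotheses make each of $\ne_G(x)$ and $\ne_H(x)$ complete, but they say nothing about edges \emph{between} the two differences, so $ab$ need not be an edge of $G-x=H-x$, and then $\{x,a,b\}$ is not a clique of the union graph $D$. Worse, the gap cannot be repaired, because the ``moreover'' clause you are trying to prove is false as stated: take $n=3$, let $G$ consist of the single edge $xa$ and $H$ of the single edge $xb$. All hypotheses of the lemma hold and $G,H$ differ only on edges incident to $x$, yet $c_G+c_H=c_{D_0}+c_{D_1}$, where $D_0$ is the empty graph and $D_1$ is the path with edges $xa$ and $xb$; both are chordal, so by \cref{lem: square criterion} the pair $c_G,c_H$ is a non-edge with witnesses. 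In your language: the intersection splitting $\emptyset$ is \emph{always} chordally compatible here, and $\{M_G,M_H\}$ is chordally compatible precisely when no pair $a,b$ as above is adjacent in $G-x$ (and the union stays chordal), which is what happens in this example. What does survive in the degenerate case $G-x=H-x$ is the first assertion of the lemma: $\Delta(G,H)$ has at most two components, so $C_{G,H}$ has at most four vertices and \cref{cor: 4 vertices} gives the rhombus criterion. For what it is worth, the paper's own proof of the ``moreover'' clause stumbles at exactly the same spot: it identifies the two remaining compatible sets with $G'$ and $H'$, but when $G-x=H-x$ those sets give the intersection and union graphs, not $G'$ and $H'$. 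Your clique argument is precisely the right one in the situations where the lemma is actually invoked in the proof of \cref{thm:split edges} (in Case I one has $a,b\in K_G=K_H$, which forces the edge $ab$ and kills the compatibility of $\{M_G,M_H\}$), but that extra structure is not part of the lemma's hypotheses.
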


\begin{proof}
We will continuously use that if we let $D$ be a graph with a node $y$ such that $\ne_D(y)$ is complete in $D$. 
Then $D$ is chordal if and only if $D|_{[n]\setminus \{x\}}$ is chordal. 
This can be shown, for example, using a standard argument about perfect elimination orderings.

Define the graph $G'$ from $G$ via removing all edges connected to $x$ and then adding in edges such that $\ne_{G'}(x)=\ne_H(x)$. 
Similarly define $H'$ such that  $\ne_{H'}(x)=\ne_G(x)$. 
By the above, both $G'$ and $H'$ are chordal. 

If we let $\Gamma_G(x)=\{S\cup \{x\}\colon S\subseteq\ne_G(x), |S|\geq 1\}$, and define $\Gamma_G(x)$ similarly. 
It is direct that 
\[
c_{G'}=c_G-\sum_{S\in \Gamma_G(x)}e_S+\sum_{S\in \Gamma_H(x)}e_S, 
\]
and again, similarly for $c_{H'}$. 
It follows that $c_G+c_H=c_{G'}+c_{H'}$. 
If $G'\neq H$, then the rhombus criterion is fulfilled, with $c_{G'}$ and $c_{H'}$ as witnesses. 

Otherwise, it follows that $\Delta(G, H)=(\Gamma_G(x)\setminus \Gamma_H(x))\cup (\Gamma_H(x)\setminus \Gamma_G(x))$. 
If $\Gamma_G(x)\setminus \Gamma_H(x)$ is non-empty, we must have $\{x\}\cup\ne_G(x)\in \Gamma_G(x)\setminus \Gamma_H(x)$. 
As every set in $\Gamma_G(x)\setminus \Gamma_H(x)$ is a subset of $\{x\}\cup\ne_G(x)$ it follows by definition of $\Delta(G, H)$ that $\Gamma_G(x)\setminus \Gamma_H(x)$ is a (possibly empty) connected component of $\Delta(G, H)$. 
By symmetry the same is true for $\Gamma_H(x)\setminus \Gamma_G(x)$. 
Therefore $C_{G, H}$ has at most $4$ vertices, and the result follows from \cref{cor: 4 vertices}. 

Going even further, if $c_G$ and $c_H$ is not an edge of $C_{G, H}$, then we must have $4$ compatible sets in $\Delta(G, H)$. 
Since we only had 2 components in $\Delta(G, H)$, the compatible sets must be $\{\emptyset, \Gamma_H(x)\setminus \Gamma_G(x), \Gamma_G(x)\setminus \Gamma_H(x), \Delta(G, H)\}$. 
Two of these sets represent $G$ and $H$, and the other two $G'$ and $H'$.  
Therefore, if $c_{G'}$ and $c_{H'}$ are not witnesses, then $c_G$, $c_H$ is an edge. 
\end{proof}


\begin{theorem}\label{thm:split edges}
Assume $G$ and $H$ are two split graphs. 
Then $c_G$ and $c_H$ fulfill the rhombus criterion in $\CGP_n$. 
\end{theorem}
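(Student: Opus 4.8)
The plan is to work inside the face $C_{G,H}$ and use the reformulation in \cref{lem: compatible witnesses}: it suffices either to certify that $c_G,c_H$ is an edge, or to produce a bipartition $\mathcal{M}=\mathcal{S}\sqcup(\mathcal{M}\setminus\mathcal{S})$ of the components of $\Delta(G,H)$ into two chordally compatible halves with $c_{\mathcal{S}}\notin\{c_G,c_H\}$. First I would dispose of the cases already covered by earlier results. If $\Delta(G,H)$ has at most two components we are finished by \cref{cor: 4 vertices}; if $G\subseteq H$ (or $H\subseteq G$) we are finished by \cref{prop: subgraph rhombus criterion}; and if there is a vertex $x$ with $\ne_G(x)\neq\ne_H(x)$ that is complete in both graphs on both sides, we are finished by \cref{lem: complete neigh}. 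This isolates the genuine case, in which $G$ and $H$ disagree at several vertices and no single differing neighborhood is complete enough to apply \cref{lem: complete neigh} directly.

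Next I would exploit the defining feature of split graphs. For a split graph $G$ with maximal clique $K_G$ and independent set $O_G$, the maximal cliques of $G$ are exactly $K_G$ together with the stars $\{x\}\cup\ne_G(x)$ for $x\in O_G$, and $c_G$ is the indicator of the down-closure (among sets of size at least two) of these maximal cliques. Consequently every set $S$ with $c_G(S)\neq c_H(S)$ is a clique of exactly one of the two graphs and therefore lies below one of these stars, which lets me describe each connected component of $\Delta(G,H)$ as a family of $\Delta$-sets carried by a fixed differing maximal clique. The goal is then to pick a distinguished vertex $x$ at which $G$ and $H$ disagree and to let $\mathcal{S}$ collect precisely the components ``carried by'' $x$; concretely this is the partial swap that installs $\ne_H(x)$ into the background graph $G\cap H$ on one side and $\ne_G(x)$ on the other. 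By \cref{lem: compatible witnesses} we automatically have $c_{\mathcal{S}}+c_{\mathcal{M}\setminus\mathcal{S}}=c_G+c_H$, and since we are in the multi-component case I can arrange $c_{\mathcal{S}}\notin\{c_G,c_H\}$.

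The remaining and hardest step is to check that both halves are \emph{chordally} compatible, for which \cref{lem: compatibilty criterions} demands of each induced graph $D$ both that it has no chordless cycle and that it creates no spurious clique $S$ with $c_D(S)=1$ but $c_{\mathcal{S}}(S)=0$. I expect the spurious-clique condition to be the delicate point, since installing the $H$-neighborhood of $x$ into the $G$-background (the situation \cref{lem: complete neigh} could only treat when that neighborhood was already complete) might complete a set that was a clique in neither $G$ nor $H$. The main work should be a case analysis, organized by \cref{cor: delta constant on components}, showing that any such newly completed set, or any chordless cycle, would force an edge inside $\ne_G(x)\cup\ne_H(x)$ present in both $G$ and $H$ and hence outside $\Delta(G,H)$, contradicting the choice of $x$ or the split structure. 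I anticipate that choosing $x$ extremally, for instance so that its differing star is inclusion-minimal among the differing maximal cliques, makes its component independent of the others, reducing the general statement to the two-component situation already handled by \cref{cor: 4 vertices}.
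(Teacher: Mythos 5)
Your opening moves (working inside $C_{G,H}$, invoking \cref{lem: compatible witnesses}, and peeling off the cases handled by \cref{cor: 4 vertices}, \cref{prop: subgraph rhombus criterion} and \cref{lem: complete neigh}) coincide with the paper's, and your observation that every set in $\Delta(G,H)$ lies under one of the maximal cliques $K_G$, $K_H$, or a star $\{x\}\cup\ne_G(x)$, $\{x\}\cup\ne_H(x)$ is correct. The gap is in your core step. You assume that in the multi-component case you ``can arrange $c_{\mathcal{S}}\notin\{c_G,c_H\}$'' by swapping the neighborhood of a single differing vertex $x$, and that choosing the differing star inclusion-minimal makes the $\Delta$-sets carried by $x$ a connected component of their own. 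Both claims fail: whenever $\ne_H(x)$ is not complete in $G$, there are $y,z\in\ne_H(x)$ with $yz\in H\setminus G$, so the sets $\{x,y,z\}$ and $\{y,z\}$ glue the $\Delta$-sets containing $x$ to the component $S_H$ containing $K_H$ (note $K_H\in\Delta(G,H)$ once $K_G\subsetneq K_H$). Inclusion-minimality of the star does not prevent this, so the ``component carried by $x$'' need not exist, and a single-vertex swap does not correspond to any union of components of $\Delta(G,H)$.

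More seriously, in exactly this regime witnesses may not exist at all even though $\mathcal{M}$ has many components, so your anticipated reduction to the two-component case is impossible. This is the paper's Case IIc ($K_G\subsetneq K_H$ and every differing $\ne_H(x)$ incomplete in $G$): there the paper proves that any $\mathcal{S}$ containing $S_H$ other than $\mathcal{M}$ itself violates criterion (2) of \cref{lem: compatibilty criterions}, so $C_{G,H}$ is a pyramid with apex $c_H$, and $c_G$, $c_H$ is an edge. The rhombus criterion then holds through edge-ness, which your bipartition strategy cannot detect and which requires a separate certification argument. Likewise, in the genuinely two-sided case $K_G\centernot\subseteq K_H$ and $K_H\centernot\subseteq K_G$, the witnesses the paper constructs are not single-vertex swaps: they arise either from exchanging edges along an alternating cycle between $K_G\setminus K_H$ and $K_H\setminus K_G$ (Case IIIb), or, when $\{S_G,S_H\}$ is incompatible, from an iterative green/purple coloring with a sequence of face-defining cost functions that may terminate by certifying an edge rather than producing witnesses (Case IIIc). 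The multi-component, no-complete-neighborhood situation that your proposal dismisses is precisely where the theorem's real content and the paper's long case analysis live.
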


\begin{proof}
Up to symmetry we have 3 cases $K_G=K_H$, $K_G\subsetneq K_H$, and $K_G\centernot\subseteq K_H$.
We will let $\Gamma_G(x)\coloneqq \{S\cup \{x\}\colon S\subseteq \ne_G(x) \textrm{ and } |S|\geq 1\}$. 
Note that if $\ne_G(x)$ is complete, then $\delta_G(x)\coloneqq \Gamma_G(x)\cap \Delta(G, H)$ is connected (albeit, maybe empty) in $\Delta(G, H)$.  
It is direct that $\{S\in\Delta(G, H)\colon x\in S\}=\delta_G(x)\cup \delta_H(x)$. 
We also recall the definition of $E(S)$ as all edges in a component of $\Delta(G, H)$. 
For example, $E(\delta_G(x))$ consists of all edges indicent to $x$ in $G\setminus H$.
By \cref{lem: compatible face} it is enough to consider graphs $D$ such that $G\cap H\subseteq D\subseteq G\cup H$. 

\textbf{CASE I, $K_G=K_H$:} 
If $\ne_G(x)=\ne_H(x)$ for all $x\in O_G=O_H$, then $G=H$ as $G$ and $H$ are split graphs. 
Otherwise we have a node such that $\ne_G(x)\neq \ne_H(x)$. 
As $G$ and $H$ are split graphs we have that $\ne_G(x)$ and $\ne_H(x)$ both lie in $K_H=K_G$ and thus are complete in both $G$ and $H$. 
Thus this case follows by \cref{lem: complete neigh}. 

Working out the details, if there is a unique node $x\in O_G=O_H$ such that $\ne_G(x)\neq \ne_H(x)$, then $c_G$, $c_H$ is an edge, otherwise we have witnesses.



\textbf{CASE II, $K_G\subsetneq K_H$:} 
Note that we have $O_H\subseteq O_G$.
\\
{\textbf a)} If $\ne_G(x)=\ne_H(x)$ for all $x\in O_H$, then we have $G\subseteq H$ and thus the result follows by \cref{prop: subgraph rhombus criterion}. 
We can furthermore note that $K_H\in \Delta(G, H)$ and thus $\Delta(G, H)$ is connected. 
It follows by \cref{lem: compatible face} that $c_G$, $c_H$ is an edge. 
\\[1mm]
{\textbf b)} If there is a node $x\in O_H$ such that $\ne_H(x)\neq \ne_G(x)$ and $\ne_H(x)$ is complete in $G$ (it is always complete in $H$) we can apply \cref{lem: complete neigh}, and we are done. 
Note that this includes all the cases when $|\ne_H(x)|=0$ or $1$.  Since $K_G\neq K_H$ we always have that $G'\neq H$ and thus we always get witnesses. 
\\[1mm]
{\textbf c)}
Otherwise, for every node $x\in O_H$ such that $\ne_H(x)\neq \ne_G(x)$ we have that $\ne_H(x)$ is not complete in $G$. We note that due to $K_G\subsetneq K_H$ we must have that $K_H$ is not complete in $G$ and hence $K_H\in\Delta(G, H)$. Therefore we can define $S_H$ to be the  component of $\Delta(G, H)$ containing $K_H$. 
Consider $\delta_H(x)$ for any $x\in O_H$ such that $\ne_H(x)\neq \ne_G(x)$. 
As $\ne_H(x)$ is not complete in $G$, there is a set $\{z,y\}\in\Delta(G, H)$, that is $zy$ is an edge in $H$ but not in $G$, where $z,y\in \ne_H(x)$. 
Thus  we have an edge between $K_H$ and $\ne_H(x)$ in $\Delta(G, H)$ and as $\delta_H(x)$ is connected in $\Delta(G, H)$, it follows that $\delta_H(x)$ is part of $S_H$. 
Every edge in $H$ not in $G$ is either in $K_H$ or in $\delta_H(x)$ for some $x\in O_H$. Therefore, every set $S\in \Delta(G, H)$ such that $c_H(S)=1$ is in $S_H$. 

Let us consider any $\mathcal{S}\subseteq \mathcal{M}$ with $S_H\in \mathcal{S}$. 
If $\mathcal{M}=\{S_H\}$, then by \cref{cor: 4 vertices} we are done, thus assume $|\mathcal{M}|\geq 2$. 
We claim that if $\mathcal{S}\neq \mathcal{M}$, then $\mathcal{S}$ breaks criterion (2) of \cref{lem: compatibilty criterions}. 
To prove this, take any $T\in\mathcal{S}\setminus\{S_H\}$. 
By the above we have that $c_G|_T=1$ and therefore $T$ contains at least one edge not present in $H$. 
As $K_G\subsetneq K_H$, and $G$ was a split graph, this edge must be on the form $a-b$ where $a\in O_H$ and $b\in K_G$. 
Therefore $\ne_G(a)\neq \ne_H(a)$ and we can consider $N=\ne_G(a)\cup\ne_H(a)\cup\{a\}$. 
As $a-b\notin H$ we must have $c_H(N)=0$. 
If $c_G(N)=1$ then $\ne_H(a)$ is complete in $G$, which was dealt with above, hence we can assume $c_G(N)=0$. 
Thus $N\notin \Delta(G, H)$ and it follows that $c_{G,\mathcal{S}}(N)=0$. 
However, since $S_H\in \mathcal{S}$ we must have that $c_{G,\mathcal{S}}(S)=1$ for every 2-set $S\subseteq K_H$. 
Moreover, since $c_H(\ne_H(a)\cup\{a\})=1$ and $c_G(\ne_G(a)\cup\{a\})=1$, we must have $c_{G,\mathcal{S}}(\{a, y\})=1$ for every $y\in \ne_G(a)\cup\ne_H(a)$. 
Thus by \cref{lem: compatibilty criterions}, $\mathcal{S}$ is not chordally compatible as it breaks criterion (2). 

Thus, of all chordal graphs $D$ such that $c_D=c_{G,\mathcal{S}}$ for some chordally compatible $\mathcal{S}$, $H$ is the only graph with $c_D|_{S_H}=1$.
It follows that $c_H$ is adjacent to every vertex in the face given in \cref{lem: compatible face} as it is a pyramid over all the other vertices. 
This finishes this case. 

\textbf{CASE III, $K_G\centernot\subseteq K_H$:} 
By symmetry we can assume that $K_H\centernot\subseteq K_G$. 
\\[1mm]
{\textbf a)} If there is a vertex $x\in O_H\cup O_G$ such that $\ne_G(x)\neq \ne_H(x)$, $\ne_G(x)$ is complete in $H$ and $\ne_H(x)$ is complete in $G$ we can apply \cref{lem: complete neigh} as $\ne_G(x)$ and $\ne_H(x)$ are complete in $G$ and $H$ respectively because $G$ and $H$ are split graphs. If there is only one such vertex $x\in O_H\cup O_G$, then $c_G,c_H$ forms polytopal edge, otherwise we get witnesses.
\\[3mm]
With $S_G$ and $S_H$ as defined in Case II, we can use the exact same argument as in Case IIc to show that
if $\ne_G(x)$ is not complete in $H$, then $\delta_G(x)\subseteq S_G$. 
As $c_G$ (and $c_H$) differ in value on $\delta_G(x)$ and $\delta_H(x)$ we cannot have $\delta_G(x)$ and $\delta_H(x)$ in the same connected component by \cref{cor: delta constant on components}. 
Therefore, the connected parts of $\Delta(G, H)$ are $S_G$, $S_H$, and instances of $\delta_G(x)$ for $x\in O_G$ and $\delta_H(x)$ for some $x\in O_H$.
Note that $\delta_G(x)$ is a separate component if and only if $\ne_G(x)$ is complete in $H$, and vice versa with $\delta_H(x)$. 
We may assume $\ne_G(x)\neq\ne_H(x)$ since otherwise $\delta_G(x)=\emptyset$.
\\[1mm]
{\textbf b)} In this case we will study when $\{S_G,S_H\}$ is a compatible set. 
To do this we define an edge $e\in G\cup H $ to be {\bf red} if either it is present in both $G$ and $H$, or if it belongs to $E(\delta_G(x))$ for some $x$ such that $\delta_G(x)\subseteq S_G$ or it belongs to $E(\delta_H(x))$ for some $x$ such that $\delta_H(x)\subseteq S_H$. 
Note that for any $\mathcal{S}$ with $\{S_G,S_H\} \subseteq \mathcal{S}$ we have that the graph induced by $c_\mathcal{S}$ includes all red edges.
Other edges that are not red are called {\bf blue}. Edges present in both $G$ and $H$ have two copies, one red and one blue.  
Note that an edge is blue if and only if it is either present in both $G$ and $H$ or it is an edge $ok$ with $o\in O_H$ (or $O_G$) with $\ne_H(o)$ complete in $G$. 
We will show that either the red and blue edges give us witnesses, or we can utilize the obstruction to create witnesses. 
In either case $c_G$ and $c_H$ are not neighbors in the polytope. 
If the remaining components of $\Delta(G,H)$ together induces a chordal graph $F$ (the blue edges) with no new cliques then we have our witnesses. 
That is, if $\mathcal{S}=\Delta(G,H)\setminus\{S_G, S_H\}$ is chordally compatible, then we are done by \cref{lem: compatible witnesses}.
If this is not the case, the graph $F$ is either not chordal, or has a clique not present in $G$ or $H$. 
In the second case we then have a triangle not present in $G$ or $H$, which is a 3-cycle, and hence, if $\Delta(G,H)\setminus\{S_G, S_H\}$ is not chordally compatible, we have a cycle in $F$ not present in $G$ or $H$. Let $C$ be a minimal such cycle. 
By definition of $F$, all edges in $C$ are blue. 

If $C$ contains a node $o\in O_G\cap O_H$, then we have some path $hog\in C$. 
Then the edges $ho$ and $og$ are both blue. 
Assume $h, g\in\ne_G(o)$, then as $ho$ and $og$ are both blue, we must have that $\delta_G(o)$ is a separate component, that is, $\ne_G(o)$ is complete in $H$. 
Therefore we can instead consider the shorter cycle $C$ where we use the edge $hg$ that is present in both $G$ and $H$, and therefore blue. 
Similarly with $h, g\in\ne_H(o)$. 
Therefore we consider when $g\in\ne_G(o)$ and $h\in\ne_H(o)$. 
As $ho$ and $og$ are both blue we must have that both $\delta_G(o)$ and $\delta_H(o)$ are components of $\Delta(G, H)$ not in $S_G$ nor $S_H$. 
This is equivalent to $\ne_G(o)$ being complete in $H$, and $\ne_H(o)$ being complete in $G$. 
Then we can utilize \cref{lem: complete neigh} to obtain witnesses. 
Therefore we can henceforth focus on the case when $C$ has no vertices in $O_G\cap O_H$. 


Assume $C$ contains a node $x\in K_G\cap K_H$. 
If $C$ is contained in $K_G\cap K_H$ then it has no new cycles as both $G$ and $H$ are complete on this set and hence all edges are blue. 
Therefore assume that $xg$ is an edge of $C$ where $g$ is outside $K_G\cap K_H$. 
By symmetry we can assume $g\in K_G\setminus K_H$. 
Let $h$ be the next vertex in $C$. 
If $h\in K_G\cap K_H$ we have an edge $xh$ in $F$, and therefore $C$ is not minimal. 
If $h\in K_G\setminus K_H$ then $gh$ is not in $H$, and hence $\{g_h\}$ is in $S_G$, and therefore not blue. 
We concluded above that $C$ has no vertices in $O_G\cap O_H$.
Therefore the only option is that $h\in K_H\setminus K_G$. 

Note that $|\ne_H(g)\cap(K_H\setminus K_G)|=1$, since the edges in $C$ are blue. 
Indeed, as $C$ only has blue edges $\ne_H(g)$ is complete in $G$, and as $H$ split we can have at most one node outside of $K_H$. 
Therefore $\ne_H(g)\cap(K_H\setminus K_G)=\{h\}$. 
Since $x$ and $h$ are both in $K_H$, and we have the edge 
$xh$ in $H$. 
Moreover $\ne_H(g)$ is complete in $G$ and hence we have $xh$ in $G$ as well. 
Therefore $C$ is not minimal as we could have used the edge $xh$ instead, or $C$ is a triangle and therefore does not contradict chordality of $F$. 
Moreover $c_G(\{x,g,h\}=1$, therefore this triangle exists in $G$.

In conclusion, $C$ has no node $x\in K_G\cap K_H$. 
Therefore the cycle $C$ must thus consist of edges going between $K_G\setminus K_H$ and $K_H\setminus K_G$, and alternating in edges from $G$ and $H$. 
In this situation we form instead two witness by $G'=G\setminus(C\cap G)\cup(C\cap H)$ and $H'=H\setminus(C\cap H)\cup(C\cap G)$. 
We use here that for each vertex in $C$ lying in $K_G\setminus K_H$ has at most one neighbor in $K_H\setminus K_G$ which is thus connected with that blue edge, and we may thus swap the edges along the alternating cycle.
\\[3mm]
{\textbf c)} In this case we will study when $\{S_G,S_H\}$ is not a compatible set. 
Similar to above, we will try to color all edges such that we either find witnesses, or in this case we might obtain an edge-defining cost function. 
Color an edge $e\in G$ {\bf green} if either it is present in both $G$ and $H$, or if it belongs to $\delta_G(x)$ for some $x$ such that $\delta_G(x)\subseteq S_G$. 
Then color a edge {\bf purple} if either it is present in both $G$ and $H$, or if it belongs to $\delta_H(x)$ for some $x$ such that $\delta_H(x)\subseteq S_H$. 
The colors will denote whether the edges need to appear with $S_G$ or $S_H$ in a chordally compatible subset $\mathcal{S}$. 
As we only add in and remove components of $\Delta(G, H)$, as opposed to individual edges, we always color all edges in a component in the same color. 


Let us consider the cost function 
\[
w_{S_H, S_G}(S)=\begin{cases}
\frac{1}{|S_H|} & \text{ if } S\in S_H,\\
\frac{1}{|S_G|} & \text{ if } S\in S_G \text{, and}\\
0 & \text{ otherwise.}
\end{cases}
\]
As we only consider the face $C_{G, H}$, where all vertices $c_D\in C_{G, H}$ are constant on components of $\Delta(G, H)$, it is direct that $\langle w_{S_H, S_G}, c_D\rangle$ assumes one of the the values $0$, $1$, or $2$. 
It is direct that $\langle w_{S_H, S_G}, c_G\rangle=\langle w_{S_H, S_G}, c_H\rangle=1$. 
We claim we cannot have $\langle w_{S_H, S_G}, c_D\rangle=2$, which we will now prove.
Assume $\{S_G, S_H\}$ are not compatible but $\mathcal{S}\supseteq S_G,S_H$ is. 
If $S_H,S_G$ are not compatible because they form a clique not in either $G$ or $H$, then this is true also for any graph $F$ induced by a chordially compatible set $\mathcal{S}\supseteq S_G,S_H$. 
Therefore we must have that $\{S_H,S_G\}$ are not compatible because they create a longer cycle. 
We claim that either we have a chord present in both $G$ and $H$, or we induce a triangle not present in either $G$ or $H$. 
Thus consider a chord $gh$ in $F$, the graph induced by $c_\mathcal{S}$, that is not present in the graph induced by $c_{\{S_G, S_H\}}$. 
If $gh$ is present in both $G$ and $H$, then $\{g, h\}$ is not in $\Delta(G, H)$ and thus the edge is present in any graph induced by some subset of $\mathcal{M}$. 
Otherwise, as $F$ is chordal, $gh$ is part of at least two triangles in the cycle. 
If one of these triangles is present in $G$ and one present in $H$, then $gh$ is present in both $G$ and $H$, hence both of these triangles are only present in one of $G$ or $H$. 
Repeating this argument we have that the whole triangulation of the cycle is present in only one of $G$ or $H$. 
Crucially, this triangulation will be connected in $\Delta(G, H)$. 
As it is not present in the graph induced by $c_{\{S_G, S_H\}}$, it must be on the form $\delta_H(x)$ (or $\delta_G(x)$) for some $x$. 
However, then $\ne_H(x)$ is complete in $G$ and in $H$, and it contains all nodes in the cycle. 
Therefore the cycle is a triangle, and this reduces to the case where $\{S_G, S_H\}$ induces a clique not present in $G$ or $H$. 

We conclude that we cannot have $\langle w_{S_H, S_G}, c_D\rangle=2$ and therefore $w_{S_H, S_G}$ determines a face containing $c_G$ and $c_H$, namely all $c_\mathcal{S}$ such that $S_H$ or $S_G$ is in $\mathcal{S}$, but not both. 
By \cref{prop: face of rhombus criterion} it suffices to consider this face for the reminder of this proof. 
Note that any $c_\mathcal{S}$ maximizing $w_{S_H, S_G}$ either induce all the purple, or all the green edges. 

We will now go through the remaining edges of $G$ and $H$ and color them green or purple as we simultaneously define a cost function forcing them to be together with $S_H$ or $S_H$ respectively. 

Let $gh$ be an uncolored edge in $H$ that is not in $G$, with $g\in O_H, h\in K_H$. 
Since $gh$ is not colored yet, as motivated in Case IIIb, we conclude that $\ne_H(g)\cap (K_H\setminus K_G)=\{h\}$.
We consider the cost function
\[
w_{\delta_H(g), S_G}(S)=\begin{cases}
\frac{1}{|\delta_H(g)|} & \text{ if } S\in \delta_H(g),\\
\frac{1}{|S_G|} & \text{ if } S\in S_G \text{, and}\\
0 & \text{ otherwise.}
\end{cases}
\]
Similar to the above we wish to show that either we have witnesses, or there cannot be a graph $F$ such that $\langle w_{\delta_H(g), S_G}, c_F\rangle =2$. 
Using \cref{thm: edge of face} and \cref{prop: face of rhombus criterion} we can then only consider all graphs $F$ such that $\langle w_{\delta_H(g), S_G}, c_F\rangle =1$. 
Therefore we can effectively color all edges in $\delta_H(g)$ purple. 
Repeating this for all uncolored edges will show the claim. 
See \cref{fig: cases long proof} for a schematic over the cases below. 
Note that we cannot have $\ne_G(g)=\ne_H(g)$ as the edge $gh$ is in $H$ but not in $G$.


\begin{enumerate}[label=(\roman*)]
    \item {} $g\in O_G\cap O_H$ and no colored edges to $g$. As described before, since all edges are uncolored we have that $\ne_G(g)$ is complete in $H$ and vice versa with $\ne_H(g)$. Therefore the conditions of \cref{lem: complete neigh} are satisfied as long as $\ne_G(g)\neq\ne_H(g)$ and thus we have witnesses.  
    \item {} $g\in O_G\cap O_H$ and all edges in $E(\delta_G(g))$ are colored green. If $\ne_G(h)\subseteq \ne_H(g)$, then $G\cup E(\delta_H(g))$ and $H\backslash E(\delta_H(g))$ are witnesses. If on the other hand $\ne_G(h)\backslash \ne_H(g)\neq \emptyset$, then $gh$ cannot be added to the green edges without creating a new cycle not present in neither $G$ nor $H$. Therefore we can use the exact same motivation using $w_{\delta_H(g), S_G}$ as we did about $w_{S_H, S_G}$, note that the reasoning is completely analogous.  
    Thus we use a cost function to narrow to a face of the polytope where $S_G$ and $\delta_H(g)$ are not both in any graph and we can thus color the edges of $E(\delta_H(g))$ purple.
    \item {} $g\in K_G\cap O_H$ and $\ne_G(h)\backslash K_H=\emptyset$. The, almost, verbatim argument made in (ii) can be applied in this case as well. 
    \item {} $g\in K_G\cap O_H$ and there exist a green edge $xh, x\in K_G\backslash K_H$. Again, $gh$ cannot be added to the green edges without creating a new cycle not present in neither $G$ nor $H$. Thus we use a cost function as described above to narrow to a face of the polytope where $S_G$ and $\delta_H(g)$ are not both in any graph and we can thus color the edges of $\delta_H(g)$ purple.
\end{enumerate}

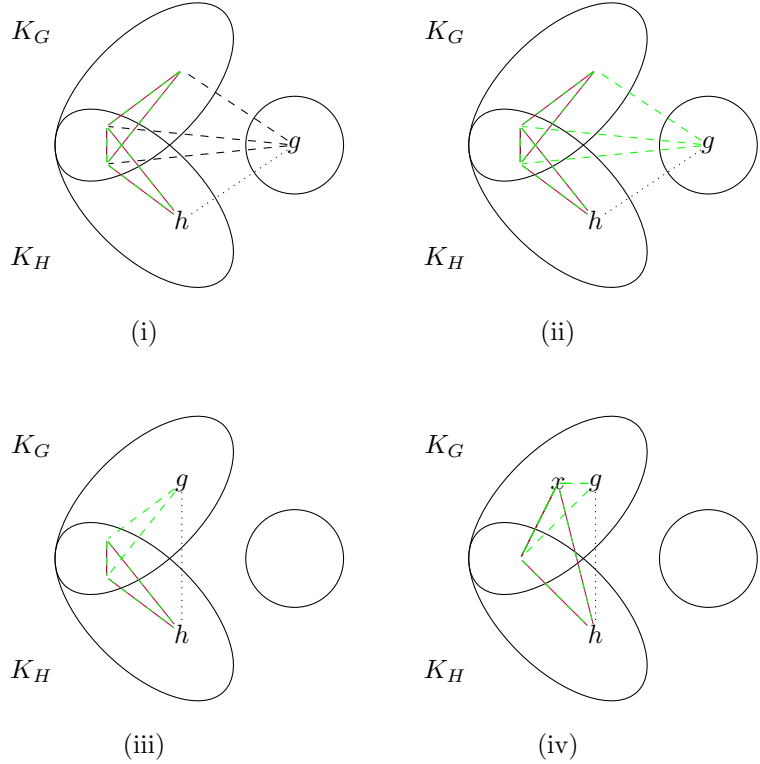
\begin{figure}
\centering
\begin{tikzpicture}[every node/.style={inner sep = 0pt}, scale = 0.5]

\begin{scope}
\draw[rotate = 45] (1,1) ellipse (3cm and 1.5cm);
\draw[rotate = -45] (1,-1) ellipse (3cm and 1.5cm);
\draw (4,0) circle (1.3cm);


\node (g) at (4,0) {$g$};
\node (h) at (1, -2) {$h$};
\node at (-3, 3) {$K_G$};
\node at (-3, -3) {$K_H$};
\node at (0, -5) {(i)};

\node (x1) at (-1,-0.5) {};
\node (x2) at (1,2) {};
\node (x3) at (-1,0.5) {};

\foreach \from/\to in {g/x1, g/x2, g/x3}{
  \draw[dashed] (\from) -- (\to);
}
\draw[dotted] (g) -- (h);
\foreach \from/\to in {x1/h, x3/h, x1/x3, x1/x2, x2/x3, x1/x3}{
  \draw[purple] (\from) -- (\to);
}
\foreach \from/\to in {x1/h, x3/h, x1/x3, x1/x2, x2/x3, x1/x3}{
  \draw[green, dashed] (\from) -- (\to);
}
\end{scope}

\begin{scope}[shift={(11,0)}]
\draw[rotate = 45] (1,1) ellipse (3cm and 1.5cm);
\draw[rotate = -45] (1,-1) ellipse (3cm and 1.5cm);
\draw (4,0) circle (1.3cm);

\node (g) at (4,0) {$g$};
\node (h) at (1, -2) {$h$};
\node at (-3, 3) {$K_G$};
\node at (-3, -3) {$K_H$};
\node at (0, -5) {(ii)};

\node (x1) at (-1,-0.5) {};
\node (x2) at (1,2) {};
\node (x3) at (-1,0.5) {};

\foreach \from/\to in {g/x1, g/x2, g/x3}{
  \draw[dashed, green] (\from) -- (\to);
}
\draw[dotted] (g) -- (h);
\foreach \from/\to in {x1/h, x3/h, x1/x3, x1/x2, x2/x3, x1/x3}{
  \draw[purple] (\from) -- (\to);
}
\foreach \from/\to in {x1/h, x3/h, x1/x3, x1/x2, x2/x3, x1/x3}{
  \draw[green, dashed] (\from) -- (\to);
}
\end{scope}

\begin{scope}[shift={(0,-11)}]
\draw[rotate = 45] (1,1) ellipse (3cm and 1.5cm);
\draw[rotate = -45] (1,-1) ellipse (3cm and 1.5cm);
\draw (4,0) circle (1.3cm);

\node (g) at (1,2) {$g$};
\node (h) at (1, -2) {$h$};
\node at (-3, 3) {$K_G$};
\node at (-3, -3) {$K_H$};
\node at (0, -5) {(iii)};

\node (x1) at (-1,0.5) {};
\node (x3) at (-1,-0.5) {};

\foreach \from/\to in {g/x1, g/x3}{
  \draw[dashed, green] (\from) -- (\to);
}
\draw[dotted] (g) -- (h);
\foreach \from/\to in {x1/h, x3/h, x1/x3, x1/x3}{
  \draw[purple] (\from) -- (\to);
}
\foreach \from/\to in {x1/h, x3/h, x1/x3, x1/x3}{
  \draw[green, dashed] (\from) -- (\to);
}
\end{scope}

\begin{scope}[shift={(11,-11)}]
\draw[rotate = 45] (1,1) ellipse (3cm and 1.5cm);
\draw[rotate = -45] (1,-1) ellipse (3cm and 1.5cm);
\draw (4,0) circle (1.3cm);

\node (g) at (1,2) {$g$};
\node (h) at (1, -2) {$h$};
\node at (-3, 3) {$K_G$};
\node at (-3, -3) {$K_H$};
\node at (0, -5) {(iv)};

\node (x1) at (0,2) {$x$};
\node (x2) at (0,2) {};
\node (x3) at (-1,0) {};

\foreach \from/\to in {g/x1, g/x2, g/x3}{
  \draw[dashed, green] (\from) -- (\to);
}
\draw[dotted] (g) -- (h);
\foreach \from/\to in {x1/h, x3/h, x1/x3, x1/x2, x2/x3, x1/x3}{
  \draw[purple] (\from) -- (\to);
}
\foreach \from/\to in {x1/h, x3/h, x1/x3, x1/x2, x2/x3, x1/x3}{
  \draw[green, dashed] (\from) -- (\to);
}
\end{scope}

\end{tikzpicture}
\caption{The final 4 cases in the proof of \cref{thm:split edges}.}
\label{fig: cases long proof}
\end{figure}

Note that (iv) is in the situation for whenever $|\ne_G(h)|\ge 2$.
Symetrically, We have the same cases for every uncolored edge $gh$ in $G$ that is not in $H$, with $h\in O_G, g\in K_G$. We might have to repeat this recursively to color all edges that can be colored.







The only uncolored $gh$ edges now are those that have exactly one other uncolored neighbor at each vertex, and no green neighbor at $h$ or purple neighbor at $g$.
The uncolored edges are therefore a union of cycles, let $C$ be one. 
If there is such a cycle we can then finish this case and construct witnesses as with the cycle at the end of Case IIIb. 
If there are no uncolored edges, we have, by a linear combination of the created cost functions, a cost function that shows $c_G,c_H$ forming an edge of the chordal polytope. 
\end{proof}

Importantly, the above shows that for large $n$, most pairs of vertices of $\CGP_n$ fulfill the rhombus criterion. 
We will utilize this later in \cref{sec: computations} to compute the edge structure of this polytope. 

\begin{remark} It would be desirable to give an explicit description of when $c_G,c_H$ are neighbors in $\CGP_n$ for split graphs $G,H$. From the long proof of \cref{thm:split edges} we can deduce that it happens
\begin{enumerate}
    \item if there exists a unique $x$ such that $x\in O_G\cup O_H$ with $\ne_G(x)\neq\ne_H(x)$ and both complete in both graphs (Case I and Case IIIa), 
    \item if $K_G\subsetneq K_H$ and $\ne_G(x)=\ne_H(x)$ for all $x\in O_H$ (Case IIa), 
    \item if $K_G\subsetneq K_H$ and $\forall x\in O_H,$ such that $ \ne_G(x)\neq\ne_H(x)$, $\ne_H(x)$ is not complete in $G$ (Case IIc), 
    \item if $\{S_G,S_H\}$ is not a compatible set and the coloring of edges in Case IIIc (i)-(iv) results in all edges colored either green or purple.
\end{enumerate}
The last condition is not as explicit as one would hope and it would be interesting if it could be formulated more succinctly.
\end{remark}

\begin{remark}
An alternative characterization of chordal graphs is that the maximal cliques form a so called junction tree (also known as clique tree), with edges for nontrivial intersections. The split graphs are then characterized by the junction tree being a star graph. One approach to try to generalize \cref{thm:split edges} to all chordal graphs could be to understand the polytopal edges in terms of the junction tree (star) and then generalize it to all junction trees. It would be interesting if this could lead to a proof of \cref{conj:chordal}.
\end{remark}
\subsection{The tree \texorpdfstring{$\CIM$}{CIM} polytope}
We will end this section with a non-trivial example of a 0/1-polytope that does not fulfill the rhombus criterion. 
We will however begin to show that many non-edges do fulfill the rhombus criterion, therefore it is possible to compute the edge-structure using the algorithms presented in \cref{sec: computations}. 

As \cite{RP87} and \cite{LRS22} considered all DAGs such that the skeleton are trees, we will consider the following polytope
\[
\CIMT_n\coloneqq \conv\left(c_\GG\colon \GG\text{ is a DAG with skeleton a tree} \right). 
\]

\begin{proposition}\label{prop:STP is face}
The spanning tree polytope $T_n$ is a face of $\CIMT_n$. 
\end{proposition}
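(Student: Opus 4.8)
The plan is to exhibit a cost function whose maximizing face over $\CIMT_n$ consists precisely of the imsets of DAGs with tree skeleton and no v-structures, and then to observe that this face is affinely isomorphic to $T_n$, exactly mirroring the proof that $T_n$ is a face of $\CGP_n$.

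First I would record the shape of $c_\GG$ for a DAG $\GG$ whose skeleton is a (spanning) tree $T$. On a $2$-set $\{i,j\}$ we have $c_\GG(\{i,j\})=1$ if and only if $ij$ is an edge of $T$, so the $2$-set coordinates of $c_\GG$ record exactly the skeleton, i.e.\ they equal $v_T$. On a set $S$ with $|S|\geq 3$, having $c_\GG(S)=1$ requires some $i\in S$ with $S\setminus\{i\}\subseteq\pa_\GG(i)$, which forces $i$ to have at least two parents inside $S$; since the skeleton is a tree, any two such parents are non-adjacent, so this is precisely a v-structure. Conversely a v-structure at $i$ with parents $j,k$ gives $c_\GG(\{i,j,k\})=1$. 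Hence $c_\GG$ vanishes on every set of size $\geq 3$ if and only if $\GG$ has no v-structure, in which case its imset depends only on the skeleton and equals the undirected (chordal) imset of $T$, namely the vector $(v_T,\zero)$ under the splitting of coordinates into $2$-sets and larger sets.

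Next I would take the cost function $c(S)=0$ for $|S|=2$ and $c(S)=-1$ for $|S|\geq 3$. Since every $c_\GG\geq\zero$, we get $\langle c,c_\GG\rangle=-\sum_{|S|\geq 3}c_\GG(S)\leq 0$, with equality exactly when $c_\GG(S)=0$ for all $|S|\geq 3$; by the previous paragraph these maximizers are exactly the v-structure-free tree-skeleton DAGs. Every spanning tree $T$ admits such an orientation (root $T$ and orient every edge away from the root, so each node has at most one parent and no v-structure can occur), and distinct v-structure-free orientations of the same tree yield the same imset. Consequently the maximizing face $F$ has vertices in bijection with spanning trees, each vertex being the point $(v_T,\zero)$.

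Finally I would close with the affine-equivalence step, following the same convention as in the $\CGP_n$ case. The whole face $F$ lies in the coordinate subspace on which every coordinate indexed by a set of size $\geq 3$ vanishes, so the coordinate projection $\pi$ onto the $\binom{[n]}{2}$ many $2$-set coordinates restricts to an affine isomorphism on $F$, and $\pi(F)=\conv(v_T\colon T\text{ a spanning tree})=T_n$. Thus $F\cong T_n$, which is what it means for $T_n$ to be a face of $\CIMT_n$. I expect there is no genuine obstacle here; the only content one must get right, rather than a difficulty, is the clean identification ``value $1$ on a $\geq 3$-set $\Leftrightarrow$ v-structure'' for tree skeletons, together with the observation that all v-structure-free orientations of a fixed tree collapse to a single imset, so that the face is genuinely (affinely) $T_n$ and not some larger polytope.
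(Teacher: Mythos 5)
Your proposal is correct and takes essentially the same route as the paper's own cost-function argument: the identical cost function ($0$ on $2$-sets, $-1$ on sets of size at least $3$), the same identification of its maximizers over $\CIMT_n$ with the v-structure-free tree DAGs, and the same observation that each such imset is $v_T$ padded with zeros, so the maximizing face is $T_n$. The only point worth flagging is editorial: in the paper this cost-function proof is printed under \cref{prop:CIMG is face} (the two proofs appear to have been transposed, since the text under \cref{prop:STP is face} instead argues via $\CIM_G$ being a face of $\CIM_n$), and your added details---the equivalence ``vanishing on all sets of size $\geq 3$ $\Leftrightarrow$ no v-structure'' for tree skeletons, and the affine identification of the face with $T_n$---are exactly what the paper compresses into its remark about embedding $v_T$ via $e_{\{i,j\}}=e_{ij}$.
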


\begin{proof}
Notice that $\CIMT_n$ is the convex hull off a subset of all vertices of $\CIM_n$ containing the vertices of $\CIM_G$ whenever $G$ is a tree. 
Therefore any cost function maximizing in $\CIM_G$ over $\CIM_n$ will also maximize in $\CIM_G$ over $\CIMT_n$. 
The rest follows from the fact that $\CIM_G$ is a face of $\CIM_n$ \cite[Corollary 2.5]{LRS20}.
\end{proof}

\begin{proposition}\label{prop:CIMG is face}
Let $G$ be a tree. 
Then $\CIM_G$ is a face of $\CIMT_n$.
\end{proposition}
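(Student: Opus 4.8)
The plan is to show that $\CIM_G$ is a face of $\CIMT_n$ by exhibiting a cost function on $\mathbb{R}^{2^n-n-1}$ that is maximized over $\CIMT_n$ exactly at the vertices coming from DAGs with skeleton $G$. The key leverage is \cref{prop:CIMG is face}'s companion, Proposition~\ref{prop:STP is face}, together with the fact (cited as \cite[Corollary 2.5]{LRS20}) that $\CIM_G$ is a face of the full polytope $\CIM_n$. Since $\CIMT_n$ is the convex hull of a \emph{subset} of the vertices of $\CIM_n$ — namely those whose skeleton is a tree — and since $G$ is assumed to be a tree, every vertex of $\CIM_G$ is also a vertex of $\CIMT_n$. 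This is the crucial containment that makes the argument go through.

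First I would recall the face-restriction principle already used in the proof of \cref{prop:STP is face}: if $c$ is a cost function maximizing over $\CIM_n$ exactly on the face $\CIM_G$, then because $\CIMT_n \subseteq \CIM_n$ (as polytopes, via inclusion of vertex sets) and because all maximizers of $c$ in $\CIM_n$ lie in $\CIM_G$ and are themselves vertices with tree skeleton $G$, the same $c$ maximizes over $\CIMT_n$ and its maximizing set is precisely the vertices of $\CIM_G$. Concretely, let $c$ be any cost function witnessing that $\CIM_G$ is a face of $\CIM_n$. For every vertex $c_\GG$ of $\CIMT_n$ we have $\langle c, c_\GG\rangle \le \max_{\CIM_n}\langle c, \cdot\rangle$, with equality if and only if $c_\GG \in \CIM_G$. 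Since the vertices of $\CIM_G$ all have tree skeleton $G$, they are genuinely vertices of $\CIMT_n$, so the maximum is attained and the maximizing face of $\CIMT_n$ is exactly $\CIM_G$.

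Thus the main step is simply to invoke \cite[Corollary 2.5]{LRS20} to obtain the defining cost function $c$ for $\CIM_G$ as a face of $\CIM_n$, then observe that restricting the ambient polytope from $\CIM_n$ to its subpolytope $\CIMT_n$ can only shrink the set of maximizers, and in this case does not discard any of them since all maximizers already have skeleton $G$. The one point requiring a small verification is that no vertex of $\CIMT_n$ outside $\CIM_G$ achieves the maximum value of $c$; but such a vertex would also be a vertex of $\CIM_n$ achieving the maximum, contradicting that $c$ defines $\CIM_G$ as a face there.

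The main obstacle, such as it is, is purely bookkeeping: one must be careful that the vertices of $\CIM_G$ really do appear among the vertices of $\CIMT_n$, which hinges on $G$ being a tree (so that every DAG with skeleton $G$ is a DAG with tree skeleton, hence contributes a vertex to $\CIMT_n$). There is no genuine combinatorial difficulty here — unlike the long proof of \cref{thm:split edges} — and the argument is a clean consequence of the general fact that a face of a polytope $P$ that is spanned by vertices lying in a subpolytope $P'$ remains a face of $P'$, provided the defining cost function's maximizers over $P$ all lie in $P'$.
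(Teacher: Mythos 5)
Your proof is correct, and it is essentially the paper's own argument for this statement: since $G$ is a tree, every vertex of $\CIM_G$ is a vertex of $\CIMT_n$, and $\CIMT_n$ is spanned by a subset of the vertices of $\CIM_n$, so any cost function exhibiting $\CIM_G$ as a face of $\CIM_n$ (via \cite[Corollary 2.5]{LRS20}) still attains its maximum over $\CIMT_n$ precisely on $\CIM_G$. One caveat on the comparison: in the paper's source the proofs of \cref{prop:STP is face} and \cref{prop:CIMG is face} appear to have been interchanged --- the text printed under \cref{prop:CIMG is face} constructs the cost function $c(S)=0$ for $|S|=2$ and $c(S)=-1$ otherwise, whose maximizers over $\CIMT_n$ are exactly the imsets of v-structure-free DAGs, i.e.\ the vertices of $T_n$, so that text actually proves \cref{prop:STP is face}; the restriction argument you give is the one printed under \cref{prop:STP is face}, which is the paper's intended proof of the present proposition.
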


\begin{proof}
As for the spanning tree polytope; define the cost function 
\[
c(S)=\begin{cases}
0 & \text{ if } |S| = 2\\
-1 & \text{ otherwise.}
\end{cases}
\]
Let $\GG$ be a DAG maximizing $\langle c, c_\GG\rangle$ over $\CIMT_n$ with skeleton $G$. . 
As there exists DAGs $\DD$ with $\langle c, c_\DD\rangle=0$ and $c$ is a non-positive vector, we must have $\langle c, c_\GG\rangle=0$.
That is $c_\GG(S)=0$ for all $|S|\geq 3$. 
We note that we have $c_\GG(\{i,j\})=(v_G)_{ij}$ for every pair of vertices $i, j$. 
This gives us $c_\GG=v_G$ when we consider $v_G$ embedded in the same space as $c_\GG$ with the identification of $e_{\{i,j\}} = e_{ij}$. 
Thus the set of vertices maximizing $c$ over $\CIMT_n$ are exactly those that can be interpreted as vertices of $T_n$. 
The result follows. 
\end{proof}
\begin{proposition}
Let $G$ be an undirected tree and $\GG$ be a DAG with skeleton $G$ and no v-structures. 
The neighbors of $c_\GG$ in $\CIMT_n$ can be described as follows.
\begin{enumerate}
\item{ 
As in Spanning tree polytope. That is, one undirected edge is added and one edge is removed to break the created cycle.}
\item New v-structure(s), no new edge.
As in $\CIM_G$. That is, a vertex $i$ is getting two or more existing edges directed towards it creating v-structures at $i$.
\item New edge and new v-structure
For a vertex $i$ new edges (at least 1) are added, all directed towards $i$. As many edges as added are removed to break all created cycles. Existing edges in $G$ incident to $i$ that remain are given one of the two possible directions. If there is at least one v-structure at $i$ this is a neighbour.
Unless (triangle exception) exactly one new edge $j \to i$ is added with the edge $j  \ell$ removed and the edge $i  \ell$ is the only edge given direction towards $i$ among existing edges.
\end{enumerate}
\end{proposition}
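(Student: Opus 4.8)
The plan is to prove both inclusions: every modification of type (1)--(3) produces a neighbour (outside the triangle exception), and conversely every DAG $\DD$ with $c_\DD$ adjacent to $c_\GG$ arises from such a move. The backbone is the two faces already identified. By \cref{prop:STP is face} the spanning tree polytope $T_n$ is a face of $\CIMT_n$, and by \cref{prop:CIMG is face} each $\CIM_G$ with $G$ a tree is a face; since $\GG$ has no v-structures, $c_\GG$ lies in both. By \cref{thm: edge of face} a pair in a common face is an edge of $\CIMT_n$ iff it is an edge of that face. Hence if $\DD$ is again v-structure free (so $c_\DD\in T_n$) adjacency is governed by \cref{thm: edges of spanning tree polytope}, giving exactly the single edge-swap of type (1); and if $\DD$ has the same skeleton $G$ (so $c_\DD\in\CIM_G$) adjacency is governed by \cref{thm: CIMG is rhombus} and the edge description of $\CIM_G$ from \cite{LRS22}, giving exactly the v-structures-at-one-vertex of type (2). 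This settles the two pure cases and isolates type (3), where the skeleton changes and new v-structures appear simultaneously, as the only remaining work.

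Second, for the mixed case I would establish that adjacency forces the modification to be concentrated at a single apex $i$, which is what makes the description finite and local. The tool is the rhombus criterion (\cref{lem: square criterion}): if the changes taking $\GG$ to $\DD$ decompose into two parts supported on disjoint neighbourhoods --- for instance new in-edges at two distinct apices $i_1\neq i_2$, or a cycle-breaking edge swap independent of a created v-structure --- then applying one part gives a graph $\GG'$ and the other a graph $\DD'$, both with tree skeletons, with $c_\GG+c_\DD=c_{\GG'}+c_{\DD'}$; thus $\GG',\DD'$ are witnesses and $c_\GG,c_\DD$ is a non-edge. Making this precise reduces every candidate type-(3) edge to a single-apex move: new edges all directed into $i$, equally many edges deleted to keep a forest, and an orientation chosen for the surviving edges at $i$. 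The same reduction shows a v-structure at $i$ is necessary, since otherwise the move is either a pure edge-swap (type (1)) or splits again.

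Third, I would certify the surviving single-apex moves. Because $\CIMT_n$ does not fulfil the rhombus criterion, the mere absence of witnesses does not prove adjacency, so for each genuine type-(3) move I would exhibit a cost function maximised exactly on $\{c_\GG,c_\DD\}$, built by rewarding the shared forest edges, the new in-edges at $i$, and the created size-three sets while penalising every larger or competing clique, in the spirit of the cost functions in \cref{prop:STP is face} and \cref{prop:CIMG is face}. The hard part, and the reason the exception is stated separately, is the triangle case: when a single new edge $j\to i$ is added, the edge $j\ell$ is removed, and $i\ell$ is the unique old edge oriented into $i$, the set $\{i,j,\ell\}$ can be realised as a v-structure both at $i$ (on the skeleton $G-j\ell+ji$) and at $\ell$ (on the original skeleton $G$, where $\ell$ is adjacent to both $i$ and $j$). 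This yields genuine witnesses $\GG'$ (skeleton $G$, v-structure at $\ell$) and $\DD'$ (skeleton $G-j\ell+ji$, no v-structure) with $c_{\GG'}+c_{\DD'}=c_\GG+c_\DD$, so the pair is a non-edge and must be excluded. I expect the main obstacle to be the bookkeeping showing this triangle configuration is the only single-apex move admitting such a relocatable v-structure, so that outside it the cost-function certificate always succeeds.
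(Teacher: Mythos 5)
Your outline follows the same architecture as the paper's proof: cases (1) and (2) are dispatched by \cref{prop:STP is face}, \cref{prop:CIMG is face} and \cref{thm: edge of face}; case (3) adjacency is to be certified by an explicit cost function; and both the triangle exception and the converse direction (``only these are neighbours'') are handled via the rhombus criterion, \cref{lem: square criterion}. Your witnesses for the triangle exception are exactly the paper's (the DAG on skeleton $G$ with $i \to \ell \leftarrow j$, and the DAG with undirected path $j - i - \ell$ on the swapped skeleton), and that part of your argument is correct.

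However, the two steps you defer are where the real content lies, and one of them, as you state it, would fail. For the completeness direction you propose to split a multi-collider $\HH$ into ``two parts supported on disjoint neighbourhoods'' and take each part as a witness. That premise is generally false: the new in-edges at an apex can have their far endpoints anywhere in the tree, and the cycle-breaking deletions need not be incident to any apex, so the modification does not decompose into pieces with disjoint support; worse, applying only one ``part'' typically leaves a disconnected skeleton, so your $\GG'$ and $\DD'$ need not be vertices of $\CIMT_n$ at all. The paper's construction addresses precisely this obstruction: it picks a collider $i$ of $\HH$ having no collider as ancestor, builds $\DD_1$ from $\GG$ by giving $i$ its parents in $\HH$, and then repairs the connectivity of $\DD_2$ by transferring specific edges $f_1,\dots,f_r$ (on each cycle created in $\DD_1$, the first edge leaving the corresponding cut-off subtree $T_k$) from $\DD_1$ to $\DD_2$, re-orienting edges inside the collider-free subtrees so that no new v-structures arise. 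This exchange is what makes both witnesses spanning trees whose joint edge multiset equals that of $\GG$ and $\HH$; without it the rhombus pairing you invoke need not exist. Similarly, for case (3) you only gesture at a cost function ``in the spirit of'' the face certificates, but your own closing remark --- that outside the triangle exception ``the cost-function certificate always succeeds'' --- is exactly the claim requiring proof. The paper proves it with a delicately balanced weight (the value $2^{m}-m+2|X|-2$ on the single set $\pa_\HH(i)\cup\{i\}$, graded weights $2$, $1$, $-1$ on pairs, and penalties $-1$, $-10$ on all other large sets) followed by a case analysis showing no third DAG attains the maximum, an analysis that breaks down precisely when $m=2$, which is the triangle exception. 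So your plan is the right one, but the two constructions it leaves open constitute the proof.
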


Note that (2) can be considered a subcase of (3).
\begin{proof}
For the first case we know by Proposition \ref{prop:STP is face} that the edges in the undirected case are the same as the edges of the spanning tree polytope. 
Similarly, in the second case we have by Proposition \ref{prop:CIMG is face} that the neighbors of $\GG$ with identical underlying tree follows from the essential flips in $\CIM_G$ as proved in \cite{LRS22}.

To prove that the DAGs in case (3) are neighbors we define a cost function that is maximized only by $\GG$ and the proposed neighbor $\HH$. 
For a DAG $\HH$ satisfying (3) let $X$ be the vertices with an edge to $i$ in $\HH$ but not in $\GG$. Note that $|X|\ge 1$. Let $m=|\pa_\HH(i)|\ge 2$.
\[
w(S)=\begin{cases}
2 & \text{ if } S = \{j,k\} \text{ and } jk\in\GG\cap \HH \\
1 & \text{ if } S = \{j,k\} \text{ and } jk\in\GG \setminus \HH\\
-1 & \text{ if } S = \{j,k\} \text{ and } jk\notin\GG \\
2^{m}-m+2|X|-2 & \text{ if } S = \pa_\HH(i)\cup \{i\}\\
-1 & \text{ if } |S|\ge 3 \text{ and } i\in S\neq \pa_\HH(i)\cup \{i\}\\
-10 & \text{ if } |S|\ge 3 \text{ and } i\notin S\\
\end{cases}
\]

The number of edges of the underlying tree is $n-1$, so $\langle w, c_\GG\rangle=2(n-1)-|X|$. Since the underlying graphs of $\GG$ and $\HH$ differ by $|X|$ edges and $c_\HH$ will have -1 for every set $S\cup\{i\}$, 
when $S\subseteq \pa_\HH(i)$ thus the term $2^m-(m-|X|)-2$ ensures that 
$\langle w, c_\HH\rangle$ is also $2(n-1)-|X|$. 
Any DAG $\DD\in \CIMT$ different from $\GG$ has either an edge not in $G$ or a v-structure which gives a negative contribution to the cost function. 
If $\langle w, c_\DD\rangle\ge\langle w, c_\GG\rangle$ we must thus have the positive weight $2^{m}-(m-|X|)-2$. 
If the common collider for this set in $\DD$ is $i$ we get 
$\pa_\DD(i)\subseteq \pa_\HH(i)$. In fact there must be equality since extra parents would give extra v-structures and thus negative weights. Since the edges in 
$\GG\cap \HH$ are given higher weight the only way to achieve maximal score is if $\DD$ is the same markov equivalence class as $\HH$.
If the common collider in $\DD$ is an other vertex and $m\ge 3$; then there will be at least one v-structure not including $i$ which lowers the total weight, 
that is $\langle w, c_\DD\rangle<\langle w, c_\HH\rangle$.

If $m=2$ we are in the situation of the triangle exception and the cost function will not be maximized by $\GG$ and $\HH$. In fact, for the triangle exception there exist two graphs as witnesses fulfilling the rhombus criteria, proving by Lemma \ref{lem: square criterion} that $\GG$ and $\HH$ do not form an edge in the polytope. 
Namely, the graphs that agree with $G$ except for the edges between $i,j,\ell$, in the following way. 
The first graph has $i \to \ell \leftarrow j$ and the second has the edges $j - i - \ell$. 
Recall that $G$ has $i - \ell - j$ and the graph in the triangle exception has $j \to i \leftarrow \ell$.

To prove that these are the only neighbors we must show that if 
$\HH$ has more than 2 colliders or one collider
$i$ but also other unrelated edges that differ from $\GG$.
Let $i$ be a collider in $\HH$ that has no other collider as ancestor. We will now form two witnesses for the rhombus criteria. 
To construct $\DD_1$ we start from $\GG$ and first direct and add the edges so $i$ gets the same parents as in $\HH$. To construct $\DD_2$ we start from $\HH$ and do the opposite operation, i.e. reversing some edges and removing some $e_1,\dots,e_r$, which will create some disconnected subtrees $T_1,\dots,T_r$. Since no ancestors of $i$ in $\HH$ were colliders we may further direct edges in those branches of the DAG without creating any v-structures. We must also add edges in $\DD_2$ to connect the currently disjoint subtrees $T_1,\dots,T_r$. 
The edge $e_1$ creates a cycle when added to $\GG$, let $f_1$ be the first edge (starting from $e_1$ moving away from $i$ towards $T_1$) of that cycle that does not belong to $T_1$. We now remove $f_1$ from $\DD_1$ and add it to $\DD_2$. In $\DD_2$ it will connect $T_1$ with the main component or with some other $T_j$ to form $T_j'$. The tree $T_1$ contains no collider in $\HH$ and we can thus direct $f_1$ towards $T_1$ and redirect the edges therein without changing the set of v-structures. 
Now we repeat this construction with $e_2,\dots,e_r$ possibly with $T_j$ enlarged to $T_j'$.
This construction gives two DAGs $\DD_1$ and $\DD_2$ with the joint multiset of edges the same as that of $\GG$ and $\HH$. Since $\DD_1$
will contain the v-structures of $\HH$ centered at $i$ and $\DD_2$ all the rest (if any), we can conclude that the rhombus criteria is satisfied.
\end{proof}

\begin{example}
\label{ex: cimt6 not square}
While one might hope that also $\CIMT_n$ would fulfill the rhombus criterion this is only true for $n\leq 5$. 
In \cref{fig: cimt not square} we give five essential graphs whose characteristic imsets spans a 3-dimensional face of $\CIMT_6$. 
A straightforward computation shows that the top two graphs does not form an edge of $\CIMT_6$ and it can be checked that they do not have witnesses.
In \cref{sec: computations} we explain how these examples can be found. 
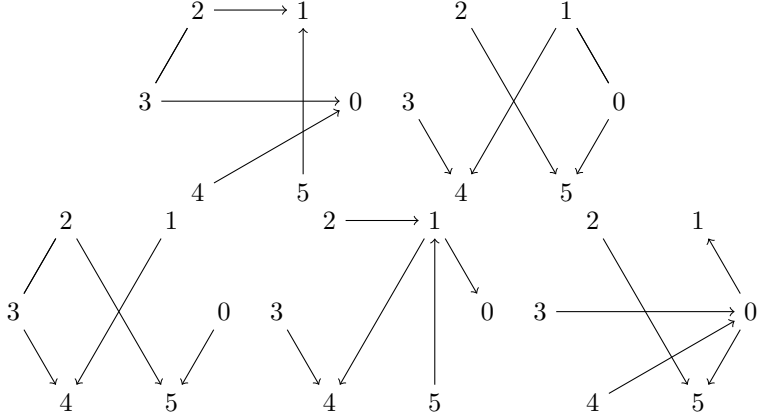
\begin{figure}
\[
\begin{tikzpicture}[scale=0.7]
\begin{scope}
\foreach \i in {0,...,5}{\node (p\i) at (60.0*\i:2) {$\i$};}
\draw[->] (p2) to (p1);
\draw (p2) to (p3);
\draw[->] (p3) to (p0);
\draw (p3) to (p2);
\draw[->] (p4) to (p0);
\draw[->] (p5) to (p1);
\end{scope}
\begin{scope}[shift={(5,0)}]
\foreach \i in {0,...,5}{\node (p\i) at (60.0*\i:2) {$\i$};}
\draw (p0) to (p1);
\draw[->] (p0) to (p5);
\draw (p1) to (p0);
\draw[->] (p1) to (p4);
\draw[->] (p2) to (p5);
\draw[->] (p3) to (p4);
\end{scope}

\begin{scope}[shift={(-2.5,-4)}]
\foreach \i in {0,...,5}{\node (p\i) at (60.0*\i:2) {$\i$};}
\draw[->] (p0) to (p5);
\draw[->] (p1) to (p4);
\draw (p2) to (p3);
\draw[->] (p2) to (p5);
\draw (p3) to (p2);
\draw[->] (p3) to (p4);
\end{scope}
\begin{scope}[shift={(2.5,-4)}]
\foreach \i in {0,...,5}{\node (p\i) at (60.0*\i:2) {$\i$};}
\draw[->] (p1) to (p0);
\draw[->] (p1) to (p4);
\draw[->] (p2) to (p1);
\draw[->] (p3) to (p4);
\draw[->] (p5) to (p1);
\end{scope}
\begin{scope}[shift={(7.5,-4)}]
\foreach \i in {0,...,5}{\node (p\i) at (60.0*\i:2) {$\i$};}
\draw[->] (p0) to (p1);
\draw[->] (p0) to (p5);
\draw[->] (p2) to (p5);
\draw[->] (p3) to (p0);
\draw[->] (p4) to (p0);
\end{scope}
\end{tikzpicture}
\]

\caption{Five graphs that spans a face of $\CIMT_6$. This face does not fulfill the rhombus criterion and hence $\CIMT_6$ does not either.}
\label{fig: cimt not square}
\end{figure}
\end{example}

\section{Examples of the rhombus criterion}
\label{sec: examples}
For a polytope to fulfill the rhombus criterion is indeed very restrictive when considering polytopes in general. 
However, when considering common representations of some polytopes it keeps on appearing. 
We have already seen examples in \cref{prop: STP is rhombus}, \cref{cor: Cube is rhombus}, \cref{thm: CIMG is rhombus} and \cref{prop: fixed order CIM is rhombus} of polytopes fulfilling the rhombus criterion. In this section, we will consider some other polytopes from the literature that also turn out to fulfill the rhombus criterion. 
We will begin with a very direct example, namely the \emph{cross polytope} $C_n\coloneqq\conv(\pm e_i\colon 1\leq i\leq n)$.

\begin{proposition}
The rhombus criterion is true for the cross polytope, $C_n$.
\end{proposition}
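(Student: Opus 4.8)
The plan is to split the $2n$ vertices $\{\pm e_i\}$ into two types of pairs and treat each separately: the \emph{antipodal} pairs $\{e_i,-e_i\}$, and all remaining \emph{non-antipodal} pairs. I would show that every non-antipodal pair is already an edge of $C_n$, so it fulfills the rhombus criterion automatically, while every antipodal pair is a non-edge that admits witnesses. It is cleanest to assume $n\ge 2$ throughout and dispose of $n\le 1$ by hand at the end, since there $C_n$ is a point or a segment.

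For a non-antipodal pair I would write $\alpha=\epsilon_i e_i$ and $\beta=\epsilon_j e_j$ with $\epsilon_i,\epsilon_j\in\{\pm1\}$, and note that non-antipodality forces $i\ne j$ (if $i=j$ and $\alpha\ne\beta$ then $\beta=-\alpha$, an antipodal pair). I would then exhibit a cost function selecting exactly these two vertices: take $c$ with $c_i=\epsilon_i$, $c_j=\epsilon_j$, and $c_k=0$ for all other $k$. Every vertex satisfies $\langle c,\pm e_k\rangle\in\{-1,0,1\}$, the value $1$ is attained only at $\alpha$ and $\beta$ (the remaining coordinate directions contribute $0$, and the antipodes $-\alpha,-\beta$ contribute $-1$), so $c$ is maximized precisely on $\conv(\alpha,\beta)$. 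Hence this segment is a face, i.e.\ an edge, and the rhombus criterion holds for such a pair.

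For an antipodal pair $\{e_i,-e_i\}$ I would use that $e_i+(-e_i)=0$, and since $n\ge 2$ there is an index $j\ne i$ with $e_j+(-e_j)=0$ as well. Taking $\alpha'=e_j$ and $\beta'=-e_j$, which lie in $V\setminus\{e_i,-e_i\}$, gives $\alpha'+\beta'=\alpha+\beta$, so these are witnesses in the sense of \cref{lem: square criterion}. Thus every antipodal pair fulfills the rhombus criterion through witnesses (and is in fact a non-edge).

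The only genuine subtlety, and hence the main point to be careful about, is the bookkeeping at the boundary: one must require $n\ge 2$ for a second coordinate direction to supply the witnesses, and confirm that the chosen witnesses are distinct from $\alpha,\beta$, which is immediate. For $n=1$ the polytope is the segment $[-1,1]=\conv(e_1,-e_1)$, whose single pair is an edge and so fulfills the criterion trivially; combining the two regimes completes the argument.
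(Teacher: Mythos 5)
Your proof is correct and follows essentially the same route as the paper: the antipodal pairs $\{e_i,-e_i\}$ are the only non-edges, and they receive the witnesses $e_j,-e_j$ for $j\neq i$ via $e_i+(-e_i)=0=e_j+(-e_j)$. The only difference is that you verify explicitly (with a cost function) that non-antipodal pairs are edges, a fact the paper simply takes as known, and you handle the degenerate case $n=1$; both are harmless additions of detail rather than a different argument.
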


\begin{proof}
The only non-edges of $C_n$ are $e_i$, $-e_i$ for all $1\leq i\leq n$. 
Then we have for all $j\neq i$ that $e_i+(-e_i)=e_j+(-e_j)$. 
Hence $e_i$, $-e_i$ fulfills the rhombus criterion with $e_j$, $-e_j$ as witnesses. 
\end{proof}

Another, slightly harder, example is the \emph{Birkhoff polytope} $B_n$ that is commonly defined as the convex hull of all $n\times n$ permutation matrices. 

\begin{proposition}
The rhombus criterion is true for the Birkhoff polytope, $B_n$.
\end{proposition}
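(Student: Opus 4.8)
The plan is to characterize the non-edges of the Birkhoff polytope $B_n$ and exhibit explicit witnesses of the rhombus criterion for each such pair. Recall that the edges of $B_n$ are well understood: two permutation matrices $P_\sigma$ and $P_\tau$ form an edge if and only if the permutation $\sigma^{-1}\tau$ is a single cycle (equivalently, the symmetric difference of their supports, viewed as a union of disjoint cycles in the bipartite graph on rows and columns, is a single cycle). Thus I would first reduce the general case to this combinatorial picture by considering $P\coloneqq P_\sigma$ and $Q\coloneqq P_\tau$ and looking at the permutation $\pi=\sigma^{-1}\tau$, whose disjoint cycle decomposition governs everything.

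The key step is then the following. Suppose $P_\sigma, P_\tau$ is a non-edge, so $\pi=\sigma^{-1}\tau$ decomposes into at least two nontrivial disjoint cycles, say $\pi=\gamma_1\gamma_2\cdots\gamma_k$ with $k\geq 2$ (fixed points contribute nothing and may be ignored). Write $\pi=\gamma_1\cdot\rho$ where $\rho=\gamma_2\cdots\gamma_k$ acts on a disjoint set of indices. I would then set
\[
\sigma'=\sigma\gamma_1,\qquad \tau'=\sigma\rho=\tau\gamma_1^{-1}.
\]
Since $\gamma_1$ and $\rho$ act on disjoint index sets, the matrices $P_{\sigma'}$ and $P_{\tau'}$ are genuine permutation matrices distinct from both $P_\sigma$ and $P_\tau$ (because each of $\gamma_1,\rho$ is a nontrivial permutation and they move disjoint coordinates). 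The main verification is the additive identity $P_{\sigma}+P_{\tau}=P_{\sigma'}+P_{\tau'}$: entrywise, for each row $i$, $\sigma$ places a $1$ in column $\sigma(i)$ and $\tau=\sigma\pi$ places a $1$ in column $\sigma(\pi(i))$, while $\sigma'=\sigma\gamma_1$ and $\tau'=\sigma\rho$ place $1$'s in columns $\sigma(\gamma_1(i))$ and $\sigma(\rho(i))$. Because $\gamma_1$ and $\rho$ move disjoint sets and $\pi=\gamma_1\rho$, for every index $i$ the unordered pair $\{\gamma_1(i),\rho(i)\}$ equals the unordered pair $\{i,\pi(i)\}$ (if $i$ is moved by $\gamma_1$ then $\rho(i)=i$ and $\gamma_1(i)=\pi(i)$; symmetrically if moved by $\rho$; and if fixed by both then $i=\pi(i)$), so the two sides agree row by row. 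This gives $P_\sigma+P_\tau=P_{\sigma'}+P_{\tau'}$ as desired.

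The main obstacle is purely bookkeeping: one must confirm that $P_{\sigma'}$ and $P_{\tau'}$ are distinct from $P_\sigma$ and $P_\tau$, which is where the hypothesis $k\geq 2$ (two disjoint nontrivial cycles) is essential — with a single cycle the only available splitting would return the original pair, matching exactly the edge criterion of $B_n$. I would therefore emphasize that the splitting $\pi=\gamma_1\cdot\rho$ into two nonempty disjoint factors exists precisely when $P_\sigma, P_\tau$ fails to be an edge, so the witnesses are always available in the non-edge case. With the identity and the distinctness verified, $P_{\sigma'}$ and $P_{\tau'}$ serve as witnesses of the rhombus criterion, completing the proof.
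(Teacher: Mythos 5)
Your proposal is correct and follows essentially the same route as the paper: both use the characterization that $P_\sigma, P_\tau$ is an edge iff $\sigma^{-1}\tau$ is a single cycle, split a non-edge's quotient permutation into two disjoint nontrivial parts $\gamma_1$ and $\rho$, and take $P_{\sigma\gamma_1}$, $P_{\sigma\rho}$ as witnesses. The only cosmetic difference is that you verify the identity $P_\sigma+P_\tau=P_{\sigma\gamma_1}+P_{\sigma\rho}$ entrywise, while the paper reduces it to $I+P_{\gamma_1\rho}=P_{\gamma_1}+P_{\rho}$ and checks it by a block-matrix decomposition; you are also slightly more careful in verifying that the witnesses are distinct from the original pair.
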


This also follows as a corollary from \cref{prop: k-assignment square}, we will however first present a more elementary proof that hopefully is easier for the reader. 

\begin{proof}
We know that $(P_\sigma, P_\omega)$ is an edge of $B_n$ if and only if $\sigma^{-1}\omega$ is a cycle. 
Thus assume this is not true, that is assume that $\sigma^{-1}\omega=\pi_1\pi_2$ for some disjoint permutations $\pi_1$ and $\pi_2$.
Then we wish to show that 
\[
P_\sigma+P_\omega = P_{\sigma\pi_1} + P_{\sigma\pi_2}. 
\]
Using our equality $\sigma^{-1}\omega=\pi_1\pi_2$, the above is equivalent to $I + P_{\pi_1\pi_2} = P_{\pi_1}+P_{\pi_2}$. 
However as $\pi_1$ and $\pi_2$ are disjoint this is the equality (up to some abuse of notation) 
\[
\begin{pmatrix} I&0\\0&I\end{pmatrix} + \begin{pmatrix} P_{\pi_1} &0\\0&P_{\pi_2} \end{pmatrix}
= \begin{pmatrix} P_{\pi_1}&0\\0&I\end{pmatrix} + \begin{pmatrix} I &0\\0&P_{\pi_2} \end{pmatrix}.
\]
\end{proof}

The $k$-assignment polytope $M_{m,n,k}$ was studied in \cite{GL09} and is defined as the convex hull over all 0/1-matrices of size $m\times n$ with exactly $k$ number of 1, such that no row or column has more than one 1. 
This polytope is a natural generalization of the Birkhoff polytope; when $m=n=k$ then $M_{m,n,k} = B_n$. 
Let us consider $k$-matchings in the complete graph $K_{m, n}$, that is subsets $\mathcal{M}$ of the edges in $K_{m,n}$ of size $k$. 
There is a natural bijection between the vertices of $M_{m,n,k}$ and these matchings as $ij\in \mathcal{M}$ if and only if $\alpha_{i,j} =1$, where $\alpha$ is a vertex of $M_{m,n,k}$. 
Then we will say that a path $P$ is \emph{alternating} (with respect to the two matchings $\mathcal{M}_1$ and $\mathcal{M}_2$) if every other edge of $P$ lies in $\mathcal{M}_1$ and the rest in $\mathcal{M}_2$. 
Then we can restate Theorem 13 in \cite{GL09} as follows. 

\begin{theorem} \cite{GL09} 
Let $\alpha$ and $\beta$ be vertices of $k$-assignment polytope $M_{m,n,k}$ and $\mathcal{M}_\alpha$, $\mathcal{M}_\beta$ the corresponding $k$-matchings. 
Then the pair $\alpha$, $\beta$ is an edge of $M_{m,n,k}$ if and only if the $(\mathcal{M}_\alpha\cup \mathcal{M}_\beta)\setminus(\mathcal{M}_\alpha\cap \mathcal{M}_\beta)$ is the disjoint union of at most two alternating paths whose union contains the same number of edges from $\mathcal{M}_\alpha$ and $\mathcal{M}_\beta$.
\end{theorem}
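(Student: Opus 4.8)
The plan is to study the symmetric difference $\Delta := (\mathcal{M}_\alpha\cup\mathcal{M}_\beta)\setminus(\mathcal{M}_\alpha\cap\mathcal{M}_\beta)$ as a subgraph of $K_{m,n}$ and to read off the edge structure from its connected components. First I would record the standard fact that, since each vertex of $K_{m,n}$ meets at most one edge of $\mathcal{M}_\alpha$ and at most one edge of $\mathcal{M}_\beta$, every vertex has degree at most two in $\Delta$; hence $\Delta$ is a vertex-disjoint union of alternating paths and alternating cycles, and the endpoints of the paths are exactly the vertices saturated by one of the two matchings but not the other (such a vertex cannot lie on a common edge). Writing $a_i$ and $b_i$ for the number of $\mathcal{M}_\alpha$- and $\mathcal{M}_\beta$-edges in component $i$, a cycle or an even path has $a_i=b_i$ while an odd path has $|a_i-b_i|=1$, and the global balance $\sum_i a_i=\sum_i b_i = k-|\mathcal{M}_\alpha\cap\mathcal{M}_\beta|$ holds automatically.

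For the direction producing non-edges I would apply \cref{lem: square criterion} directly. Given a subcollection $T$ of components, re-orient each component in $T$ by exchanging its $\mathcal{M}_\alpha$-edges for its $\mathcal{M}_\beta$-edges; because the components are vertex-disjoint this always yields two matchings $\mathcal{M}_{\alpha'},\mathcal{M}_{\beta'}$ with $\alpha'+\beta'=\alpha+\beta$, and they still have exactly $k$ edges precisely when $\sum_{i\in T}(a_i-b_i)=0$. Thus whenever some non-empty proper subcollection $T$ is balanced in this sense — in particular whenever $\Delta$ contains a cycle, or an even path together with any further component, or more than two odd paths — we obtain witnesses with $\{\alpha',\beta'\}\neq\{\alpha,\beta\}$, so $\alpha,\beta$ is not an edge. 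This is the step that simultaneously exhibits the rhombus criterion for the $k$-assignment polytope.

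For the converse I would show that when no non-empty proper balanced subcollection exists, the segment $\conv(\alpha,\beta)$ is genuinely a face. In the spirit of the component analysis used for $\CGP_n$, the key claim is that the minimal face $F$ containing $\conv(\alpha,\beta)$ is spanned by exactly those $k$-matchings $\mathcal{M}_x$ that contain every common edge, avoid every edge outside $\mathcal{M}_\alpha\cup\mathcal{M}_\beta$, and restrict on each component of $\Delta$ to one of its two alternating sub-matchings. I would cut this out with a cost vector that rewards common edges, heavily penalises all edges outside $\mathcal{M}_\alpha\cup\mathcal{M}_\beta$, and carries a within-component potential penalising any switch between the two alternating selections. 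Every vertex of $F$ is then encoded by a choice of orientation per component subject to the cardinality constraint, i.e. by a subcollection $T$ with $\sum_{i\in T}(a_i-b_i)=0$, so the absence of a proper balanced $T$ forces $F=\conv(\alpha,\beta)$ and hence an edge.

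The main obstacle I expect lies in the converse. One must verify that the cost vector really isolates the alternating selections on each component, ruling out \emph{mixed} matchings such as a set of pairwise non-adjacent edges of both parities along a single even path, and one must then translate the clean algebraic condition ``no non-empty proper subcollection $T$ has $\sum_{i\in T}(a_i-b_i)=0$'' into the geometric phrasing of the theorem. The translation is a short parity argument — the only balanced configurations admitting no proper balanced subcollection are a single component or exactly two odd paths of opposite excess — but reconciling the boundary cases (a lone cycle, a lone even path, and the degenerate $m=n=k$ Birkhoff situation in which $\Delta$ consists only of cycles) with the statement ``at most two alternating paths'' is precisely where the care is needed.
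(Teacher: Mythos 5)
First, a point of order: the paper never proves this statement. It is quoted as Theorem 13 of \cite{GL09} and is then \emph{used} as an ingredient in the proof of \cref{prop: k-assignment square}, so your attempt can only be judged against the cited result and the mathematics itself. Your structural core is sound, and it is in fact the same swapping construction that the paper runs inside the proof of \cref{prop: k-assignment square}: the symmetric difference $\Delta=(\mathcal{M}_\alpha\cup\mathcal{M}_\beta)\setminus(\mathcal{M}_\alpha\cap\mathcal{M}_\beta)$ is a vertex-disjoint union of alternating paths and cycles; exchanging the two matchings on any non-empty proper balanced subcollection of components yields witnesses, hence a non-edge by \cref{lem: square criterion}; and your parity count showing that the surviving configurations are exactly ``one component'' or ``two odd paths of opposite excess'' is correct. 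The gap you flag in your converse is real but fixable, and without cost-function machinery: in any convex representation of the midpoint $\tfrac12(\alpha+\beta)$, whose coordinates equal $\tfrac12$ on all of $\Delta$, each participating matching contains at most one edge of every adjacent pair of $\Delta$-edges while the weighted average over that pair equals $1$, so it contains exactly one; walking along each path or cycle, this forces every vertex appearing in the representation to restrict to one of the two alternating selections on every component, which is precisely what you wanted your cost vector to accomplish.

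The genuine gap is elsewhere, and you only half-see it: the characterization your argument establishes is \emph{not equivalent} to the statement as transcribed, so no amount of ``care'' at the boundary cases can turn your argument into a proof of that statement. As you yourself observe, the balance clause is vacuous (both matchings have $k$ edges), so the stated criterion reduces to ``$\Delta$ is a disjoint union of at most two alternating paths''. This misclassifies pairs in both directions. Two vertex-disjoint \emph{even} alternating paths --- say $\mathcal{M}_\alpha=\{r_1c_1,\,r_3c_2\}$ and $\mathcal{M}_\beta=\{r_2c_1,\,r_4c_2\}$ in $M_{4,2,2}$ --- satisfy the stated condition, yet your own step 2 applied to a single one of the two paths produces the witnesses $\{r_1c_1,r_4c_2\}$ and $\{r_2c_1,r_3c_2\}$, so this pair is a non-edge. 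Conversely, a single alternating cycle (two permutation matrices differing by one cycle, i.e.\ the Birkhoff case $m=n=k$) is an edge but is not a vertex-disjoint union of paths. The statement you can actually prove, which is the content of \cite{GL09}, is: $\alpha,\beta$ is an edge if and only if $\Delta$ is connected (one alternating path or one alternating cycle), or consists of exactly two alternating paths of which one has one more $\mathcal{M}_\alpha$-edge and the other one more $\mathcal{M}_\beta$-edge. A complete submission must prove this corrected statement and point out that the transcription above is inaccurate; instead you treat the mismatch as a reconcilable boundary issue and miss the even-path counterexample entirely, which is the direction in which the stated criterion declares a non-edge to be an edge.
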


\begin{proposition}
\label{prop: k-assignment square}
The rhombus criterion is true for the k-assignment polytopes, $M_{m,n,k}$.
\end{proposition}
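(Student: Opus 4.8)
The plan is to translate the rhombus criterion into a re-partitioning problem for matchings and then exhibit an explicit alternating swap. Recall that a witness for the pair $\alpha,\beta$ is a pair of distinct vertices $\alpha',\beta'\in V\setminus\{\alpha,\beta\}$ with $\alpha+\beta=\alpha'+\beta'$; under the bijection with $k$-matchings this says precisely that the edge-multiset $\mathcal{M}_\alpha\uplus\mathcal{M}_\beta$ admits a second decomposition into two $k$-matchings. Since $\mathcal{M}_\alpha$ and $\mathcal{M}_\beta$ are matchings, each vertex of $K_{m,n}$ meets at most one edge of each, so the multigraph $\mathcal{M}_\alpha\uplus\mathcal{M}_\beta$ has maximum degree $2$. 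It therefore splits into doubled edges (the common edges $\mathcal{M}_\alpha\cap\mathcal{M}_\beta$) together with the components of the symmetric difference $(\mathcal{M}_\alpha\cup\mathcal{M}_\beta)\setminus(\mathcal{M}_\alpha\cap\mathcal{M}_\beta)$, each of which is an alternating path or an alternating cycle (necessarily of even length, as $K_{m,n}$ is bipartite).

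First I would record the only available moves. Writing the multiset as two matchings amounts to $2$-colouring its edges so that no colour repeats at a vertex; on a doubled edge the colouring is forced, while on each path or cycle component there are exactly two colourings, interchanged by \emph{flipping} that component. Flipping an even component leaves the number of edges of each colour unchanged, whereas flipping an odd path shifts it by $\pm1$. Call an odd path \emph{$\alpha$-heavy} or \emph{$\beta$-heavy} according to which matching supplies the majority of its edges. Since the doubled edges and even components are balanced and $|\mathcal{M}_\alpha|=|\mathcal{M}_\beta|=k$, the global count forces the number of $\alpha$-heavy odd paths to equal the number of $\beta$-heavy ones; call this number $t$, and let $E$ be the number of even components. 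A flip on a set of components keeps both classes of size $k$ exactly when it flips equally many $\alpha$-heavy and $\beta$-heavy odd paths; a short count shows the number of size-preserving colourings is $2^{E}\binom{2t}{t}$, and dividing by the free global swap, the pair $\alpha,\beta$ is an edge precisely when $2^{E}\binom{2t}{t}=2$, i.e. when $(E,t)\in\{(1,0),(0,1)\}$ (a single even path or single cycle, or one balanced pair of odd paths). This is the point at which the edge theorem above enters to confirm the boundary cases.

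Next I would produce a witness whenever $\alpha,\beta$ is a non-edge, i.e. whenever $2^{E}\binom{2t}{t}>2$, so that either $E\ge1$ with at least one further component present, or $E=0$ and $t\ge2$. In the first case I flip a single even component: this is size-neutral, so $\mathcal{M}_{\alpha'}$ and $\mathcal{M}_{\beta'}$ are again $k$-matchings, and since some other component is left unflipped the resulting unordered pair differs both from $\{\alpha,\beta\}$ and from its global swap. In the second case I flip one $\alpha$-heavy and one $\beta$-heavy odd path together; the two $\pm1$ shifts cancel so both classes stay of size $k$, and as $2t\ge4$ there remain unflipped odd paths, so the new pair is again genuinely different. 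In either case $\alpha',\beta'\in V\setminus\{\alpha,\beta\}$ and $\alpha+\beta=\alpha'+\beta'$, which are the desired witnesses.

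The main obstacle is exactly the size bookkeeping at the boundary: a naive ``swap one component'' would break the constraint that each vertex carries exactly $k$ ones, which is what distinguishes $M_{m,n,k}$ from the easy product-of-simplices situation. Controlling this is what forces the even/odd dichotomy and the balance $\#\{\alpha\text{-heavy}\}=\#\{\beta\text{-heavy}\}$, and it is this same bookkeeping that correctly isolates the true edges (single even path, single cycle, or one balanced pair of odd paths), where the only size-preserving re-colouring is the global swap and hence no witness can exist. Thus the crux is to verify that failure of the edge condition always leaves room for a proper, nonempty, size-preserving flip, which the count $2^{E}\binom{2t}{t}>2$ guarantees.
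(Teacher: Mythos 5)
Your proposal is correct and takes essentially the same route as the paper's proof: decompose the symmetric difference of the two $k$-matchings into vertex-disjoint alternating paths and cycles, use the balance argument (equal numbers of $\alpha$-heavy and $\beta$-heavy odd paths, since both matchings have $k$ edges), obtain witnesses by flipping either one balanced component or a complementary pair of odd paths while leaving some component untouched, and fall back on the stated edge characterization of $M_{m,n,k}$ for the cases where no witnesses exist. The only difference is organizational: you first enumerate all size-preserving decompositions via the count $2^{E}\binom{2t}{t}$ and then flip, whereas the paper assumes a non-edge, performs the same swap, and rules out $\alpha'=\beta$ by deriving a contradiction with the edge characterization — the underlying construction and its verification (size preservation, the matching property, and distinctness from $\{\alpha,\beta\}$) are identical.
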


\begin{proof}
Let $\alpha$, $\beta$ be a non-edge of $M_{m,n,k}$ and let $\mathcal{M}_\alpha$ and $\mathcal{M}_\beta$ be the corresponding matches. 

We note that any vertex has at most one edge from $\mathcal{M}_\alpha$ and one from $\mathcal{M}_\beta$.
It follows that every vertex in $(\mathcal{M}_\alpha\cup \mathcal{M}_\beta)\setminus(\mathcal{M}_\alpha\cap \mathcal{M}_\beta)$ has degree at most 2, and thus $(\mathcal{M}_\alpha\cup \mathcal{M}_\beta)\setminus(\mathcal{M}_\alpha\cap \mathcal{M}_\beta)$ is a union of (vertex) disjoint alternating paths and cycles. 
Moreover we note that $\mathcal{M}_\alpha\setminus \mathcal{M}_\beta=\mathcal{M}_\alpha\setminus(\mathcal{M}_\alpha\cap \mathcal{M}_\beta)$ is equinumerous to $\mathcal{M}_\beta\setminus\mathcal{M}_\alpha=\mathcal{M}_\beta\setminus(\mathcal{M}_\alpha\cap \mathcal{M}_\beta)$ as both $\mathcal{M}_\alpha$ and $\mathcal{M}_\beta$ contains exactly $k$ edges. 

As every path in $(\mathcal{M}_\alpha\cup \mathcal{M}_\beta)\setminus(\mathcal{M}_\alpha\cap \mathcal{M}_\beta)$ is alternating, we must have that $|P\cap (\mathcal{M}_\alpha\setminus\mathcal{M}_\beta)|$ and $|P\cap (\mathcal{M}_\beta\setminus \mathcal{M}_\alpha)|$ differ with at most one. 
Then if we sum over all inclusion maximal paths (and cycles) in $(\mathcal{M}_\alpha\cup \mathcal{M}_\beta)\setminus(\mathcal{M}_\alpha\cap \mathcal{M}_\beta)$  we have 
\begin{align*}
\sum_P |P\cap (\mathcal{M}_\alpha\setminus\mathcal{M}_\beta)| &- |P\cap (\mathcal{M}_\beta\setminus \mathcal{M}_\alpha)|\\
&=\sum_P |P\cap (\mathcal{M}_\alpha\setminus\mathcal{M}_\beta)| - \sum_P |P\cap (\mathcal{M}_\beta\setminus \mathcal{M}_\alpha)|\\
&= |\mathcal{M}_\alpha\setminus\mathcal{M}_\beta| - |\mathcal{M}_\beta\setminus\mathcal{M}_\alpha|=0
\end{align*}
$(\mathcal{M}_\alpha\cup \mathcal{M}_\beta)\setminus(\mathcal{M}_\alpha\cap \mathcal{M}_\beta)$ is a union of (vertex) disjoint alternating paths and cycles. 
However, as the summands in the first sum only assumes the values $1$, $0$, and $-1$, there is either a path $P_1$ such that $|P_1\cap (\mathcal{M}_\alpha\setminus\mathcal{M}_\beta)| = |P_1\cap (\mathcal{M}_\beta\setminus \mathcal{M}_\alpha)|$, then let $P_2=\emptyset$, or two paths, $P_1$ and $P_2$ such that $P_1\cup P_2$ has an equal amount of edges from $\mathcal{M}_\alpha$ and $\mathcal{M}_\beta$. 

Then define $\mathcal{M}_{\alpha'}$ from $\mathcal{M}_\alpha$ via exchanging all edges $(P_1\cup P_2)\cap \mathcal{M}_\alpha$ with $(P_1\cup P_2)\cap \mathcal{M}_\beta$, and similarly with  $\mathcal{M}_{\beta'}$ from $\mathcal{M}_\beta$. 
Let $\alpha'$ and $\beta'$ denote the vectors corresponding to $\mathcal{M}_{\alpha'}$ and $\mathcal{M}_{\beta'}$. 
We need to show is that $\alpha'$ and $\beta'$ are vertices of $M_{m,n,k}$, or equivalently that $\mathcal{M}_{\alpha'}$ and $\mathcal{M}_{\beta'}$ are $k$-matchings, that $\alpha+\beta=\alpha'+\beta'$, and that $\alpha'\neq \beta$.
That $\alpha+\beta=\alpha'+\beta'$ is direct from construction. 
If $\alpha'=\beta$, then $\alpha$, $\beta$ is an edge of $M_{m,n,k}$, which is not true by assumption. 
Hence what is left is that $\mathcal{M}_{\alpha'}$ and $\mathcal{M}_{\beta'}$ are $k$-matchings. 

First we see that $\mathcal{M}_{\alpha'}$ contains exactly $k$ edges as we removed equally many edges as we added in. 
For the sake of contradiction, assume $\mathcal{M}_{\alpha'}$ is not a matching. 
Since $P_1$ and $P_2$ were alternating, this mean we added in an edge $ij$ in $P_1$ or $P_2$ such that there is an edge $i'j$ in $\mathcal{M}_{\alpha'}\setminus (P_1\cup P_2)=\mathcal{M}_{\alpha}\setminus (P_1\cup P_2)$. 
As $P_1$ and $P_2$ were inclusion maximal in $(\mathcal{M}_\alpha\cup \mathcal{M}_\beta)\setminus(\mathcal{M}_\alpha\cap \mathcal{M}_\beta)$ and by definition $\mathcal{M}_{\alpha'}\subseteq (\mathcal{M}_\alpha\cup \mathcal{M}_\beta)\setminus(\mathcal{M}_\alpha\cap \mathcal{M}_\beta)$ we must have $i'j\in \mathcal{M}_\alpha\cap \mathcal{M}_\beta$.
This however contradicts that both $\mathcal{M}_\alpha$ and $\mathcal{M}_\beta$ were matchings. 
Thus the result follows. 
\end{proof}

Another well studied family of polytopes are the \emph{stable set polytopes}.
Let $G$ be a graph and $A\subseteq [n]$ a subset of all nodes. 
Then $A$ is \emph{stable} if $G|_A$ does not contain edges. 
For any subset $A\subseteq [n]$ the \emph{incidence vector} of $A$ is defined as
\[
\chi_A(i)=\begin{cases}
1&\text{ if $i\in A$, and }\\
0&\text{ otherwise.}\\
\end{cases}
\]
Then we define the stable set polytope of $G$, as the convex hull of all incidence vectors of stable sets, that is
\[
\STAB(G)\coloneqq \conv\left(\chi_A\colon A \text{ is stable in }G\right). 
\]
Due to Chv\'atal we have a characterization of the edges of $\STAB(G)$.
\begin{theorem}
\cite[Theorem 6.2]{C75}
\label{thm: chvatal characterization}
Let $G = ([n],E)$ be an undirected graph, let $A$ and $B$ stable sets and $\chi_A,\chi_B\in\RR^n$ be two vertices of $\STAB(G)$.  
Then $\chi_A$, $\chi_B$ is an edge of $\STAB(G)$ if and only if the subgraph of $G$ induced by $A\setminus B\cup B\setminus A$ is connected. 
\end{theorem}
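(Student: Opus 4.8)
The plan is to prove both implications, using the rhombus criterion (\cref{lem: square criterion}) for the ``only if'' direction and an explicit optimizing cost function for the ``if'' direction. Throughout write $D\coloneqq (A\setminus B)\cup(B\setminus A)$, $P\coloneqq A\setminus B$, $Q\coloneqq B\setminus A$, and $C\coloneqq A\cap B$. Two structural observations drive everything. First, since $P\subseteq A$ and $Q\subseteq B$ are stable, $G[D]$ has no edge inside $P$ and none inside $Q$; hence $G[D]$ is bipartite with parts $P$ and $Q$. Second, there is no edge between $C$ and $D$: an edge from $c\in C\subseteq A$ into $P\subseteq A$ would violate stability of $A$, and symmetrically on the $B$ side. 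So the relevant adjacency splits cleanly into the block $C$, the bipartite block $D$, and the remaining vertices $R\coloneqq[n]\setminus(A\cup B)$.

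For the direction ``edge $\Rightarrow$ connected'' I would argue the contrapositive. If $G[D]$ is disconnected, pick a nonempty union of connected components $D_1\subsetneq D$ and set $A'\coloneqq(A\setminus D_1)\cup(B\cap D_1)$ and $B'\coloneqq(B\setminus D_1)\cup(A\cap D_1)$, i.e. swap $A$ and $B$ along $D_1$. Since every vertex of $D_1$ lies in exactly one of $A,B$, one checks at once that $\chi_A+\chi_B=\chi_{A'}+\chi_{B'}$, and that $A'\neq A,B$ and $B'\neq A,B$ because both $D_1$ and $D\setminus D_1$ are nonempty. Stability of $A'$ is the only point needing care: a putative edge $uv$ of $A'$ with $u\in A\setminus D_1$, $v\in B\cap D_1$ forces $u\in C$ (otherwise $u$ and $v$ would lie in the distinct components $D_1$ and $D\setminus D_1$ with an edge between them), but then $u,v\in B$ contradicts stability of $B$; the remaining cases put both endpoints inside $A$ or inside $B$. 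Thus $A',B'$ are witnesses and $\chi_A,\chi_B$ is a non-edge by \cref{lem: square criterion}.

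For ``connected $\Rightarrow$ edge'' I would produce a cost vector $c\in\RR^n$ whose maximizers over $\STAB(G)$ are exactly $\chi_A$ and $\chi_B$; the maximal face is then the one-dimensional $\conv(\chi_A,\chi_B)$, an edge. Assuming $|D|\ge 2$ (the case $|D|=1$ means $A,B$ differ in a single coordinate and is immediate), connectivity forces $G[D]$ to have no isolated vertex, so I set
\[
c(v)=\begin{cases} N & v\in C,\\ \deg_{G[D]}(v) & v\in D,\\ -1 & v\in R,\end{cases}
\]
with $N$ large. Because $R$ carries negative weight, no maximizer meets $R$; because $C$ carries weight $N$ and has no edge to $D$, every maximizer contains all of $C$; and the three blocks contribute independently, so a maximizer is $C$ together with a $\deg$-maximal stable subset $S\subseteq D$. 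Now $\sum_{v\in S}\deg_{G[D]}(v)$ counts the edges of $G[D]$ incident to $S$, which is at most $|E(G[D])|$ with equality exactly when $S$ is a vertex cover. Hence the $\deg$-maximal stable subsets of $D$ are precisely the stable vertex covers of $G[D]$, and $P,Q$ are two of them, each of weight $|E(G[D])|$.

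The main obstacle is the final uniqueness claim: in a connected bipartite graph with parts $P,Q$ and no isolated vertex, the only stable sets that are vertex covers are $P$ and $Q$. I would prove this by propagation: if $S$ is such a set and $p\in S\cap P$, then every neighbour of $p$ is outside $S$ (stability) and every neighbour of such a neighbour is back in $S$ (vertex cover), so connectivity pushes $S$ to contain all of $P$ and none of $Q$, giving $S=P$; starting from $Q$ gives $S=Q$. Consequently the $\deg$-maximal stable subsets of $D$ are exactly $P=A\cap D$ and $Q=B\cap D$, the overall maximizers of $c$ are exactly $\chi_A=\chi_{C}+\chi_P$ and $\chi_B=\chi_{C}+\chi_Q$, and $\conv(\chi_A,\chi_B)$ is an edge. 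Together with the previous paragraph this establishes the characterization.
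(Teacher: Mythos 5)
Your proof is correct, but there is nothing in the paper to compare it against: the paper does not prove this theorem at all, it imports it from Chv\'atal \cite[Theorem 6.2]{C75} and uses it as a black box. The closest thing to a proof in the paper is the argument for the proposition immediately following, which shows that $\STAB(G)$ fulfills the rhombus criterion, and your ``edge $\Rightarrow$ connected'' half is essentially that same argument: the paper also swaps $A$ and $B$ along a connected component of the induced graph on $(A\setminus B)\cup(B\setminus A)$, runs the identical three-case stability check, and concludes via \cref{lem: square criterion}; the only cosmetic difference is that the paper swaps along a single inclusion-maximal component while you allow an arbitrary nonempty union of components. The genuinely new content in your write-up is the ``connected $\Rightarrow$ edge'' half, which the paper outsources to Chv\'atal: the degree-weighted cost function, the observation that for a stable $S\subseteq D$ the quantity $\sum_{v\in S}\deg_{G[D]}(v)$ counts the edges covered by $S$ (no double-counting precisely because $S$ is stable), so that deg-maximal stable sets are stable vertex covers, and the propagation argument showing a connected bipartite graph with no isolated vertex has exactly two stable vertex covers, namely its two sides. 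This makes the characterization self-contained, and it buys a cleaner logical structure: the paper's proposition (every non-edge of $\STAB(G)$ has witnesses) becomes a corollary of your two halves, rather than a statement whose ``non-edge $\Rightarrow$ disconnected'' input must be cited. Two small points you should make explicit: any $N>0$ already suffices, since $C=A\cap B$ has no edges into $D$ or within itself, so enlarging a maximizer by all of $C$ is free; and the ``immediate'' base case $|D|=1$ deserves one sentence, e.g.\ the cost function with weight $1$ on $C$, weight $0$ on the unique vertex of $D$, and weight $-1$ on $R$ is maximized exactly at $\chi_A$ and $\chi_B$.
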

We can again rephrase this in terms of the rhombus criterion. 
\begin{proposition}
For any undirected graph $G = ([p],E)$ the polytope $\STAB(G)$ fulfills the rhombus criterion.  
\end{proposition}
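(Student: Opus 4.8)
The plan is to use Chv\'atal's characterization, \cref{thm: chvatal characterization}, to convert the non-edge hypothesis into a disconnectedness statement about the symmetric difference, and then to manufacture witnesses by recombining the two stable sets componentwise. Let $\chi_A$, $\chi_B$ be a non-edge of $\STAB(G)$, write $C\coloneqq A\cap B$, and let $D\coloneqq (A\setminus B)\cup(B\setminus A)$ be the symmetric difference. By \cref{thm: chvatal characterization} the induced subgraph $G|_D$ is disconnected.

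The first observation I would record is that, since $A=C\cup(A\setminus B)$ and $B=C\cup(B\setminus A)$ are both stable, there can be no edge between $C$ and any vertex of $A\setminus B$, nor between $C$ and any vertex of $B\setminus A$; hence there is no edge between $C$ and $D$ at all. Consequently, for any $X\subseteq D$ the set $C\cup X$ is stable if and only if $X$ is stable in $G|_D$. This is what lets me build new stable sets freely out of pieces of $D$.

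Next I would exploit the disconnection. Let $K_1,\dots,K_m$ with $m\ge 2$ be the connected components of $G|_D$. Every vertex of $D$ lies in exactly one of $A$ or $B$, so each $K_i$ is partitioned into its $A$-part $K_i\cap A$ and its $B$-part $K_i\cap B$, both stable as subsets of the stable sets $A$ and $B$. I would then fix a non-empty proper subset $S\subsetneq\{K_1,\dots,K_m\}$, which exists because $m\ge2$, and let $X$ consist of the $B$-parts of the components in $S$ together with the $A$-parts of the components not in $S$, and set $Y\coloneqq D\setminus X$. Because distinct components carry no edges between them, both $X$ and $Y$ are stable in $G|_D$, so by the observation above $A'\coloneqq C\cup X$ and $B'\coloneqq C\cup Y$ are stable sets, i.e. vertices of $\STAB(G)$.

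It then remains to check two bookkeeping points. The identity $\chi_A+\chi_B=\chi_{A'}+\chi_{B'}$ holds because both sides take the value $2$ on $C$, the value $1$ on $D$ (which $X$ and $Y$ partition), and $0$ elsewhere. For distinctness, swapping a non-empty component strictly changes the chosen part, since the $A$-part and $B$-part of a non-empty component are disjoint and hence unequal, so $X\neq A\setminus B$; and because $S$ is proper there is a component outside $S$ on which $X$ keeps the $A$-part, giving $X\neq B\setminus A$. These two inequalities force $A',B'\notin\{A,B\}$, so $\chi_{A'},\chi_{B'}$ are genuine witnesses. The only real content is the first observation, that $C$ has no edges into $D$, which is exactly what makes componentwise recombination produce stable sets; once that is in hand the rest is routine.
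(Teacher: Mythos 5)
Your proof is correct and takes essentially the same approach as the paper: both invoke Chv\'atal's characterization (\cref{thm: chvatal characterization}) and construct witnesses by swapping connected components of the symmetric difference $A\setminus B\cup B\setminus A$ between the two stable sets, your construction with $S$ a singleton being exactly the paper's swap of a single inclusion-maximal component. The only cosmetic difference is that the paper verifies stability of the recombined sets by a direct case analysis on a hypothetical edge, whereas you first isolate the observation that $A\cap B$ has no edges into the symmetric difference; the underlying argument is the same.
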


\begin{proof}
Let $\chi_A$, $\chi_B$ be a non-edge of $\STAB(G)$, that is subgraph of $G$ induced by $A\setminus B\cup B\setminus A$ is not connected. 
Then consider a inclusion maximal connected component, $C$, of $A\setminus B\cup B\setminus A$. 
Define $A' = A\setminus C\cup \left(B\cap C\right)$ and $B' = B\setminus C\cup \left(A\cap C\right)$. 
It follows from definition that $\chi_A+ \chi_B=\chi_{A'}+\chi_{B'}$ and $A'$ is not equal to either $A$ nor $B$.
Thus what is left to show is that $A'$ and $B'$ are stable sets. 
Therefore assume that $A'$ is not a stable set, that is there are vertices $i, j\in A'$ such that $ij\in G$. 
As $A' = A\setminus C\cup \left(B\cap C\right)$ we have tree cases, $i,j\in A\setminus C$, $i,j\in B\cap C$, and $i\in A\setminus C$ and $j\in B\cap C$. 
The first two cases cannot be true as both $A$ and $B$ were stable, leaving us with $i\in A\setminus C$ and $j\in B\cap C$. 
Again since $B$ was stable we must have $i\in A\setminus \left(C\cup B\right)$ and thus $i\in A\setminus B$. 
However, as $C$ was  an inclusion maximal connected component, and $ij\in A\setminus B\cup B\setminus A$ we must have $ij\in C$, a contradiction. 
\end{proof}

\section{Efficient Computations of the Skeleta}
\label{sec: computations}
Checking whether $\alpha$, $\beta$ is an edge or not can be hard as it is effectively a linear optimization problem (see \cref{prop: proj of edge}), which are known to be hard \cite{DS18}. 
In practice however, it done rather efficient due to highly optimized solvers \cite{H17}.
In general to show that $\alpha$, $\beta$ is a non-edge we need vertices $\{\gamma_i\}_{i=1}^d\subseteq V\setminus\{\alpha, \beta\}$, where $d$ is the dimension of $P$, such that
\begin{equation}
\label{eq: non-edge}
t\alpha + (1-t)\beta =\sum_i x_i\gamma_i
\end{equation}
for some non-negative real numbers $t$ and $\{x_i\}$.

For several of the polytopes discussed in this paper a direct computation of the edges is not feasable.
This is however mainly due to the dimension that can, in the worst case, grow exponentially, for example $\dim \CIMT_n = 2^n-n-2$. 
Many methods rely on finding the facets of the polytope, and from this deduce the edge structure. 
This does however, in theory, give access to all faces; we compute significantly more information than is necessary. 
Therefore algorithms for doing this are known to have a worst case of $v^{\left\lfloor \frac{d}2\right\rfloor}$, where $v$ is the number of vertices and $d$ is the dimension of the polytope \cite{Zie95}. 
The list of edges are however significantly smaller, at most $\binom{v}{2}$. 
In this section we will discuss how we can utilize the rhombus criterion to compute the edge-list of some 0/1-polytopes. 
This allows us to compute the full edge-list of several additional polytopes, for example $\CIM_5$ and $\CIMT_6$. 
This despite $\CIMT_6$ not fulfilling the rhombus criterion (see \cref{ex: cimt6 not square}).  
Let us first state the goal. 

\begin{goal}
Let $P$ be a $0/1$-polytope with vertices $V$. 
Construct an algorithm to determine all pairs $\alpha,\ \beta\in V$ that are edges of $P$. 
\end{goal}
Implicit in this goal is the algorithm needs to be computationally feasible, even for larger dimensions. 
For example, $\CIM_5$ has $8782$ vertices, something a standard software like \texttt{cdd} \cite{cddlib} or \texttt{ppl} \cite{ppl06} usually can handle, but is 26-dimensional, and it will therefore take a significant time to find the edge structure. 
Our main goal is to obtain a better scaling in $d$, the dimension. 

To this end we will assume we have a list $\mathcal{L}$ of unordered pairs of vertices where an entry $(\{\alpha, \beta\},  e_{\alpha, \beta})\in\mathcal{L}$ stores whether we know that $\conv(\alpha, \beta)$ is not an edge of $P$, unsure whether $\conv(\alpha, \beta)$ is an edge of $P$, or are sure that $\conv(\alpha, \beta)$ is an edge of $P$, with $e_{\alpha, \beta}=-1$, $e_{\alpha, \beta}=0$, or $e_{\alpha, \beta}=1$, respectively. 
Notice that we can already have issues due to the size of $\mathcal{L}$. 
Just creating this list is in and by itself a non-trivial task. 
If such a list cannot be saved the following methods are unfortunately not applicable. 
Regardless, from this perspective all we need to do is determine $e_{\alpha, \beta}$ for all entries  in $\mathcal{L}$. 

\subsection{Efficiency and Optimizations}
\label{subsec: efficiency}
Checking every entry in $\mathcal{L}$ is a daunting task. 
This especially as computationally verifying whether a pair of vertices is an edge or not is non-trivial in higher dimensions.
Therefore we require some way of reducing the dimension. 

Let $P$ be a $0/1$-polytope with vertices $V$, and let $\alpha,\ \beta\in V$.
Then we define 
\[
\mathcal{F}_{\alpha,\beta}\coloneqq \left\{v\in V\colon \alpha_i=\beta_i\Rightarrow v_i=\alpha_i=\beta_i \right\}.
\]
That is, $\mathcal{F}_{\alpha,\beta}$ is the set of all vertices that coordinate-wise agree with $\alpha$ and $\beta$, whenever they agree. 
\begin{theorem}
Let $P$ be a $0/1$-polytope with vertices $V$, and let $\alpha,\ \beta\in V$.
Then $F_{\alpha, \beta}\coloneqq \conv\left(v\in\mathcal{F}_{\alpha, \beta}\right)$ is a face of $P$. 
\end{theorem}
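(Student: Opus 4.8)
The plan is to exhibit a single linear cost function $c$ whose maximizers over $P$ are exactly the vertices lying in $\mathcal{F}_{\alpha,\beta}$. Since the set of maximizers of a linear functional over a polytope is a face, and a face of a polytope is the convex hull of the vertices it contains, this will identify $F_{\alpha,\beta}$ as precisely that face. This is the same cost-function strategy already used in the proof of \cref{lem: compatible face}.

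First I would isolate the \emph{agreement set} $A \coloneqq \{i : \alpha_i = \beta_i\}$ and, exploiting that $P$ is a $0/1$-polytope, define $c$ coordinatewise by $c_i = 1$ if $i \in A$ and $\alpha_i = \beta_i = 1$, by $c_i = -1$ if $i \in A$ and $\alpha_i = \beta_i = 0$, and $c_i = 0$ otherwise (in particular on every coordinate where $\alpha$ and $\beta$ disagree). Set $M \coloneqq |\{i \in A : \alpha_i = 1\}|$. For any $0/1$-vector $v$ one then has
\[
\langle c, v\rangle = \sum_{\substack{i \in A \\ \alpha_i = 1}} v_i \;-\; \sum_{\substack{i \in A \\ \alpha_i = 0}} v_i \;\leq\; M,
\]
since each $v_i \in \{0,1\}$. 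The next step is to analyze the equality case: $\langle c, v\rangle = M$ forces $v_i = 1$ for every $i \in A$ with $\alpha_i = 1$ and $v_i = 0$ for every $i \in A$ with $\alpha_i = 0$, that is, $v_i = \alpha_i = \beta_i$ for all $i \in A$, which is exactly the defining condition of $\mathcal{F}_{\alpha,\beta}$. Conversely, every $v \in \mathcal{F}_{\alpha,\beta}$ attains the value $M$.

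Finally, since $\alpha, \beta \in \mathcal{F}_{\alpha,\beta}$, the value $M$ is genuinely attained by a vertex of $P$, so $\max_{x\in P}\langle c, x\rangle = M$ and the induced face $\{x \in P : \langle c, x\rangle = M\}$ is nonempty. Its vertices are exactly the vertices of $P$ on which $c$ is maximized, namely the elements of $\mathcal{F}_{\alpha,\beta}$; as a face of a polytope is the convex hull of its vertices, this face equals $\conv(\mathcal{F}_{\alpha,\beta}) = F_{\alpha,\beta}$, which proves the claim. There is no serious obstacle here: the construction of $c$ is the whole content, and the only point requiring care is that the coordinatewise optimum of $c$ is realized inside $V$ (guaranteed by $\alpha \in V$), so that the upper bound $M$ is the true maximum over $P$ rather than an unattained bound.
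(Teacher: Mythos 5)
Your proposal is correct and follows essentially the same route as the paper: both define the identical cost function ($c_i = 1$ where $\alpha_i=\beta_i=1$, $c_i=-1$ where $\alpha_i=\beta_i=0$, and $c_i=0$ elsewhere) and observe that its maximizers over all $0/1$-vectors, hence over $V$, are exactly the elements of $\mathcal{F}_{\alpha,\beta}$. Your write-up merely makes the equality analysis (via the bound $M$) more explicit than the paper does.
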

\begin{proof}
It is enough to find a cost function $c$ such that $\mathcal{F}_{\alpha, \beta} = \argmax_{v\in V}\{c^Tv\}$, that is, a vertex $v$ maximizes $c^Tv$ if and only if $v\in \mathcal{F}_{\alpha, \beta}$.
Therefore define 
\[
c_i\coloneqq\begin{cases}
1 &\text{if } \alpha_i=\beta_i=1\\
-1 &\text{if } \alpha_i=\beta_i=0\\
0&\text{otherwise.}
\end{cases}
\]
Then to maximize $c$ over all $0/1$ vectors $u$ we need that $u_i=1$ whenever $\alpha_i=\beta_i=1$, $u_i=0$ whenever $\alpha_i=\beta_i=0$, and all other coordinates does not matter. 
As this is true for all $0/1$ vectors, it is especially true for all vectors in $V$. 
Then the result follows by definition of $\mathcal{F}_{\alpha, \beta}$. 
\end{proof}

In fact, if $P$ is the $d$-cube, then $F_{\alpha,\beta}$ is the smallest face containing both $\alpha$ and $\beta$. 
Therefore, for general $0/1$ polytopes we must consider the whole of $F_{\alpha,\beta}$ when determining if $\alpha, \beta$ is an edge of $P$. 
However, if the dimension of $F_{\alpha,\beta}$ is small enough, then we can apply any standard algorithm to determine the edge-structure of $F_{\alpha,\beta}$. 

This immediately gives us a first idea on how to tackle the problem at hand, for every $(\alpha, \beta, 0)\in \mathcal{L}$ we consider $F_{\alpha, \beta}$, compute the edge structure and update $\mathcal{L}$ accordingly. 
However, if $\alpha+\beta= \mathbf{1}$, that is $\alpha$ and $\beta$ does not agree on any coordinate, then $\mathcal{F}_{\alpha, \beta} = V$ and thus $F_{\alpha, \beta} = P$. 
Therefore, this idea is only feasible if we can rule out extreme cases similar to this. 
To this end however we have the rhombus criterion that can be efficiently checked in $\mathcal{L}$. 

In theory this is straightforward, consider every pair of entries $\{\alpha, \beta\}\times e_{\alpha, \beta},\ \{\alpha', \beta'\}\times e_{\alpha', \beta'}\in\mathcal{L}$.
If $\alpha+\beta=\alpha'+\beta'$ then \cref{lem: square criterion} tells us that neither $\alpha$, $\beta$ nor $\alpha'$, $\beta'$ is an edge of $P$, and hece $e_{\alpha,\beta}=e_{\alpha',\beta'}=-1$. 
A naive implementation of this will however take approximately $d\binom{v}{2}^2\in O(dv^4)$ time, but it can be done more efficiently. 

When creating $\mathcal{L}$ we can add the additional element $\gamma = \alpha +\beta$ to each entry. 
Then after sorting $\mathcal{L}$ after $\gamma$ (using any order) gives us a list where $\alpha$, $\beta$ fulfills the rhombus criterion if and only if either the entry above or below $(\{\alpha, \beta\}, e_{\alpha, \beta}, \gamma)$ are witnesses. 
Thus this can be done in $d\binom{v}{2}\log\binom{v}{2} +d\binom{v}{2}\in O\left(dv^2\log v\right)$ time. 
In practice this is very efficient due to highly optimized implementations, for example in the \texttt{python} library \texttt{pandas} \cite{pandas1, pandas2}.
See \cref{fig: computation time}  for computational time. 

If we have a theoretical guarantee that $P$ fulfills the rhombus criterion we can then draw the conclusion that for every other pair we have $e_{\alpha, \beta}=1$. 
This might however not be the case, especially when exploring whether a class might fulfill the rhombus criterion, and therefore we will need a way to verify whether a pair of vertices is an edge of $P$.

\subsection{Edge verification functions}
\label{subsec: edge verification}
Here we will present two general ways of verifying whether a pair of vertices is an edge.  
This since one will scale better with dimension, but will have several deficits. 
The other being more reliable, but scales worse with dimension. 
As the main point of this method being a better scaling with dimension, we believe both being of significance. 
Similar to before it is enough to consider $F_{\alpha, \beta}$, as opposed to $P$.

To verify that $\alpha$, $\beta$ is an edge of $P$ it is enough to find a cost function $c$ that maximizes exactly in $\alpha$ and $\beta$ out of all vertices in $V$. 
To do this we can take any cost function $c$ such that $\langle c,\alpha\rangle = \langle c, \beta\rangle$. 
If there is a vertex $\mu\in V$ sucht that $\langle c,\mu\rangle > \langle c,\alpha\rangle$ we update $c$ such that $\langle c,\mu\rangle < \langle c,\alpha\rangle= \langle c, \beta\rangle$. 
If such a vertex does not exist, we are done. 
Left to do is how to update $c$. 

\begin{lemma}
\label{lem: cost function update}
Let $P$ be a polytope with vertex set $V$ and $\alpha,\ \beta\in V$. 
Assume $c$ is a cost function such that for some $\mu\in V$ we have $\langle c, \mu \rangle > \langle c, \alpha\rangle=\langle c, \beta\rangle$. 
Let $c_\mu$ be a face-defining cost-function for $\{\mu\}$.
Define $\delta = \alpha-\beta$ and $p = \left(c_\mu-\frac{\langle\delta, c_\mu\rangle}{\langle \delta, \delta\rangle}\delta\right)$. 
For some $\varepsilon$ with $|\varepsilon|$ small enough, we have
\[
c' = c-\left( \frac{\langle c, \alpha-\mu\rangle}{ \left\langle p, \alpha-\mu\right\rangle}+\varepsilon \right)p 
\]
we have $c'^T\mu < c'^T\alpha=c'^T\beta$.  
Moreover, the sign of $\varepsilon$ is the same as $\left\langle p, \alpha-\mu\right\rangle$.
\end{lemma}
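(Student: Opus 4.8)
The plan is to treat the space of admissible cost functions---those assigning equal value to $\alpha$ and $\beta$---as the hyperplane $\delta^{\perp}$, and to realize the update as a one-parameter move inside it. First I would record the defining property of $p$: by construction $p=c_\mu-\frac{\langle\delta,c_\mu\rangle}{\langle\delta,\delta\rangle}\delta$ is the orthogonal projection of $c_\mu$ onto $\delta^{\perp}$, so $\langle p,\delta\rangle=0$, which is the same as $\langle p,\alpha\rangle=\langle p,\beta\rangle$. Since $c$ itself satisfies $\langle c,\alpha\rangle=\langle c,\beta\rangle$, i.e. $\langle c,\delta\rangle=0$, subtracting \emph{any} scalar multiple of $p$ from $c$ preserves this equality; hence $\langle c',\alpha\rangle=\langle c',\beta\rangle$ holds for every $\varepsilon$, disposing of the second equality in the conclusion immediately.

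Next I would verify the strict inequality by a direct computation of $\langle c',\alpha-\mu\rangle$. Writing $A=\langle c,\alpha-\mu\rangle$ and $B=\langle p,\alpha-\mu\rangle$, the coefficient $\frac{A}{B}$ is engineered precisely so that the contribution of $c$ cancels:
\[
\langle c',\alpha-\mu\rangle=A-\Bigl(\tfrac{A}{B}+\varepsilon\Bigr)B=-\varepsilon B .
\]
Thus $\langle c',\alpha\rangle-\langle c',\mu\rangle=-\varepsilon\,\langle p,\alpha-\mu\rangle$, and one picks the sign of $\varepsilon$ so that this quantity is positive, giving $\langle c',\mu\rangle<\langle c',\alpha\rangle=\langle c',\beta\rangle$. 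Notice that, by this exact cancellation, the three-point statement actually holds for $\varepsilon$ of arbitrary magnitude of the correct sign, not merely small ones.

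The step I expect to be the main obstacle is the \emph{well-definedness} of the quotient, i.e. showing the denominator $B=\langle p,\alpha-\mu\rangle$ is nonzero. Here I would rewrite $B=\langle c_\mu,\,w\rangle$, where $w=(\alpha-\mu)-\frac{\langle\delta,\alpha-\mu\rangle}{\langle\delta,\delta\rangle}\delta$ is the projection of $\alpha-\mu$ onto $\delta^{\perp}$, and argue first that $w\neq 0$: otherwise $\alpha-\mu$ would be a scalar multiple of $\delta=\alpha-\beta$, forcing $\mu$ onto the affine line through the distinct vertices $\alpha,\beta$ and hence expressing one of $\alpha,\beta,\mu$ as a convex combination of the other two, contradicting that they are vertices. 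Since $w\neq 0$ is fixed and the face-defining cost functions for $\{\mu\}$ form a full-dimensional open cone, the condition $\langle c_\mu,w\rangle\neq 0$ excludes only a hyperplane, so a suitable $c_\mu$ always exists (perturbing the given one within the cone if necessary).

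Finally I would explain the qualifier ``$|\varepsilon|$ small enough.'' Although the displayed conclusion concerns only $\alpha,\beta,\mu$, in the intended iterative use one also does not want the update to spoil strict inequalities already enjoyed by the remaining finitely many vertices $\nu\in V$. Since $\varepsilon\mapsto\langle c',\nu\rangle$ is continuous and $V$ is finite, a small enough $|\varepsilon|$ keeps each such $\nu$ on the correct side while the sign choice above still forces $\mu$ strictly below $\alpha$ and $\beta$. The cancellation identity, the nonvanishing of $B$, and this continuity-plus-finiteness remark together complete the argument.
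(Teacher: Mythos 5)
Your proof is correct, and it diverges from the paper's own proof at exactly the step you single out as the main obstacle --- and in your favour. Both arguments share the same skeleton: $p$ is the orthogonal projection of $c_\mu$ onto $\delta^{\perp}$, $c$ already lies in $\delta^{\perp}$, so the whole family $c-\lambda p$ keeps $\langle\cdot,\alpha\rangle=\langle\cdot,\beta\rangle$, and everything reduces to the linear function $\lambda\mapsto\langle c-\lambda p,\alpha-\mu\rangle$. At this point the paper asserts that $\langle c_\mu,\mu\rangle>\langle c_\mu,\alpha\rangle$ and $\langle c_\mu,\mu\rangle>\langle c_\mu,\beta\rangle$ already force $\langle p,\mu\rangle>\langle p,\alpha\rangle=\langle p,\beta\rangle$ (hence non-constancy of the linear function) and dismisses the rest as direct computation. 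That assertion is false in general: for the triangle with $\alpha=(0,0)$, $\beta=(0,-1)$, $\mu=(1,10)$, the vector $c_\mu=(-1,1)$ is face-defining for $\{\mu\}$ but yields $p=(-1,0)$ and $\langle p,\mu\rangle=-1<0=\langle p,\alpha\rangle$; worse, $c_\mu=(0,1)$ is also face-defining and yields $p=0$, so $\langle p,\alpha-\mu\rangle=0$ and the formula in the statement is undefined. So projection need not preserve the strict inequality, and the non-vanishing of $B=\langle p,\alpha-\mu\rangle$ genuinely requires an argument. Your treatment --- writing $B=\langle c_\mu,w\rangle$ with $w$ the projection of $\alpha-\mu$, noting $w\neq0$ because three distinct vertices cannot be collinear, and securing $\langle c_\mu,w\rangle\neq0$ by perturbing $c_\mu$ inside the open cone of face-defining cost functions --- supplies precisely this missing step, and is the rigorous counterpart of the paper's informal remark just after the lemma that one may need to ``nudge'' $c_\mu$ to avoid dividing by zero (at the modest price of reading the lemma as being about a suitably generic $c_\mu$ rather than an arbitrary one). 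Two further things your computation establishes that the paper's proof never checks: the identity $\langle c',\alpha-\mu\rangle=-\varepsilon B$ shows the admissible $\varepsilon$ must have sign \emph{opposite} to that of $\langle p,\alpha-\mu\rangle$, so the lemma's closing clause is off by a sign; and the stated conclusion holds for any $\varepsilon$ of the correct sign, the qualifier ``$|\varepsilon|$ small enough'' mattering only for the algorithmic use, where the update should not let other vertices overtake $\alpha$ and $\beta$.
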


\begin{proof}
Projecting $c_\mu$ onto the orthogonal space of $\spans(\delta)$ we get $p$. 
As $c$ already is in this space, so is $c_\lambda\coloneqq c-\lambda p$ for all $\lambda$. 
Then as $\langle c_\mu, \mu\rangle >\langle c_\mu, \alpha\rangle$ and $\langle c_\mu, \mu\rangle >\langle c_\mu, \beta\rangle$ we get $\langle p, \mu\rangle >\langle p, \alpha\rangle=\langle p, \beta\rangle$. 
This implies that $\langle c_\lambda, \mu - \alpha\rangle$ is a non-constant linear function in $\lambda$, and hence have a root. 
The rest follows by direct computations. 
\end{proof}

To obtain $c_\mu$ we can always choose $c_\mu = \mu - \frac{1}{|V|}\sum_{v\in V}v$, however for $0/1$-polytopes we can also choose $c_{\mu}=2\mu -\mathbf{1}$, which is computationally easier to deal with. 
Hence the only unknown above is $\varepsilon$. 
Computationally we need to avoid $\langle p, \alpha -\mu\rangle = 0$, as then we divide by $0$ in \cref{lem: cost function update}.
Therefore we might have to nudge $c_\mu$ a little, which can be done via adding a random vector $r$ with $\max \{ |r_i|\}$ small enough. 
If we further normalize $c_\mu$ we can choose $\max \{ |r_i|\} < \frac{1}{d}$. 
As for choosing $\varepsilon$, we can assume $c$ is normalized, and hence choosing a small constant as $\varepsilon$ is sufficient for our needs. 
This gives us \cref{alg: verify edge num} that utilizes \cref{lem: cost function update} and tries to verify that $\alpha, \beta$ is an edge via finding a cost function. 
\begin{algorithm}
  \caption{Verify Edge Numerical}
  \label{alg: verify edge num}
  \raggedright
  \hspace*{\algorithmicindent} \textbf{Input:} Two vertices $\alpha$ and $\beta$, and the vertex set $V$. Number of iterations $I$.\\
  \hspace*{\algorithmicindent} \textbf{Output:} TRUE if we can verify that $\conv(\alpha, \beta)$ is an edge of $P$ in $I$ or less iterations, FALSE otherwise.
  \begin{algorithmic}[1]
    \State $\delta \gets\alpha-\beta$
    \State $c \gets$ a random vector orthogonal to $\delta$
    \For{$i\leq I$}
        \If{There is $\mu\in V$ such that $\langle c, \mu \rangle > \langle c, \alpha\rangle$}
            \State $c\gets c'$ (as in \cref{lem: cost function update})
        \Else
            \State Remove from $V$ all vertices not maximizing $c$.
        \EndIf
        \If{$|V|\leq 3$}
            \State \Return TRUE
        \EndIf
    \EndFor
    \State \Return FALSE
  \end{algorithmic}
\end{algorithm}

Notice that \cref{alg: verify edge num} can fail to verify an edge because it simply does not find a cost function, but never verifies a pair $\alpha, \beta$ that does not give us an edge of $P$. 
While we believe that, as the number of iterations tend to infinity ($I\to\infty$), the probability of \cref{alg: verify edge num} failing to verify an edge tends to $0$, such a result would not help computationally.
This as \cref{alg: verify edge num} is only more efficient than the algorithm presented below if we find the correct cost function early on. 
As the algorithms should be applicable for non-edges as well we present the second algorithm that builds on the following propostion. 
\begin{proposition}
\label{prop: proj of edge}
Let $P$ be a polytope and $\alpha$ and $\beta$ vertices of $P$. 
Define $\pi$ be the projection of $P$ onto the orthogonal space of $\spans(\alpha-\beta)$. 
Then $\alpha, \beta$ is an edge of $P$ if and only if $\pi(\alpha)$ (or $\pi(\beta)$) is a vertex of the projection of $P$. 
\end{proposition}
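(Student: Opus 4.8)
The plan is to exploit the fact that projecting along the direction $\delta = \alpha - \beta$ collapses the segment $\conv(\alpha, \beta)$ to a single point. First I would record the elementary observation that $\pi(\alpha) = \pi(\beta)$, since $\alpha - \beta = \delta \in \spans(\delta)$ is killed by $\pi$; call this common image $q$. The whole argument then hinges on one linear-algebraic identity: for any functional $c$ lying in the orthogonal complement of $\spans(\delta)$ and any point $x$, one has $\langle c, \pi(x)\rangle = \langle c, x\rangle$, because $\pi(x)$ differs from $x$ by a vector in $\spans(\delta)$, which is orthogonal to $c$. This identity sets up a dictionary between cost functions on $P$ that are constant on the line through $\alpha$ and $\beta$ and cost functions on the projection $\pi(P)$.

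For the forward direction, I would assume $\conv(\alpha, \beta)$ is an edge of $P$, so there is a supporting cost function $c$ with $\argmax_{x \in P} \langle c, x\rangle = \conv(\alpha, \beta)$. Since $\alpha$ and $\beta$ are both maximizers we get $\langle c, \delta\rangle = 0$, hence $c$ lies in the orthogonal complement of $\spans(\delta)$. Using the identity, $c$ regarded as a functional on that complement attains its maximum over $\pi(P)$ precisely on $\pi\left(\conv(\alpha, \beta)\right) = \{q\}$, so $q$ is the unique maximizer and therefore a vertex of $\pi(P)$.

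For the converse, I would assume $q$ is a vertex of $\pi(P)$, witnessed by a functional $\bar c$ in the orthogonal complement of $\spans(\delta)$ that is uniquely maximized at $q$ over $\pi(P)$. Viewing $\bar c$ as a functional on the ambient space, the identity shows its maximizers over $P$ are exactly the points of $P$ projecting to $q$, that is, the face $F = P \cap \pi^{-1}(q)$. This face lies on the line $q + \spans(\delta)$, so it is at most one-dimensional; it contains the two distinct vertices $\alpha$ and $\beta$, and a vertex of $P$ lying in a face is a vertex of that face. A one-dimensional face is a segment with exactly two vertices, forcing $F = \conv(\alpha, \beta)$, which is thus an edge of $P$.

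The main obstacle — really the only delicate point — is the final step of the converse: I must rule out that $F$ could be a longer segment containing $\alpha$ or $\beta$ in its relative interior. This is resolved by invoking the standard fact that a vertex of $P$ contained in a face $F$ is automatically a vertex of $F$, together with the observation that a bounded one-dimensional polytope has exactly two vertices. I would state both facts explicitly rather than leave them implicit.
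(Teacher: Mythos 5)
Your proposal is correct and follows essentially the same route as the paper: both directions rest on the dictionary between cost functions orthogonal to $\spans(\alpha-\beta)$ and cost functions on the projection, exactly as in the paper's proof. Your extra care in the converse (ruling out a longer segment via the facts that a vertex of $P$ in a face is a vertex of that face and that a segment has exactly two vertices) only makes explicit what the paper compresses into ``as $\alpha$ and $\beta$ were vertices of $P$.''
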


\begin{proof}
Denote $\delta = \alpha-\beta$, let $W$ be the orthogonal space to $\spans(\delta)$ and choose a basis of $W$ as $b_1,\dots, b_{d-1}$. 
Notice that $b_1,\dots, b_{d-1}, \delta$ is a basis of $\mathbb{R}^d$. 

Assume $\pi(\alpha)$ is a vertex of $\pi(P)$. 
Then there is a cost function $c'=(c_1,\dots,c_{d-1})$ in $W$ maximizing in $\pi(\alpha)$ over $\pi(P)$. 
Let $c=(c_1,\dots,c_{d-1}, 0)$. 
As $\langle c, v\rangle$ does not depend on $\spans(\delta)$ we have that $c$ maximize in $\left(\pi(\alpha)+\spans(\delta)\right)\cap P$ over $P$. 
As $\alpha$ and $\beta$ where vertices of $P$, and $\delta = \alpha-\beta$, we must have that $\left(\pi(\alpha)+\spans(\delta)\right)\cap P=\conv(\alpha, \beta)$, and hence $\alpha$, $\beta$ is an edge of $P$. 

The other way around assume that $\alpha$, $\beta$ is an edge of $P$. 
Then there is a cost function $c$ that maximize in $\conv(\alpha, \beta)$ over $P$. 
As $\langle c, \alpha\rangle=\langle c, \beta\rangle$ we have that $c\in W$. 
Therefore, with respect to the basis $b_1,\dots, b_{d-1}, \delta$ we have that $c= (c_1,\dots,c_{d-1}, 0)$ and hence $\langle c, v\rangle=\langle c, \pi(v)\rangle$ for all points $v\in \mathbb{R}^d$ and the result follows.
\end{proof}

That $\pi(\alpha)$ is in the convex hull of $\{\pi(x_i)\}_i$, that is, $\pi(\alpha)$ is not an vertex of the projection of $P$, can be restated as 
\begin{align*}
\begin{pmatrix}
\pi(v_1) & \dots & \pi(v_k)\\
1&\dots &1\\
\end{pmatrix}x &= 
\begin{pmatrix}
\pi(\alpha)\\
1
\end{pmatrix}\\
x&\geq \mathbf{0},
\end{align*}
where $v_1,\dots, v_k$ are all vertices of $P$ except $\alpha$ and $\beta$. 
Then adding in a dummy-row $\max_x\ \langle \mathbf{0}, x\rangle$ gives us a linear optimization problem. 
Due to the fact that $\langle \mathbf{0}, x\rangle=0$ for all $x$, this problem has a solution if and only if $\alpha$, $\beta$ is an edge of $P$. 

To solve, and check feasibility of, this type of problems there already exist highly optimized solvers. 
Solving such a problem does however scale worse with dimension than \cref{alg: verify edge num}, however it is still polynomial, with a fixed degree, in $d$. 
Solving the feasibility problem can however verify that $\alpha, \beta$ is not an edge of $P$ and is therefore preferable to only applying \cref{alg: verify edge num}. 
This is especially true since every time we give \cref{alg: verify edge num} a non-edge the algorithm will loop $I$ times, which in practice takes significantly longer than, for example, \texttt{scipy}'s \cite{scipy} \texttt{linprog} \cite{H17} algorithm. 

In \cref{fig: alg scheme} we present the general scheme of how these algorithms can be put together. 
In \cref{fig: computation time} we timed our algorithms and compared it to \texttt{polymake}. 
Missing entries indicate that the computations took more than 1 week. 
As is clear, our algorithm is quicker when the dimension becomes large, but is definitely slower for smaller examples. 
The biggest cost however, is verifying all edges, while checking for the rhombus criterion is very fast. 
Therefore, for all polytopes when we have a guarantee that it fulfills the rhombus criterion, the edge-structure can be efficiently computed. 
The code is available at \cite{github-cim}.

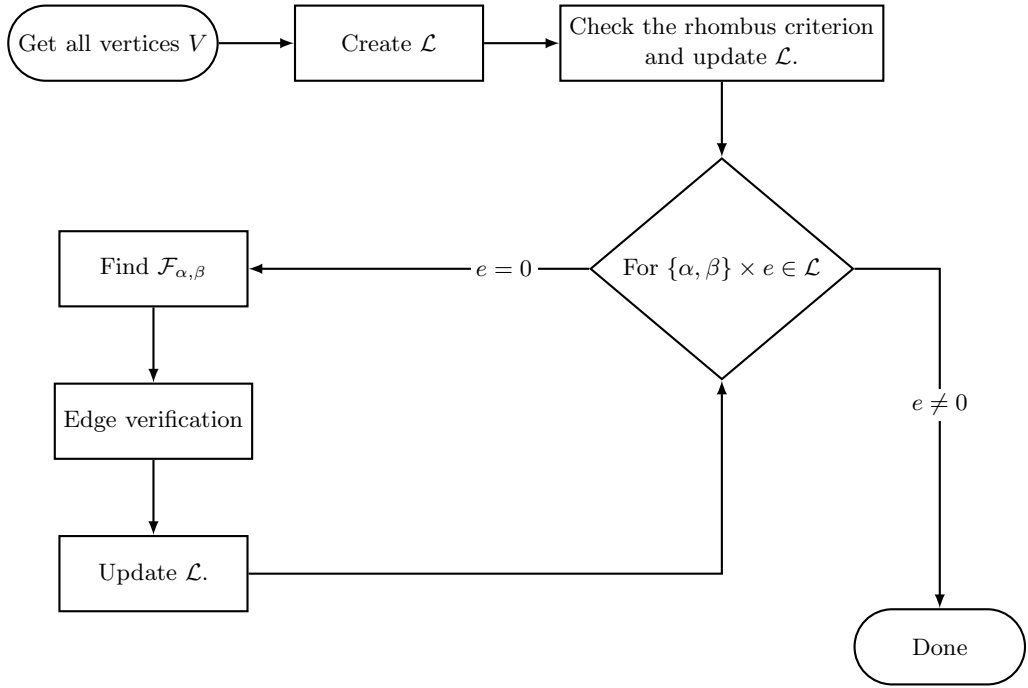
\begin{figure}
\[
\begin{tikzpicture}[font=\small,thick]
\node[draw,
    rounded rectangle,
    minimum width=2.5cm,
    minimum height=1cm] (V) {Get all vertices $V$};

\node[draw,
    align=center,
    right=of V,
    minimum width=2.5cm,
    minimum height=1cm
] (L) {Create $\mathcal{L}$};

\node[draw,
    align=center,
    right=of L,
    minimum width=3.5cm,
    minimum height=1cm
] (SQ) {Check the rhombus criterion\\ and update $\mathcal{L}$.};

\node[draw,
    diamond,
    below=of SQ,
    minimum width=3.5cm,
    inner sep=0] (For) {For $\{\alpha,\beta\}\times e\in\mathcal{L}$};
\node[left = of For,
    minimum width=2.5cm,
    minimum height=1cm] (dummy2) {};
\node[draw,
    align=center,
    left =of dummy2,
    minimum width=2.5cm,
    minimum height=1cm
    ] (F) { Find $\mathcal{F}_{\alpha, \beta}$};
\node[draw,
    below=of F,
    minimum width=2.5cm,
    minimum height=1cm] (VER) {Edge verification};
\node[below=of For,
    minimum width=2.5cm,
    minimum height=1cm] (dummy) {};
\node[draw,
    align=center,
    below=of VER,
    minimum width=2.5cm,
    minimum height=1cm] (U) {Update $\mathcal{L}$.};
\node[draw,
    rounded rectangle,
    below right=of dummy,
    minimum width=2.5cm,
    minimum height=1cm,] (D) {Done};

\draw[-latex] (V) edge (L)
    (L) edge (SQ)
    (SQ) edge (For);

\draw[-latex] (F.south) -| (VER);
\draw[-latex] (For.east) -| (D) node[pos=0.7,fill=white,inner sep=2pt]{$e\neq 0$};
\draw[-latex] (U) -| (For.south);
\draw[-latex] (For) -- (F) node[pos=0.25,fill=white,inner sep=2pt]{$e=0$};
\draw[-latex] (VER.south) -- (U);
\end{tikzpicture}
\]
\caption{Scheme for computing the edge structure of the discussed polytopes.}
\label{fig: alg scheme}
\end{figure}
\begin{table}
\centering
\begin{tabular}{c | r | r | r | r | r}
Polytope & Polymake & Total & $\mathcal{L}$ creation & Rhombus criterion & Verify edges\\\hline
$\CIM_5$& - & 127 233 & 664 & 259 & 126 310\\
$\CGP_5$& 623 & 598 & 87.9 & 1.92 & 506\\
$\CGP_4$& 1.2 & 2.68 &1.67&0.02&0.90\\
$B_7$ & 98 343 & 65 484& 256 & 116 & 65 112\\  
$B_6$ & 7.1 & 294 & 60 & 21 & 210 \\
$B_5$ & 1.3 & 6.5 & 1.7  & 0.13 & 4.7 \\
\end{tabular}
\caption{Computation time (seconds) for the full charateristic imset polytope, the Chordal graph polytope and the Birkhoff polytope. }
\label{fig: computation time}
\end{table}

Some additional optimizations are possible. 
The current way of verifying edges (\cref{alg: verify edge num} and the feasibility solver) do not require access to the entire list $\mathcal{L}$, only the vertices $V$. 
Therefore, when looping over all $\{\alpha,\beta\}\times e_{\alpha,\beta}\in\mathcal{L}$, parallelization is possible. 
As verifying a single edge is usually very light work, the risk of over-parallelization is significant. 
Therefore doing an additional step after checking for the rhombus criterion, splitting your list into several files (and filtering after only entries with $e=0$) can improve speed, RAM-issues, and make parallelization easier. 
Using these additional optimizations we were able to check that the rhombus criterion holds for $\CGP_6$.

\section{Acknowledgements}
Both authors were partially supported by the Wallenberg AI, Autonomous Systems and Software Program (WASP) funded by the Knut and Alice Wallenberg Foundation.
Svante Linusson was partially supported by Grant (No. 2018-05218) from Vetenskapsr\aa{}det (The Swedish Research Council).


\bibliographystyle{plain}
\bibliography{references}

\end{document}